\DeclareMathOperator{\argmin}{argmin}
\begin{document}


\setlength\parindent{0pt}

\newcommand{\Rho}{P}
\newcommand{\IN}{\mathbb{N}}
\newcommand{\IQ}{\mathbb{Q}}
\newcommand{\IZ}{\mathbb{Z}}
\newcommand{\IR}{\mathbb{R}}
\newcommand{\IC}{\mathbb{C}}
\newcommand{\Ima}{\mbox{Im}}
\newcommand{\dif}{\ \mbox{d}}
\newcommand{\cov}{\mbox{cov}}

\newcommand{\Lp}{\mathcal{L}}
\newcommand{\sI}{\mathcal{I}}
\newcommand{\sA}{\mathcal{A}}
\newcommand{\sB}{\mathcal{B}}
\newcommand{\sP}{\mathcal{P}}
\newcommand{\sE}{\mathcal{E}}
\newcommand{\sF}{\mathcal{F}}
\newcommand{\sL}{\mathcal{L}}
\newcommand{\sG}{\mathcal{G}}
\newcommand{\sH}{\mathcal{H}}
\newcommand{\sT}{\mathcal{T}}
\newcommand{\sV}{\mathcal{V}}

\renewcommand{\Re}{\mbox{Re }}
\renewcommand{\Im}{\mbox{Im }}

\newcommand{\reff}[1]{(\ref{#1})}

\newcommand{\IP}{\mathbb{P}}
\newcommand{\IE}{\mathbb{E}}
\newcommand{\Ii}{\mathbbm{1}}
\newcommand{\supp}{\mbox{supp}}
\newcommand{\Hess}{\mbox{Hess}}
\newcommand{\Var}{\mbox{Var}}
\newcommand{\sX}{\mathcal{X}}
\newcommand{\Kov}{\mbox{Kov}}
\newcommand{\Cov}{\mbox{Cov}}
\newcommand{\tr}{\mbox{tr}}
\newcommand{\gdw}{\Leftrightarrow}
\newcommand{\pto}{\overset{p}{\to}}
\newcommand{\fsto}{\overset{a.s.}{\to}}
\newcommand{\dto}{\overset{d}{\to}}
\newcommand{\lto}{\overset{L^2}{\to}}
\newcommand{\sD}{\mathcal{D}}
\newcommand{\iid}{\overset{\mbox{iid}}{\sim}}
\renewcommand{\l}{\ell}

\newcommand{\lima}{\mbox{l.i.m.}}

\renewcommand{\supp}{\text{supp}}

\newcommand{\err}{\mbox{err}}
\newcommand{\bias}{\mbox{bias}}

\newcommand{\norm}[1]{\left\lVert#1\right\rVert}

\newtheorem{theorem}{Theorem}[section]
\newtheorem{corollary}[theorem]{Corollary}
\newtheorem{definition}[theorem]{Definition}
\newtheorem{proposition}[theorem]{Proposition}
\newtheorem{lemma}[theorem]{Lemma}
\newtheorem{remark}[theorem]{Remark}
\newtheorem{example}[theorem]{Example}
\newtheorem{assumption}[theorem]{Assumption}

\newcommand{\note}[1]
{$^{(!)}$\marginpar[{\hfill\tiny{\sf{#1}}}]{\tiny{\sf{(!) #1}}}}


\title{Towards a general theory for non-linear locally stationary processes}
\author{Rainer Dahlhaus, Stefan Richter, Wei Biao Wu}
\maketitle

\begin{abstract}

   In this paper some general theory is presented for locally stationary processes based on the stationary approximation and the stationary derivative. Laws of large numbers, central limit theorems as well as deterministic and stochastic bias expansions are proved for processes obeying an expansion in terms of the stationary approximation and derivative. In addition it is shown that this applies to some general nonlinear non-stationary Markov-models. In addition the results are applied to derive the asymptotic properties of maximum likelihood estimates of parameter curves in such models.
\end{abstract}

\begin{center}
	Keywords: Non-stationary processes, derivative processes
\end{center}

\section{Introduction}

One of the challenges in statistics for stochastic processes is always to develop a general theory that goes beyond the investigation of specific models. An elegant example are stationary Gaussian time series or  linear time series which are identifiable from the covariance structure or the spectral density of the process. This leads to such powerful tools as the Whittle-likelihood and quite general asymptotic results on statistical inference. This linear theory has been extended to locally stationary processes (cf. \cite{dahlhaus1997}, \cite{dahlhaus2000},\cite{dahlhauspolonik2009}, for an overview see \cite{dahlhaus2012}, Chapter 5) leading again to a general framework in which problems such as bootstrap methods for locally stationary processes (cf. \cite{sergidespaparoditis2008},\cite{kreisspaparoditis2015}), testing problems (cf. \cite{sergidespaparoditis2009},\cite{preussEtAl2013})  long memory models (cf. \cite{palmaolea2010}, \cite{roueffvonsachs2011}) or dynamic non-stationary factor models (cf. \cite{mottaEtAl2011},\cite{eichlerEtAl2011}) can be considered.

In the nonlinear case the situation is more challenging since there is no natural framework similar to the linear Gaussian case. A general theory, however, has been introduced in \cite{wu2005} for Bernoulli shift processes in combination with the functional dependence measure - an important example being Markov processes (and also linear processes). By using this calculus it is possible to transfer a large number of results from the iid case to such processes (for instance M-estimation \cite{wu2007}, empirical process theory \cite{wu2008}, high-dimensional covariance matrix estimation \cite{chen2013} - see also the overview in \cite{wu2011}).

The functional dependence measure for Bernoulli shift processes can also be extended to locally stationary processes (cf. \cite{WuAndZhou2011}) leading to a general framework for non-linear locally stationary processes. Within this framework for example \cite{zhouwu2009} and \cite{WeichiWuAndZhou2017} discuss quantile regression, \cite{Zhou2014a} inference for weighted V-statistics and \cite{Zhou2014b} nonparametric regression for locally stationary processes.

Another general concept for locally stationary processes is the use of stationary approximations and derivative processes introduced in the context of time varying ARCH-processes in \cite{suhasini2006}. The concept has been investigated further in \cite{suhasini2006b} in the context of random coefficient models. \cite{vogt2012} uses the stationary approximation for a definition of local stationarity. The concept has also been used for diffusion processes in \cite{KooLinton2012}.

In this paper the general theory for nonlinear locally stationary processes will be developed further. Our contribution is twofold: First we consider in Sections 2 and 3 processes which admit an expansion in terms of the stationary approximation and the derivative process, and prove several asymptotic results for such processes. We then consider in Section 4 a general Markov-structured locally stationary process and show that it fulfills such an expansion. As a consequence all results of Sections 2 and 3 immediately can be applied for such processes. In addition we use in Section 5 the stationary approximation and the derivative process to derive the asymptotic theory for maximum likelihood estimates.

More precisely we use in Section \ref{DP_section35} the stationary approximation to prove global and local laws of large numbers and a central limit theorem which hold under minimal moment assumptions on the process. The proofs make use of the asymptotic theory for sums of stationary sequences. In Section \ref{DP_section3} we use the differential calculus connected to derivative processes to derive deterministic and stochastic bias expansions of localized sums. In addition we show accurate error estimations of the Wigner-Ville spectrum and the distribution function of the process.

In Section \ref{DP_section2}, we consider a class of Markov processes and prove that they satisfy the expansion from Section \ref{DP_section3} in terms of the stationary approximation and the derivative process. A difficult part of the proof is the existence of a continuous modification of the stationary approximation and the proof that the derivative process can be obtained as the solution of a functional equation. These results are the prerequisite to apply the functional calculus used in sections 2 and 3 and in section 5 thereafter. We also prove that the functional dependence measure of such processes decays exponentially. Locally stationary Markov processes have recently also been investigated with different type of results in \cite{truquet2016}.

In Section \ref{DP_section4}, we use these results to investigate nonparametric maximum likelihood estimation of parameter curves in locally stationary processes. Concluding remarks are given in Section \ref{DP_section5}. Some proofs are postponed to the Supplementary Material \ref{suppA}.

\section{General asymptotic results for locally stationary processes}

\label{DP_section35}

In this and the next section we restrict ourselves to the basic idea of local stationarity, namely that the nonstationary process $X_{t,n}$ can be approximated locally by a stationary process $\tilde X_t(u)$ in some neighborhood of $u$, that is for those $t$ where $|t/n -u|$ is small. An even better approximation can usually be achieved by using the derivative process $\frac {\partial \tilde X_t(u)} {\partial u}$ leading heuristically to
\begin{equation} \label{}
X_{t,n} \approx \tilde X_t \big(\frac {t} {n}\big) \approx \tilde X_t(u_0) + \Big(\frac {t} {n} - u_0\Big) \, \frac {\partial \tilde X_t(u)} {\partial u}_{\big|u=u_0} + \; \mbox{remainder}\label{stat_expansions}
\end{equation}
(or higher order approximations by using higher order derivative processes). An example are the processes defined below in (\ref{DP_rek_1}) and (\ref{DP_rek_2}) which are investigated in detail in Section~\ref{DP_section2}. In this and the next section we explore what kind of results can be obtained just based on this framework where in this section we just use the stochastic approximation $\tilde X_t(u)$ (to derive law of large numbers and central limit theorems) while in the next section we use in addition the derivative process $\frac {\partial \tilde X_t(u)} {\partial u}$ (to derive deterministic and stochastic bias approximations among other results). For a better understanding we make some comments about the situation in advance:\\[6pt]
1) In Assumption~\ref{DP_ass_general}(S2) and (S3) we assume that $\tilde X_t(u)$ is almost surely continuous or continuously differentiable in $u$ respectively. This is a strong assumption since $\tilde X_t(u)$ is initially defined in most cases pointwise in $u$ as in (\ref{DP_rek_2}). The existence of continuous or continuously differentiable versions of $\tilde X_t(u)$ will be proved for (\ref{DP_rek_2}) in Section~\ref{DP_section2}.\\
We mention that almost sure differentiability could be replaced by the weaker assumption of differentiability in $L^q$ (see Remark \ref{remark_differentiability} and Proposition \ref{DP_prop_biasexp}(b)). The stronger assumption of almost sure differentiability leads to some weaker assumptions in applications (see Remark \ref{remark_biasdet}) and has the advantage that several results can be obtained in a more straightforward way and that the presentation is simpler.

%
2) It is obvious that for asymptotic results some form of mixing is needed in addition which we require in Assumption~\ref{DP_ass_general2}. We distinguish between mixing conditions on $X_{t,n}$ and the stationary approximation $\tilde X_t(u)$ since our aim is to show that most of the results can be obtained by only posing mixing assumptions on $\tilde X_t(u)$ which on the other side leads to an additional approximation error in the results and subsequently in come cases to stronger assumptions.\\[6pt]
3) One of the nice features of Assumptions~\ref{DP_ass_general} and \ref{DP_ass_general2} is that, provided they hold for the process $X_{t,n}$, they then automatically also hold for a large class of functionals $g(X_{t,n},...,X_{t-r+1,n})$ (under modified moment assumptions). Thus all results for the process $X_{t,n}$ immediately transfer to $g(X_{t,n},...,X_{t-r+1,n})$. This is stated in Proposition~\ref{DP_lemma_preservation} below.

As usual we are working in the infill asymptotic framework with rescaled time $t/n \in [0,1]$ where $n$ denotes the number of observations. We now assume:

\begin{assumption}[Stationary approximation]\label{DP_ass_general} Let $q>0$ and $\|W\|_q := (\IE |W|^q)^{1/q}$. Let $X_{t,n}$, $t=1,...,n$ be a triangular array of stochastic processes. For each $u\in [0,1]$, let $\tilde X_t(u)$ be a stationary and ergodic process such that the following holds.
	\begin{enumerate}[label=(S\arabic*),ref=(S\arabic*)]
		\item \label{DP_ass_general_s1} $\sup_{u \in [0,1]}\|\tilde X_t(u)\|_q < \infty$. There exists $1 \ge \alpha > 0, C_B > 0$ such that uniformly in $t = 1,...,n$ and $u,v \in [0,1]$,
		\begin{equation}
			\big\|\tilde X_t(u) - \tilde X_t(v)\big\|_q \le C_B |u-v|^{\alpha}, \quad\quad \big\|X_{t,n} - \tilde X_t(\frac{t}{n})\big\|_q \le C_B n^{-\alpha}.\label{DP_ass_general_eq1}
		\end{equation}
		\item \label{DP_ass_general_s2} $u \mapsto \tilde X_t(u)$ is a.s. continuous for all $t \in \IZ$ and $\| \sup_{u\in[0,1]}|\tilde X_t(u)|\ \|_q < \infty$.
		\item \label{DP_ass_general_s3} $\alpha = 1$ and $u \mapsto \tilde X_t(u)$ is a.s. continuously differentiable for all $t \in \IZ$ and\\
       $\| \sup_{u\in[0,1]}|\partial_u \tilde X_t(u)|\ \|_q < \infty$.
	\end{enumerate}
\end{assumption}
\medskip

\ref{DP_ass_general_s1} allows us to replace $X_{t,n}$ by the stationary approximation $\tilde X_t(u)$ with rate $|t/n-u|^{\alpha} + n^{-\alpha}$. In many models and statistical applications, $\alpha = 1$. 
In Section \ref{DP_section2} (cf. Corollary \ref{DP_cor_ass_preservation})
we will show that Assumption \ref{DP_ass_general} is fulfilled for example for processes $X_{t,n}$ defined by the recursion
\begin{equation}
	X_{t,n} = G_{\varepsilon_t}\big(X_{t-1,n}, ..., X_{t-p,n},\frac{t}{n}\vee 0\big), \quad t \le n,
	\label{DP_rek_1}
\end{equation}
where $(\varepsilon_t)_{t\in\IZ}$ are i.i.d. random variables, $G:\IR \times \IR^p \times [0,1] \to \IR$ and $a \vee b := \max\{a,b\}$. Here, $\varepsilon_t$ take the role of i.i.d. innovations which enter the process in the $t$-th step. For $u \in [0,1]$, the \textit{stationary approximation}  $\tilde X_t(u)$, $t\in\IZ$, in this case is given by the recursion
\begin{equation}
	\tilde X_t(u) = G_{\varepsilon_t}\big(\tilde X_{t-1}(u),...,\tilde X_{t-p}(u), u\big), \quad t\in\IZ,\label{DP_rek_2}
\end{equation}
(or an a.s. continuous modification). We first give some examples which are covered by our results. These include in particular several classical parametric time series models where the constant parameters have been replaced by time-dependent parameter curves.

\begin{example}\label{DP_example_model}
	\begin{enumerate}
		\item[(i)] the tvAR($p$) process: Given parameter curves $a_i, \sigma:[0,1] \to \IR$ $(i = 1,...,p$),
		\[
			X_{t,n} = a_1\big(\frac{t}{n}\big)X_{t-1,n} + ... + a_p\big(\frac{t}{n}\big)X_{t-p,n} + \sigma\big(\frac{t}{n}\big)\varepsilon_t.
		\]
		\item[(ii)] the tvARCH($p$) process (cf. \cite{suhasini2006}): Given parameter curves $a_i:[0,1]\to \IR$ ($i = 0,...,p$),
		\[
			X_{t,n} = \big( a_0\big(\frac{t}{n}\big) + a_{1}\big(\frac{t}{n}\big)X_{t-1,n}^2 + ... + a_p\big(\frac{t}{n}\big) X_{t-p,n}^2\big)^{1/2}\varepsilon_t.
		\]
		\item[(iii)] the tvTAR($1$) process (cf. \cite{zhouwu2009}): Given parameter curves $a_1,a_2:[0,1]\to\IR$, define
		\[
			X_{t,n} = a_1\big(\frac{t}{n}\big)X_{t-1,n}^{+} +a_2\big(\frac{t}{n}\big)X_{t-1,n}^{-} + \varepsilon_t,
		\]
		where $x^{+} := \max\{x,0\}$ and $x^{-} := \max\{-x,0\}$.
		\item[(iv)] the time-varying random coefficient model (cf. \cite{suhasini2006b}): With some parameter functions $a_i(\cdot)$, $i = 0,...,p$,
		\[
			X_{t,n} = a_0(\varepsilon_t, \frac{t}{n}) + a_1(\varepsilon_t, \frac{t}{n}) X_{t-1,n} + ... + a_p(\varepsilon_t, \frac{t}{n}) X_{t-p,n}.
		\]
	\end{enumerate}
\end{example}

We now prove laws of large numbers and a central limit theorem. To specify the necessary mixing conditions we use the uniform functional dependence measure (cf. \cite{wu2013}). Let $\varepsilon_t$, $t\in\IZ$ be a sequence of i.i.d. random variables. For $t \ge 0$, let $\sF_t := (\varepsilon_t,\varepsilon_{t-1},...)$ and $\sF_t^{*(t-k)} := (\varepsilon_t,...,\varepsilon_{t-k+1},\varepsilon_{t-k}^{*},\varepsilon_{t-k-1},\varepsilon_{t-k-2},...)$, where $\varepsilon_{t-k}^{*}$ is a random variable which has the same distribution as $\varepsilon_{1}$ and is independent of all $\varepsilon_t$, $t\in\IZ$. For a process $W_t = H_t(\sF_t) \in L^q$ with deterministic $H_t:\IR^{\IN}\to\IR$ define $W_t^{*(t-k)} := H_t(\sF_t^{*(t-k)})$ and the uniform functional dependence measure
\begin{equation}
	\delta^{W}_q(k) := \sup_{t\in\IZ}\|W_t - W_t^{*(t-k)}\|_q.\label{DP_def_dependence}
\end{equation}
If $Y_t$ is stationary, \reff{DP_def_dependence} reduces to the form $\delta^Y_q(k) = \|Y_k - Y_k^{*0}\|_q$.

\begin{assumption}[Dependence measure]\label{DP_ass_general2} For some $q > 0$, assume that
\begin{enumerate}[label=(M\arabic*),ref=(M\arabic*)]
	\item\label{DP_ass_general2_m1} (dependence measure of the stat. approximation) for each $u \in [0,1]$, there exists a measurable function $H(u,\cdot)$ such that $\tilde X_t(u) = H(u,\sF_t)$ and $\delta^{\tilde X}_q(k) := \sup_{u\in [0,1]}\delta_q^{\tilde X(u)}(k)$ fulfills $\Delta_{0,q}^{\tilde X} := \sum_{k=0}^{\infty}\delta_q^{\tilde X}(k) < \infty$.
	\item\label{DP_ass_general2_m2} (dependence measure of the process) for each $t,n\in\IN$, there exists a measurable function $H_{t,n}$ such that $X_{t,n} = H_{t,n}(\sF_t)$ with $\Delta_{0,q}^{X} := \sum_{k=0}^{\infty}\sup_{n\in\IN}\delta_q^{X_{\cdot,n}}(k) < \infty$.
	\item\label{DP_ass_general2_m3} $\partial_u \tilde X_t(u) = \partial_u H(u,\sF_t)$ and $\delta^{\partial \tilde X}_q(k) := \sup_{u \in [0,1]} \delta_q^{\partial_u \tilde X(u)}(k)$ is absolutely summable in the sense that $\Delta_{0,q}^{\partial \tilde X} := \sum_{k=0}^{\infty}\delta_q^{\partial \tilde X}(k) < \infty$.
	\end{enumerate}
\end{assumption}

In Section \ref{DP_section2} we will show that these assumptions are also fulfilled for models which obey \reff{DP_rek_1} and \reff{DP_rek_2}. Note that \ref{DP_ass_general2_m1} is a mixing condition on $\tilde X_t(u)$ (which in most cases is sufficient for asymptotic results) while \ref{DP_ass_general2_m2} is a mixing condition on $X_{t,n}$. In our results, \ref{DP_ass_general2_m1} or \ref{DP_ass_general2_m2} are assumed alternatively. In general our goal is to use mainly assumptions on the stationary approximations, i.e. to use \ref{DP_ass_general2_m1} instead of \ref{DP_ass_general2_m2}. This also allows $X_{t,n}$ to have a structure which is different from $H_{t,n}(\sF_t)$, for instance contamination with some random noise which is decreasing in $n$ and is not produced by the innovations $\varepsilon_i$. Note that posing only \ref{DP_ass_general2_m1} forces us to replace $X_{t,n}$ in the proofs by its stationary approximation which naturally leads to an approximation error $n^{-\alpha}$. In many practical cases, we have $\alpha = 1$. It can be seen in our results that this implies negligibility of this error.

For the model $\tilde X_t(u) = H(u,\cdot)$, $X_{t,n} = H_{t,n}(\sF_t)$ with \ref{DP_ass_general_s1}, then \ref{DP_ass_general2_m2} implies \ref{DP_ass_general2_m1} while the reverse implication is false. This can be seen as follows: For arbitrary $u \in [0,1]$ choose $t_n \in \{1,...,n\}$ with $|u-t_n/n| \le n^{-\alpha}$. Then
\begin{eqnarray*}
\delta_q^{\tilde X(u)}(k) &=& \|\tilde X_t(u) - \tilde X_t(u)^{*(t-k)}\|_q = \|\tilde X_t(t_n/n) - \tilde X_t(t_n/n)^{*(t_n-k)}\|_q + O(n^{-\alpha})\\
&=& \|X_{t_n,n} - X_{t_n,n}^{*(t_n-k)}\|_q + O(n^{-\alpha}) \le \sup_{n\in\IN}\delta_q^{X_{\cdot,n}}(k) + O(n^{-\alpha}),
\end{eqnarray*}
which by $n\to\infty$ implies $\ref{DP_ass_general2_m1}$.
If for instance for each fixed $u \in [0,1]$, $\tilde X_t(u)$ is a simple AR(1) process $\tilde X_t(u) =\sum_{k=0}^{\infty}\alpha(u)^k \varepsilon_{t-k}$ with Lipschitz continuous $\alpha:[0,1] \to (-1,1)$ and $\|\varepsilon_i\|_q < \infty$, then it is easy to see that $X_{t,n} = \tilde X_t(t/n) + (\varepsilon_t + ... + \varepsilon_{t-n})n^{-2}$ satisfies \ref{DP_ass_general_s1} with $q = 1$ and $\alpha = 1$, but $\delta_{q}^{X_{\cdot,n}}(k) = \sup_{t}|\theta(t/n)^k + n^{-2}|$ is not absolutely summable. There are more counterexamples, see the Supplementary Material \ref{suppB}.

\textbf{Invariance property of the assumptions with respect to transformations: } For some fixed $r \in \IN$ define $Z_{t,n} := (X_{t,n},...,X_{t-r+1,n})'$ and $\tilde Z_t(u) := (\tilde X_{t}(u),...,\tilde X_{t-r+1}(u))'$. We prove that $g(Z_{t,n})$ also fulfills Assumptions \ref{DP_ass_general} and \ref{DP_ass_general2} for the following class of functions $g$.


\begin{definition}[The class $\sL_r(M,C)$]\label{DP_hoelder_def} We say that a function $g:\IR^r \to \IR$ is in the class $\sL_r(M,C)$ if $M \ge 0$ and
	\begin{equation}
		\sup_{y\not= y'}\frac{|g(y) - g(y')|}{|y-y'|_1\cdot (1+|y|_1^{M} + |y'|_1^{M})} \le C,\label{DP_hoelder_prop_polynom}
	\end{equation}
	where $|y|_1 := \sum_{i=1}^{r}|y_i|$.
\end{definition}

\begin{proposition}[Invariance property of the stationary approximation]\label{DP_lemma_preservation}
	Assume that $g \in \sL_r(M,C)$ and $q > 0$.
	\begin{enumerate}
		\item[(i)]
	If Assumption \ref{DP_ass_general}\ref{DP_ass_general_s1},\ref{DP_ass_general_s2} and Assumption \ref{DP_ass_general2}\ref{DP_ass_general2_m1},\ref{DP_ass_general2_m2} are fulfilled for the process $X_{t,n}$ with $\tilde q = q\cdot (M+1)$ and $1 \ge  \alpha > 0$, then the same assumptions are fulfilled for the process $g(Z_{t,n})$ with $q$ and $\alpha$.
	\end{enumerate}
	Now let $g$ in addition be continuously differentiable where the partial derivatives $\partial_j g$, $j = 1,...,r$ fulfill $\partial_j g \in \sL_r(M-1,C')$ with some $M \ge 1$.
	\begin{enumerate}
		\item[(ii)] If Assumption \ref{DP_ass_general}\ref{DP_ass_general_s3} and \ref{DP_ass_general2}\ref{DP_ass_general2_m1},\ref{DP_ass_general2_m3} are fulfilled for the process $X_{t,n}$ with $\tilde q = q\cdot (M+1)$, then the same assumptions are fulfilled for the process $g(Z_{t,n})$ with $q$.
	\end{enumerate}
\end{proposition}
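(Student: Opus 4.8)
\emph{Proof proposal.}

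The plan is to base everything on the single pointwise bound coming from $g\in\sL_r(M,C)$,
\[
|g(y)-g(y')|\le C\,|y-y'|_1\,\big(1+|y|_1^{M}+|y'|_1^{M}\big),
\]
combined with Hölder's inequality in the form $\|AB\|_q\le\|A\|_{\tilde q}\,\|B\|_{\tilde q/M}$, which is available precisely because $\tilde q=q(M+1)$ forces $\tfrac1q=\tfrac1{\tilde q}+\tfrac M{\tilde q}$. (For $0<q<1$ the quantity $\|\cdot\|_q$ is only a quasi-norm, but $\IE|A+B|^q\le\IE|A|^q+\IE|B|^q$ and the corresponding Hölder inequality for exponents $\ge q$ make all the estimates below go through with harmless numerical constants.) Two bookkeeping facts will be used throughout: $|\tilde Z_t(u)|_1=\sum_{j=0}^{r-1}|\tilde X_{t-j}(u)|$ and likewise for $Z_{t,n}$; and since replacing $\varepsilon_{t-k}$ by $\varepsilon_{t-k}^{*}$ preserves the law of the innovations, $\big\||\tilde Z_t(u)^{*(t-k)}|_1\big\|_{\tilde q}=\big\||\tilde Z_t(u)|_1\big\|_{\tilde q}$ (and similarly for $Z_{t,n}$ and for $\partial_u\tilde X_t(u)$), while $\varepsilon_{t-k}$ enters $\sF_{t-j}$ only when $k\ge j$, at lag $k-j$, so $\|\tilde X_{t-j}(u)-\tilde X_{t-j}(u)^{*(t-k)}\|_{\tilde q}\le\delta^{\tilde X}_{\tilde q}(k-j)$ (and $=0$ if $k<j$); summing such a window of length $r$ over all $k$ costs only a factor $r$.

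For \textbf{part (i)} I would first record that \ref{DP_ass_general_s1} at level $\tilde q$ makes $\sup_u\big\||\tilde Z_t(u)|_1\big\|_{\tilde q}$ and $\sup_{t,n}\big\||Z_{t,n}|_1\big\|_{\tilde q}$ finite (for the latter, bound $\|X_{t-j,n}\|_{\tilde q}\le\|\tilde X_{t-j}((t-j)/n)\|_{\tilde q}+C_Bn^{-\alpha}$). Then the three bounds of \reff{DP_ass_general_eq1} for $g(Z_{t,n})$ follow by applying the pointwise inequality with $(y,y')$ equal to $(\tilde Z_t(u),\tilde Z_t(v))$ and to $(Z_{t,n},\tilde Z_t(t/n))$, taking $\|\cdot\|_q$, and peeling off the polynomial factor by Hölder: the $|y-y'|_1$–part yields $rC_B|u-v|^{\alpha}$ and $O(n^{-\alpha})$ respectively (for the second, also moving $\tilde X_{t-j}((t-j)/n)$ to $\tilde X_{t-j}(t/n)$ at cost $C_B(r/n)^{\alpha}$), while the polynomial part is bounded by the first remark. \ref{DP_ass_general_s2} for $g(Z_{t,n})$ is immediate from continuity of $g$ and $\sup_u|g(\tilde Z_t(u))|\le|g(0)|+C(\sup_u|\tilde Z_t(u)|_1)(1+(\sup_u|\tilde Z_t(u)|_1)^{M})\in L^q$. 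For \ref{DP_ass_general2_m1} one writes $g(\tilde Z_t(u))=\tilde H(u,\sF_t)$ (each $\tilde X_{t-j}(u)=H(u,\sF_{t-j})$ being $\sF_t$–measurable) and bounds $\delta^{g(\tilde Z(u))}_q(k)$ via the pointwise inequality at $(y,y')=(\tilde Z_t(u),\tilde Z_t(u)^{*(t-k)})$: a uniformly $L^{\tilde q/M}$–bounded polynomial factor times a difference factor at most $\sum_j\delta^{\tilde X}_{\tilde q}(k-j)$ (sum over the $\le r$ relevant lags near $k$), so that $\Delta^{g(\tilde Z)}_{0,q}\le C'r\,\Delta^{\tilde X}_{0,\tilde q}<\infty$. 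The verification of \ref{DP_ass_general2_m2} for $g(Z_{t,n})$ is word for word the same with $H_{t,n}(\sF_t)$ in place of the stationary approximation, using $\sup_{t,n}\big\||Z_{t,n}|_1\big\|_{\tilde q}<\infty$.

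For \textbf{part (ii)}, since $g\in C^1$ and each $u\mapsto\tilde X_{t-j+1}(u)$ is a.s.\ $C^1$, the chain rule gives $u\mapsto g(\tilde Z_t(u))$ a.s.\ $C^1$ with $\partial_u g(\tilde Z_t(u))=\sum_{j=1}^r\partial_j g(\tilde Z_t(u))\,\partial_u\tilde X_{t-j+1}(u)$, and $\alpha=1$ carries over. Using $\partial_j g\in\sL_r(M-1,C')$, I bound $|\partial_j g(\tilde Z_t(u))|\le|\partial_j g(0)|+C'|\tilde Z_t(u)|_1(1+|\tilde Z_t(u)|_1^{M-1})$, so $\sup_u|\partial_u g(\tilde Z_t(u))|$ is dominated by a sum of products (a polynomial of degree $\le M$ in $\sup_u|\tilde Z_t(u)|_1$) $\times\ \sup_u|\partial_u\tilde X_{t-j+1}(u)|$, which lies in $L^q$ by Hölder with the split $\tfrac1q=\tfrac M{\tilde q}+\tfrac1{\tilde q}$ together with \ref{DP_ass_general_s2} and \ref{DP_ass_general_s3} at level $\tilde q$; this is \ref{DP_ass_general_s3} for $g(Z_{t,n})$. \ref{DP_ass_general2_m1} is already covered by part (i). For \ref{DP_ass_general2_m3} I apply the Leibniz splitting $AB-A'B'=(A-A')B+A'(B-B')$ with $A=\partial_j g(\tilde Z_t(u))$, $B=\partial_u\tilde X_{t-j+1}(u)$ and their starred versions: the first piece is handled as the \ref{DP_ass_general2_m1}–bound of part (i) but carrying one extra $L^{\tilde q}$–bounded factor $B$, contributing $\lesssim\sum_j\delta^{\tilde X}_{\tilde q}(k-j)$; the second uses $\|\partial_u\tilde X_{t-j+1}(u)-\partial_u\tilde X_{t-j+1}(u)^{*(t-k)}\|_{\tilde q}\le\delta^{\partial\tilde X}_{\tilde q}(k-j+1)$ against a uniformly $L^{\tilde q/M}$–bounded factor $A'$, contributing $\lesssim\sum_j\delta^{\partial\tilde X}_{\tilde q}(k-j+1)$. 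Summing over $k$ gives $\Delta^{\partial(g(\tilde Z))}_{0,q}\le C''r\,(\Delta^{\tilde X}_{0,\tilde q}+\Delta^{\partial\tilde X}_{0,\tilde q})<\infty$.

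The part I expect to be the real (if still routine) work is the exponent bookkeeping in part (ii): one must check that every term produced by the chain rule and the Leibniz splitting genuinely factors into a difference factor carrying a dependence measure and polynomial/derivative factors whose integrability exponents add up to $1/q$ — which is exactly where $\tilde q=q(M+1)$, and the degree drop $\partial_j g\in\sL_r(M-1,\cdot)$ that frees one exponent for the extra $\partial_u\tilde X$ factor, are needed — and where the lag shifts in the dependence measures over a window of length $r$ have to be kept straight. Everything else reduces to the single Hölder estimate above.
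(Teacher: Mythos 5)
Your proposal is correct and follows exactly the route the paper has in mind: the paper omits the proof with the remark that it "is immediate from Hoelder's inequality," and your argument is precisely that Hölder estimate (split $1/q = M/\tilde q + 1/\tilde q$ from $\tilde q = q(M+1)$, the starred-coupling bound for the dependence measures with the lag window of length $r$, and in part (ii) the chain rule plus the Leibniz splitting, where the degree drop to $\sL_r(M-1,C')$ frees the exponent slot for the $\partial_u\tilde X$ factor — matching the paper's own comment on why that condition is only needed for \ref{DP_ass_general2}\ref{DP_ass_general2_m3}). No gaps; the quasi-norm caveat for $q<1$ and the shift from $\tilde X_{t-j}((t-j)/n)$ to $\tilde X_{t-j}(t/n)$ are handled correctly.
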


The proof is immediate from Hoelder's inequality and therefore omitted. Let us mention that the condition $\partial_j g \in \sL_r(M-1,C')$ is only needed to prove the mixing condition \ref{DP_ass_general2}\ref{DP_ass_general2_m3} since $\|\sup_{u \in [0,1]}|\partial_u g(\tilde Z_0(u))| \|_q < \infty$ can be shown by using $|\partial_j g(z)| \le C(1 + |z|_1^{M})$. 

With slight changes, the statements of Proposition \ref{DP_lemma_preservation} can be extended to Hoelder continuous functions which fulfill
\begin{equation}
		\sup_{y\not= y'}\frac{|g(y) - g(y')|}{|y-y'|_1^{\beta}\cdot (1+|y|_1^{M} + |y'|_1^{M})} \le C,\label{DP_hoelder_prop_polynom_2}
	\end{equation}
with some $1 \ge \beta > 0$.

In view of the above proposition, \underline{all} theorems formulated for $X_{t,n}$ in this and the next section are, under appropriate moment conditions, also valid for transformations $g(Z_{t,n})$ of $X_{t,n}$. An important example  is the covariance operator $g:\IR^{r}\to \IR, g(x_1,...,x_r) = x_1 x_r$ which leads to $g(Z_{t,n}) = X_{t,n}X_{t-r+1,n}$ and fulfills $g \in \sL_{r}(1,1)$.


\textbf{Local and global laws of large numbers: } The smoothness of $X_{t,n}$ in time direction can be used to obtain laws of large numbers by only assuming the existence of the first moment of $X_{t,n}$. The key step of the proof is to split the sum over $X_{t,n}$ into sums over smaller ranges of $t$ where $X_{t,n}$ can be approximated by stationary processes. We will also provide results for localized sums. Usually, we will need the following
\begin{assumption}[Localizing kernel]\label{DP_ass_kernel}
	$K:\IR\to\IR$ is a bounded function, i.e. with some $|K|_{\infty} > 0$ it holds that $\sup_{x\in\IR}|K(x)| \le |K|_{\infty}$, and of bounded variation $B_K$ with compact support $[-\frac{1}{2},\frac{1}{2}]$ satisfying $\int K\dif x = 1$. Let $K_b(x) := \frac{1}{b}K(\frac{x}{b})$.
\end{assumption}
The first part of the following theorem can be seen as a generalization of the ergodic theorem to non-stationary processes, while the second part provides uniform convergence rates if more than the first moment is available.

\begin{theorem}[Law of large numbers]\label{DP_prop_ergodic}
    Let $q = 1$ in (i),(ii) and $q > 1$ in (iii). Suppose that Assumption \ref{DP_ass_general}\ref{DP_ass_general_s1} holds with some $1\ge \alpha > 0$ and that Assumption \ref{DP_ass_kernel} holds. Then we obtain for the process $X_{t,n}$ (or alternatively for the process $g (Z_{t,n})$ if Assumption \ref{DP_ass_general}\ref{DP_ass_general_s1} is fulfilled for the process $X_{t,n}$ with $\tilde q = q\cdot (M+1)$ instead of $q$) the following results:
	\begin{enumerate}
		\item[(i)]
	\[
		\frac{1}{n}\sum_{t=1}^{n}X_{t,n} \to \int_0^{1} \IE \tilde{X}_0(u) \dif u \quad \mbox{in } L^1
	\]
       \item[(ii)]
	For each $u\in (0,1)$
	\[
		\frac{1}{nb}\sum_{t=1}^{n}K\Big(\frac{t/n-u}{b}\Big)\cdot X_{t,n} \to \IE \tilde{X}_0(u) \quad \mbox{in } L^1
	\]
	as $n\to\infty$, $nb \to \infty$ and $b = b_n \to 0$.
	\item[(iii)]Additionally, suppose that Assumption \ref{DP_ass_general2}\ref{DP_ass_general2_m1} holds with $q, \alpha$. Then
	\begin{align}
			& \Big\|\sup_{u\in[0,1]}\big|\frac{1}{nb}\sum_{t=1}^{n}K\Big(\frac{t/n-u}{b}\Big)\cdot \big(X_{t,n} - \IE X_{t,n}\big)\big|\Big\|_q\nonumber\\
			&\quad\quad\le \frac{B_K p}{(q-1)^2}\Delta_{q,0}^{\tilde X}\cdot n^{1/q-1}b^{-1} + 2C_B B_K \cdot n^{-\alpha}b^{-1}.\label{DP_prop_ergodic_eq1}
		\end{align}
		If $q > 2$, then there exist constants $C_1,C_2$ not depending on $n,b$ such that for all $x > 0$:
		\begin{align}
			& \IP\Big(\sup_{u\in[0,1]}\big|\frac{1}{nb}\sum_{t=1}^{n}K\Big(\frac{t/n-u}{b}\Big)\cdot \big(X_{t,n} - \IE X_{t,n}\big)\big| > x\Big)\nonumber\\
			&\quad\quad\le \frac{2C_1 (B_K \Delta_{0,q}^{\tilde X})^q n^{1-q}b^{-q}}{(x/2)^q} + 8G_{1-2/q}\Big(\frac{C_2 n^{1/2}bx}{2B_K \Delta_{0,q}^{\tilde X}}\Big) + \frac{(2 B_K C_B)^q}{(x/2)^q}\cdot (n^{-\alpha}b^{-1})^q, \label{DP_prop_ergodic_eq2}
		\end{align}
		with positive constants $C_1,C_2$ not depending on $n,b$ and $G_{\gamma}(y) := \sum_{j=1}^{\infty}e^{-j^{\gamma} y^2}$ a Gaussian-like tail function. $G_{\gamma}(y)$ can be replaced by $\exp(-c y^2)$ for some $c > 0$.
	\end{enumerate}
\end{theorem}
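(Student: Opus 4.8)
The common mechanism behind all three parts is to replace the nonstationary $X_{t,n}$ by the stationary approximation $\tilde X_t(\cdot)$ via \ref{DP_ass_general_s1}, and then reduce the resulting stationary‑type sums either to the $L^1$ ergodic theorem (parts (i),(ii)) or, after a summation‑by‑parts step exploiting the bounded variation of $K$, to a maximal inequality for partial sums (part (iii)). For (i), fix a block length $\ell_n\to\infty$ with $\ell_n/n\to0$ (e.g. $\ell_n=\lfloor\sqrt n\rfloor$), split $\{1,\dots,n\}$ into consecutive blocks $B_1,\dots,B_{N_n}$ and in each $B_j$ a point $u_j$ with $|t/n-u_j|\le\ell_n/n$ for all $t\in B_j$. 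By the two bounds in \reff{DP_ass_general_eq1}, $\|X_{t,n}-\tilde X_t(u_j)\|_1\le C_Bn^{-\alpha}+C_B(\ell_n/n)^{\alpha}$ for $t\in B_j$, so after averaging the replacement error is $O(n^{-\alpha}+(\ell_n/n)^{\alpha})\to0$; the mean part $\frac1n\sum_j|B_j|\,\IE\tilde X_0(u_j)$ is a Riemann sum converging to $\int_0^1\IE\tilde X_0(u)\dif u$ because $u\mapsto\IE\tilde X_0(u)$ is Hölder by \ref{DP_ass_general_s1}; and the centered part is handled by observing that $g_m(u):=\big\|\frac1m\sum_{t=1}^m(\tilde X_t(u)-\IE\tilde X_0(u))\big\|_1\to0$ pointwise (the $L^1$ ergodic theorem, using ergodicity of $\tilde X_t(u)$) while the family $(g_m)_m$ is equicontinuous with modulus $2C_B|u-v|^{\alpha}$, so the convergence is uniform on $[0,1]$. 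Part (ii) is the localised version: replace $X_{t,n}$ by $\tilde X_t(u)$ on $\{|t/n-u|\le b/2\}$ at cost $O(n^{-\alpha}+b^{\alpha})$, use $\frac1{nb}\sum_t K(\tfrac{t/n-u}{b})\to\int K=1$, and bound the centered kernel sum by summation by parts (writing $S_m(u)=\sum_{s=1}^m(\tilde X_s(u)-\IE\tilde X_0(u))$ and using the bounded variation of $K$) by $\frac{C}{nb}\sup_{1\le m\le nb+1}\|S_m(u)\|_1$, which is $o(1)$ on splitting the supremum at $m=\lfloor\sqrt{nb}\rfloor$ and using $\|S_m(u)\|_1/m\to0$.

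For (iii) I would write $X_{t,n}-\IE X_{t,n}=V_t+(W_t-\IE W_t)$ with $V_t:=\tilde X_t(\tfrac tn)-\IE\tilde X_0(\tfrac tn)$ and $W_t:=X_{t,n}-\tilde X_t(\tfrac tn)$, so $\|W_t\|_q\le C_Bn^{-\alpha}$ by \ref{DP_ass_general_s1}. Since $K$ has total variation $B_K$ and $\sup|K|\le|K|_\infty$, Abel summation gives, for every $u$ and any array $(U_t)$,
\[
\Big|\frac1{nb}\sum_{t=1}^n K\big(\tfrac{t/n-u}{b}\big)U_t\Big|\ \le\ \frac{B_K+|K|_\infty}{nb}\,\max_{1\le m\le n}\Big|\sum_{t=1}^m U_t\Big|,
\]
and the right‑hand side is free of $u$, so $\sup_{u\in[0,1]}$ may be moved inside at no cost. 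For the $W$‑part, $\big\|\max_m|\sum_{t\le m}(W_t-\IE W_t)|\big\|_q\le\sum_{t=1}^n\|W_t-\IE W_t\|_q\le 2nC_Bn^{-\alpha}$, which after dividing by $nb$ gives the $2C_BB_K n^{-\alpha}b^{-1}$ term in \reff{DP_prop_ergodic_eq1} and, via Markov's inequality, the third term in \reff{DP_prop_ergodic_eq2}. For the $V$‑part, \ref{DP_ass_general2_m1} yields $\delta^{V_\cdot}_q(k)=\sup_t\|V_t-V_t^{*(t-k)}\|_q\le\delta^{\tilde X}_q(k)$, so $(V_t)$ is a mean‑zero array whose partial sums have functional dependence measure summing to at most $\Delta_{0,q}^{\tilde X}$. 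Inserting this into the maximal moment inequality for arrays with summable (uniform) functional dependence measure (a von Bahr--Esseen/martingale‑type bound, with a Doob factor and an $L^q$ factor each contributing a $(q-1)^{-1}$) and dividing by $nb$ produces the first term of \reff{DP_prop_ergodic_eq1}; and, for $q>2$, inserting it into the Nagaev/Fuk‑type probability inequality with Gaussian‑like tail $G_{1-2/q}$, evaluated at level $y=\tfrac{nbx}{2(B_K+|K|_\infty)}$, produces the remaining two terms of \reff{DP_prop_ergodic_eq2} (the polynomial term from the $n\,y^{-q}$ contribution, the $G$‑term after the substitution $y/(n^{1/2}\Delta_{0,q}^{\tilde X})=n^{1/2}bx/(2(B_K+|K|_\infty)\Delta_{0,q}^{\tilde X})$).

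The main obstacle, and the only place where something genuinely needs to be said, is the \emph{uniformity in $u$} inside the $L^q$‑norm and the probability in (iii): handled directly, one would have to cover $[0,1]$ by $\sim b^{-1}$ windows and pay a union‑bound or chaining price, whereas the bounded‑variation hypothesis on $K$ lets the Abel‑summation identity above collapse the $u$‑supremum for free, and this is precisely what produces the clean $b^{-1}$ factors in \reff{DP_prop_ergodic_eq1}--\reff{DP_prop_ergodic_eq2}. Everything else is routine: verifying the moment conditions so that the cited maximal and Nagaev‑type inequalities apply with the dependence measure of $(V_t)$ dominated by $\Delta_{0,q}^{\tilde X}$, tracking constants, and — for the transformed process $g(Z_{t,n})$ — invoking Proposition~\ref{DP_lemma_preservation} to reduce to the case of $X_{t,n}$ itself.
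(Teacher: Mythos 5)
Your proposal is correct and follows essentially the same route as the paper: replace $X_{t,n}$ by the stationary approximation at cost $O(n^{-\alpha}+|t/n-u|^{\alpha})$, invoke the $L^1$ ergodic theorem for (i) and (ii), and for (iii) use summation by parts (exploiting the bounded variation and compact support of $K$ to collapse the supremum over $u$) followed by the martingale decomposition $\sum_l P_{t-l}$ with Doob/Burkholder for the moment bound and the Nagaev-type inequality of Liu--Xiao--Wu for $q>2$. The only cosmetic differences are that in (i) you obtain uniformity of the ergodic convergence via equicontinuity of $u\mapsto\|\frac1m\sum_{t\le m}(\tilde X_t(u)-\IE\tilde X_0(u))\|_1$ rather than the paper's fixed dyadic partition with a double limit, and that your Abel-summation constant $B_K+|K|_{\infty}$ reduces to the stated $B_K$ once one notes that $K$ vanishes outside its support.
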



\begin{remark}\label{DP_remark_ergodic}
	\begin{enumerate}
		\item[(i)] The additional $O(n^{-\alpha}b^{-1})$ or $O((n^{-\alpha}b^{-1})^{\tilde q})$  terms in \reff{DP_prop_ergodic_eq1} and \reff{DP_prop_ergodic_eq2}, respectively, can be omitted under Assumption \ref{DP_ass_general2}\ref{DP_ass_general2_m2}. In this case, one has to replace $\Delta_{0,\tilde q}^{\tilde X}$ by $\Delta_{0,\tilde q}^{X}$.
		\item[(ii)]
		For $q > 1$, $b = o(n^{1-\frac{1}{q}})$ and $n = o(b^{1/\alpha})$, the results of Theorem \ref{DP_prop_ergodic}(ii) and Proposition \ref{DP_prop_biasexp} (which is about bias expansion) can be used to obtain uniform convergence of the mean estimator $\hat \mu_b(u) := \frac{1}{nb}\sum_{t=1}^{n}K\big(\frac{t/n-u}{b}\big)X_{t,n}$ towards $\mu(u) := \IE \tilde X_0(u)$ in the sense that
		\[
			\sup_{u\in[\frac{b}{2},1-\frac{b}{2}]}|\hat \mu_b(u) - \mu(u)| \pto 0.
		\]
	\end{enumerate}
\end{remark}


\textbf{A central limit theorem:} We provide local and global central limit theorems under minimal moment conditions which are useful in particular to find asymptotic distributions of (nonparametric) estimators of locally stationary processes, see Section \ref{DP_section4}. For the proofs we need the dependence condition  from Assumption \ref{DP_ass_general2}\ref{DP_ass_general2_m1}.  Assumption \ref{DP_ass_general}\ref{DP_ass_general_s2}, namely $\|\sup_{u \in [0,1]}|\tilde X_t(u)|\|_q < \infty$, is crucial to show a Lindeberg-type condition under minimal moment conditions.

\begin{theorem}[Central limit theorem - global version] \label{DP_prop_clt} Let Assumption \ref{DP_ass_kernel} hold. Suppose that Assumption \ref{DP_ass_general}\ref{DP_ass_general_s1}, \ref{DP_ass_general_s2} and \ref{DP_ass_general2}\ref{DP_ass_general2_m1} hold with some $q \ge 2$ and $\alpha > \frac{1}{2}$. Define $S_n := \sum_{t=1}^{n}\big(X_{t,n}-\IE X_{t,n}\big)$ (if  $X_{t,n}$ is replaced by $g (Z_{t,n})$ in the assertions below the same assumptions must be fulfilled with $\tilde q = q\cdot (M+1)$ instead of $q$). Then we have the following invariance principle:
\[
	\{S_{\lfloor nu\rfloor}/\sqrt{n},0 \le u \le 1\} \dto \Big\{\int_{0}^{u}\sigma(v) \dif B(v), 0 \le u \le 1\Big\},
\]
where $B(v)$ is a standard-Brownian motion and the long-run variance $\sigma^2(v)$ is given by
\begin{equation}
	\sigma^2(v) = \sum_{k\in\IZ}\Cov(\tilde X_0(v), \tilde X_k(v)).\label{DP_prop_clt_eq1}
\end{equation}
The condition $\alpha > \frac{1}{2}$ can be omitted under Assumption \ref{DP_ass_general2}\ref{DP_ass_general2_m2}.
\end{theorem}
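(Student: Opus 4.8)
The invariance principle will be obtained by establishing convergence of the finite‑dimensional distributions together with tightness in the Skorokhod space $D[0,1]$, after two preliminary reductions. Write $\tilde S_m:=\sum_{t=1}^{m}\bigl(\tilde X_t(t/n)-\IE\tilde X_t(t/n)\bigr)$ and $P_j\cdot:=\IE[\cdot\mid\sF_j]-\IE[\cdot\mid\sF_{j-1}]$. Under \ref{DP_ass_general_s1} with $\alpha>\tfrac12$, the triangle inequality gives $\bigl\|\sup_{0\le u\le 1}|S_{\lfloor nu\rfloor}-\tilde S_{\lfloor nu\rfloor}|\bigr\|_2\le 2\sum_{t=1}^{n}\|X_{t,n}-\tilde X_t(t/n)\|_q\le 2C_B n^{1-\alpha}=o(\sqrt n)$, so $X_{t,n}$ may be replaced by its stationary approximation; under \ref{DP_ass_general2_m2} this crude replacement is dropped and one argues directly with $X_{t,n}=H_{t,n}(\sF_t)$, using $\delta_q^{X_{\cdot,n}}$ in place of $\delta_q^{\tilde X}$ and invoking \ref{DP_ass_general_s1} only inside pointwise covariance estimates, never in a sum over $t$ — this is exactly why $\alpha>\tfrac12$ becomes superfluous. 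Since only $q\ge 2$ is available, I would then truncate by clipping: set $\tilde X_t^{\tau}(u):=(-\tau)\vee(\tilde X_t(u)\wedge\tau)$, which is $1$‑Lipschitz in $\tilde X_t(u)$, so $\delta_q^{\tilde X^{\tau}}(k)\le\delta_q^{\tilde X}(k)$. Because $|\tilde X_t(u)-\tilde X_t^{\tau}(u)|\le\sup_v|\tilde X_t(v)|\,\Ii\{\sup_v|\tilde X_t(v)|>\tau\}$ and $\|\sup_v|\tilde X_0(v)|\|_q<\infty$ by \ref{DP_ass_general_s2}, the maximal inequality for partial sums of $\sum_t H_t(\sF_t)$ (bounding $\|P_{t-j}W_t\|_2\le\min(\delta^{W}_2(j),\|W_t\|_2)$ and summing) yields $\bigl\|\sup_u|\tilde S_{\lfloor nu\rfloor}-\tilde S^{\tau}_{\lfloor nu\rfloor}|\bigr\|_2\le\sqrt n\,c(\tau)$ with $c(\tau)\to 0$ uniformly in $n$, where $\tilde S^{\tau}_m:=\sum_{t\le m}(\tilde X^{\tau}_t(t/n)-\IE\tilde X^{\tau}_t(t/n))$. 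By a standard convergence‑together argument it suffices to prove, for each fixed $\tau$, the invariance principle for $\tilde S^{\tau}_{\lfloor n\,\cdot\,\rfloor}/\sqrt n$ towards $\int_0^{\,\cdot\,}\sigma_\tau(v)\,\dif B(v)$ with $\sigma_\tau^2(v):=\sum_{k\in\IZ}\Cov(\tilde X^{\tau}_0(v),\tilde X^{\tau}_k(v))$, and then to let $\tau\to\infty$, using $\sigma_\tau^2\to\sigma^2$ pointwise together with the dominating summable bound $|\Cov(\tilde X^{\tau}_0(v),\tilde X^{\tau}_k(v))|\le C\,\delta_2^{\tilde X}(k)$. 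From now on $|\tilde X_t(u)|\le\tau$, so all moments are finite. (The assertion for $g(Z_{t,n})$ follows by first applying Proposition~\ref{DP_lemma_preservation}.)

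Next I would pass to an $m$‑dependent array. For fixed $m$ put $Y^{(m)}_t(u):=\IE[\tilde X^{\tau}_t(u)-\IE\tilde X^{\tau}_t(u)\mid\varepsilon_t,\dots,\varepsilon_{t-m}]$, which is $m$‑dependent and bounded, and whose projections satisfy $P_{t-j}Y^{(m)}_t(u)=P_{t-j}\tilde X^{\tau}_t(u)$ for $j\le m$ and $P_{t-j}Y^{(m)}_t(u)=0$ for $j>m$, so that $P_{t-j}\bigl(\tilde X^{\tau}_t(t/n)-Y^{(m)}_t(t/n)\bigr)=0$ for all $j\le m$. Hence the same maximal inequality gives $\bigl\|\sup_u|\tilde S^{\tau}_{\lfloor nu\rfloor}-S^{(m)}_n(u)|\bigr\|_2\le 2\sqrt n\sum_{k>m}\delta_2^{\tilde X}(k)=:\sqrt n\,\rho(m)$ with $\rho(m)\to 0$, where $S^{(m)}_n(u):=\sum_{t\le\lfloor nu\rfloor}Y^{(m)}_t(t/n)$. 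Again by convergence‑together (letting $m\to\infty$ after $n\to\infty$, with $\sigma_m^2(v):=\sum_{|h|\le m}\Cov(Y^{(m)}_0(v),Y^{(m)}_h(v))\to\sigma_\tau^2(v)$ by dominated convergence), it is enough to prove the invariance principle for $S^{(m)}_n(\cdot)/\sqrt n$ towards $\int_0^{\,\cdot\,}\sigma_m(v)\,\dif B(v)$ for each fixed $m$.

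For the finite‑dimensional distributions, fix $0=u_0<u_1<\dots<u_J\le 1$; by the Cram\'er–Wold device it suffices that the increments $D_j:=\bigl(S^{(m)}_n(u_j)-S^{(m)}_n(u_{j-1})\bigr)/\sqrt n$ converge jointly to independent centred Gaussians with $\Var(D_j)\to\int_{u_{j-1}}^{u_j}\sigma_m^2(v)\,\dif v$. Partition $\{1,\dots,n\}$ into alternating big blocks of length $L_n$ and separating small blocks of length $m$, with $L_n\to\infty$ and $L_n=o(\sqrt n)$, arranging the partition so that consecutive big blocks — including those straddling a point $u_j$ — are always separated by a small block. By the exact $m$‑dependence all big‑block sums $\xi_1,\dots,\xi_{N_n}$ are mutually independent; the small‑block sums have total variance $O(mN_n)=O(mn/L_n)=o(n)$ and are negligible, as are the boundary corrections at the $u_j$. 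Using $\Cov(Y^{(m)}_s(s/n),Y^{(m)}_t(t/n))=\Cov(Y^{(m)}_s(t/n),Y^{(m)}_t(t/n))+o(1)$ for $|s-t|\le m$ (from $\|\tilde X_s(s/n)-\tilde X_s(t/n)\|_q\le C_B(m/n)^{\alpha}$) together with continuity of $v\mapsto\Cov(Y^{(m)}_0(v),Y^{(m)}_h(v))$ (from \ref{DP_ass_general_s1}), a Riemann‑sum argument identifies the variance of the big‑block part of $S^{(m)}_n(u_j)-S^{(m)}_n(u_{j-1})$ as $n\int_{u_{j-1}}^{u_j}\sigma_m^2(v)\,\dif v+o(n)$. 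Since each $\xi_k$ is bounded by $2\tau L_n=o(\sqrt n)$, the Lindeberg condition is automatic, so the Lindeberg–Feller theorem applied within each interval, combined with the mutual independence of all big blocks across intervals, yields the claimed joint Gaussian limit. It is precisely here that \ref{DP_ass_general_s2}, through the clipping of the first paragraph, is what makes the Lindeberg condition hold under the minimal assumption $q\ge 2$.

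For tightness, the bounded $m$‑dependent array $Y^{(m)}$ has $\delta_{q'}^{Y^{(m)}}(k)$ vanishing for $k>m$ and bounded for $k\le m$, hence summable for every $q'>2$; the $q'$‑th moment maximal inequality for sums of $\sum_t H_t(\sF_t)$ gives $\bigl\|\max_{k\le\ell}|S^{(m)}_{r+k}(\cdot)-S^{(m)}_r(\cdot)|\bigr\|_{q'}\le C\sqrt{\ell}$ uniformly in $r$, and covering $[0,1]$ by $\lceil\delta^{-1}\rceil$ intervals of length $\lfloor\delta n\rfloor$ then yields $\IP\bigl(\sup_{|s-t|\le\delta}|S^{(m)}_n(t)-S^{(m)}_n(s)|>\varepsilon\sqrt n\bigr)\le C\varepsilon^{-q'}\delta^{q'/2-1}\to 0$ as $\delta\to0$, which is the tightness criterion in $D[0,1]$. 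Combining, $S^{(m)}_n(\cdot)/\sqrt n\dto\int_0^{\,\cdot\,}\sigma_m(v)\,\dif B(v)$, and letting $m\to\infty$, then $\tau\to\infty$, and finally recalling the first reduction (which needs $\alpha>\tfrac12$ under \ref{DP_ass_general2_m1} and is vacuous under \ref{DP_ass_general2_m2}) completes the proof. I expect the main obstacle to be the interaction of the minimal moment condition $q\ge2$ with, on the one hand, the Lindeberg condition — handled by the early clipping and \ref{DP_ass_general_s2} — and, on the other hand, the requirement that all uses of \ref{DP_ass_general_s1} stay confined to pointwise covariance estimates rather than summed error terms (the latter would force $\alpha>\tfrac12$); arranging the non‑stationary blocking so that the long‑run variance emerges cleanly as the Riemann integral $\int_{u_{j-1}}^{u_j}\sigma^2(v)\,\dif v$ is the technical heart.
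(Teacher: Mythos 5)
Your argument is correct, and its core is genuinely different from the paper's. You share the two outer reductions with the paper almost verbatim: the replacement of $X_{t,n}$ by $\tilde X_t(t/n)$ at cost $O(n^{1-\alpha})=o(\sqrt n)$ when $\alpha>\tfrac12$, and (under \ref{DP_ass_general2}\ref{DP_ass_general2_m2}) the observation that the only remaining uses of \ref{DP_ass_general}\ref{DP_ass_general_s1} are pointwise, which the paper implements through the bound $\|P_{t-l}(X_{t,n}-\tilde X_t(t/n))\|_2\le\min\{\delta_n(l),c_n\}$ and you implement through per-pair covariance comparisons inside blocks — the same mechanism in different clothing. Where you diverge is the limit theorem itself. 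The paper truncates the martingale expansion ($\tilde S_{k,n,L}=\sum_{l<L}\sum_{t\le k}P_{t-l}\tilde X_t(t/n)$), shifts indices to obtain a genuine martingale difference array $\sum_{l<L}P_t\tilde X_{t+l}(\tfrac{t+l}{n})$, and applies Billingsley's functional martingale CLT (Theorem 18.2); the conditional variance is identified by an ergodic-theorem argument over dyadic time blocks, and the Lindeberg condition is verified directly from $\|\sup_u|\tilde X_0(u)|\|_2<\infty$, i.e.\ \ref{DP_ass_general}\ref{DP_ass_general_s2}. You instead clip at level $\tau$ (using \ref{DP_ass_general_s2} to make the clipping error $o(\sqrt n)$ uniformly via $\sum_j\min\{2\delta_2^{\tilde X}(j),c(\tau)\}$), pass to an $m$-dependent conditional-expectation approximation, and run a big-block/small-block Lindeberg--Feller argument with exactly independent blocks, so that the Lindeberg condition becomes automatic from boundedness and the long-run variance emerges as a Riemann sum rather than an ergodic average; tightness is then a routine higher-moment bound for the bounded $m$-dependent array. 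Your route buys elementary independence in place of the martingale/ergodic machinery at the price of two extra layers of approximation ($\tau\to\infty$ after $m\to\infty$ after $n\to\infty$) and a convergence-together bookkeeping step; the paper's route is shorter once the martingale CLT is taken off the shelf. Two small points to tidy if you write this out: the dominating bound for $|\Cov(\tilde X_0^\tau(v),\tilde X_k^\tau(v))|$ should be $\sum_{j\ge0}\delta_2^{\tilde X}(j)\delta_2^{\tilde X}(j+k)$ (summable in $k$, which is all you need) rather than $C\delta_2^{\tilde X}(k)$; and the (M2)-only case requires you to rerun the clipping and $m$-dependence steps on $X_{t,n}=H_{t,n}(\sF_t)$ itself with $\delta_q^{X_{\cdot,n}}$, which works but deserves more than the parenthetical you give it.
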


Finally, we present a simple localized version of the central limit theorem for general locally stationary processes.

\begin{theorem}[Central limit theorem - local version]\label{DP_clt_local} Let Assumption \ref{DP_ass_kernel} hold. Suppose that Assumption \ref{DP_ass_general}\ref{DP_ass_general_s1}, \ref{DP_ass_general_s2} and \ref{DP_ass_general2}\ref{DP_ass_general2_m1} hold with some $q \ge 2$ and $1\ge \alpha > 0$ (if  $X_{t,n}$ is replaced by $g (Z_{t,n})$ in the assertion below the same assumptions must be fulfilled with $\tilde q = q\cdot (M+1)$ instead of $q$). Then for all $u\in (0,1)$, provided that $\sqrt{nb}\cdot n^{-\alpha} \to 0$, $b \to 0$ and $nb \to \infty$:
\begin{eqnarray*}
	\frac{1}{\sqrt{nb}}\sum_{t=1}^{n}K\Big(\frac{t/n-u}{b}\Big)\cdot \big\{X_{t,n} - \IE X_{t,n}\big\} \dto N\big(0, \int K(x)^2 \dif x\cdot \sigma^2(u)\big)
\end{eqnarray*}
with $\sigma^2(u)$ defined in \reff{DP_prop_clt_eq1}.
\end{theorem}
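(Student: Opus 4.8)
The plan is to replace $X_{t,n}$ inside the kernel sum by the stationary approximation $\tilde X_t(u)$ at the fixed point $u$, and then to establish a central limit theorem for the resulting kernel-weighted sum of a stationary ergodic Bernoulli shift by a martingale approximation. By Proposition~\ref{DP_lemma_preservation}(i) the case $g(Z_{t,n})$ follows from that of $X_{t,n}$ (with $q$ replaced by $\tilde q=q(M+1)$), so I only treat $X_{t,n}$. Throughout write $\bar X_t(u):=\tilde X_t(u)-\IE\tilde X_t(u)$, $w_t:=K\big(\tfrac{t/n-u}{b}\big)$ (nonzero for at most $nb+1$ indices, so $\sum_t w_t^2\le|K|_\infty^2(nb+1)$), $P_j\cdot:=\IE[\cdot\mid\sF_j]-\IE[\cdot\mid\sF_{j-1}]$, and recall $\|P_{t-k}Y_t\|_q\le\delta_q^Y(k)$ (cf.\ \cite{wu2005}).

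\textbf{Step 1 (reduction to $\tilde X_t(u)$).} By Assumption~\ref{DP_ass_general}\ref{DP_ass_general_s1}, $\|X_{t,n}-\tilde X_t(t/n)\|_q\le C_B n^{-\alpha}$, so replacing $X_{t,n}-\IE X_{t,n}$ by $\tilde X_t(t/n)-\IE\tilde X_t(t/n)$ changes $\frac{1}{\sqrt{nb}}\sum_t w_t(\cdot)$ by at most $\frac{|K|_\infty}{\sqrt{nb}}(nb+1)C_B n^{-\alpha}=O(\sqrt{nb}\,n^{-\alpha})\to0$ in $L^q$, $q\ge2$. Next, to replace $\tilde X_t(t/n)$ by $\tilde X_t(u)$, set $Y_t:=\tilde X_t(t/n)-\tilde X_t(u)-\IE[\tilde X_t(t/n)-\tilde X_t(u)]$; then $\sup_t\|Y_t\|_q\le\rho_b:=2C_B(b/2)^\alpha\to0$ and $\sup_t\|Y_t-Y_t^{*(t-k)}\|_q\le2\delta_q^{\tilde X}(k)$, hence $\|P_{t-k}Y_t\|_2\le2\min\{\rho_b,2\delta_q^{\tilde X}(k)\}$. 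Since for each fixed $k$ the sequence $(w_t P_{t-k}Y_t)_t$ is a martingale difference sequence (w.r.t.\ $\sF_{t-k}$),
\[
	\Big\|\frac{1}{\sqrt{nb}}\sum_t w_t Y_t\Big\|_2\le\frac{1}{\sqrt{nb}}\sum_{k\ge0}\Big(\sum_t w_t^2\|P_{t-k}Y_t\|_2^2\Big)^{1/2}\le 8|K|_\infty\sum_{k\ge0}\min\{\rho_b,\delta_q^{\tilde X}(k)\}\quad(nb\ge3),
\]
which tends to $0$ by dominated convergence (the summands are dominated by the summable $\delta_q^{\tilde X}(k)$ of Assumption~\ref{DP_ass_general2}\ref{DP_ass_general2_m1} and vanish as $\rho_b\to0$). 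It therefore suffices to prove $T_n:=\frac{1}{\sqrt{nb}}\sum_{t=1}^n w_t\bar X_t(u)\dto N\big(0,\int K(x)^2\dif x\cdot\sigma^2(u)\big)$.

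\textbf{Step 2 (martingale approximation and CLT for $T_n$).} Fix $L\in\IN$ and decompose $\bar X_t(u)=\sum_{k\ge0}P_{t-k}\bar X_t(u)$. The same weighted-sum estimate as in Step~1 bounds the contribution of the tail $\sum_{k\ge L}P_{t-k}\bar X_t(u)$ to $T_n$, in $L^2$, by $C|K|_\infty\sum_{k\ge L}\delta_q^{\tilde X}(k)$, uniformly in $n$. Re-indexing $\sum_t w_t\sum_{k=0}^{L-1}P_{t-k}\bar X_t(u)$ by $j=t-k$, and absorbing the $O(L)$ boundary blocks together with the discrepancy between $w_{j+l}$ and $w_j$ (a routine estimate via the bounded variation $B_K$ of $K$; both are $O(L^{3/2}/\sqrt{nb})$ in $L^2$, hence negligible for fixed $L$), leaves $\sum_t W_{t,n}$ with $W_{t,n}:=\frac{1}{\sqrt{nb}}w_t\sum_{l=0}^{L-1}P_t\bar X_{t+l}(u)$, a martingale difference array w.r.t.\ $(\sF_t)$. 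For this array: (a) $\sum_t\IE[W_{t,n}^2\mid\sF_{t-1}]=\frac{1}{nb}\sum_t w_t^2\,\IE\big[\big(\sum_{l=0}^{L-1}P_t\bar X_{t+l}(u)\big)^2\mid\sF_{t-1}\big]\pto\int K(x)^2\dif x\cdot\big\|\sum_{l=0}^{L-1}P_0\bar X_l(u)\big\|_2^2$, by a kernel-weighted ergodic theorem applied to the stationary ergodic process $\IE[(\sum_{l<L}P_0\bar X_l(u))^2\mid\sF_{-1}]$ — proved, as for Theorem~\ref{DP_prop_ergodic}, by summation by parts and the ergodic theorem, using that $K(\cdot)^2$ is bounded, of bounded variation and supported in $[-\tfrac12,\tfrac12]$; (b) since $|W_{t,n}|\le\frac{|K|_\infty L}{\sqrt{nb}}\max_{0\le l<L}|P_t\bar X_{t+l}(u)|$, the sum $\sum_t\IE[W_{t,n}^2\Ii_{\{|W_{t,n}|>\eta\}}]$ is a finite sum of terms $\frac{1}{nb}\sum_t w_t^2\,\IE[(P_0\bar X_l(u))^2\Ii_{\{|P_0\bar X_{l'}(u)|>c\sqrt{nb}\}}]\to0$, using $nb\to\infty$ and $q\ge2$. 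By the martingale CLT (\cite{billingsley}, Theorem~18.1), $\sum_t W_{t,n}\dto N\big(0,\int K(x)^2\dif x\,\|\sum_{l<L}P_0\bar X_l(u)\|_2^2\big)$. Finally, $\|P_0\bar X_l(u)\|_2\le\delta_q^{\tilde X}(l)$ is summable, so $\|\sum_{l<L}P_0\bar X_l(u)\|_2^2\to\|\sum_{l\ge0}P_0\bar X_l(u)\|_2^2=\sigma^2(u)$ as $L\to\infty$ (the standard long-run-variance identity for Bernoulli shifts, cf.\ \cite{wu2005}); combining this with $\lim_L\limsup_n\|T_n-\sum_t W_{t,n}\|_2=0$ in a converging-together argument yields $T_n\dto N(0,\int K(x)^2\dif x\cdot\sigma^2(u))$.

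\textbf{Main obstacle.} The only genuinely delicate point is the second replacement in Step~1, $\tilde X_t(t/n)\rightsquigarrow\tilde X_t(u)$: Assumption~\ref{DP_ass_general}\ref{DP_ass_general_s1} by itself yields only the bound $O(\sqrt{nb}\,b^\alpha)$ for this replacement, which need not vanish (no hypothesis forces $b=o(n^{-1/(2\alpha+1)})$), so one must genuinely exploit the summable functional dependence measure of Assumption~\ref{DP_ass_general2}\ref{DP_ass_general2_m1} to see that the kernel-weighted sum of the \emph{centered} small increments $Y_t$ is itself asymptotically negligible. Everything else is the standard martingale-approximation machinery; within it the only non-routine ingredient is the kernel-weighted ergodic theorem for the conditional variances in Step~2(a), and Assumption~\ref{DP_ass_general}\ref{DP_ass_general_s2}, though listed among the hypotheses, enters here only to the extent that it supplies the moment bounds already invoked above.
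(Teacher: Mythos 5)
Your proof is correct, and its overall architecture coincides with the paper's: reduce to the stationary approximation, truncate the projection expansion at lag $L$, apply the martingale CLT (Billingsley, Theorem~18.1) with the conditional variances handled by the kernel-weighted ergodic lemma, and let $L\to\infty$ via a converging-together argument. Where you genuinely diverge is in the treatment of the replacement $\tilde X_t(t/n)\rightsquigarrow\tilde X_t(u)$. The paper never bounds $\frac{1}{\sqrt{nb}}\sum_t K(\cdot)\,(\tilde X_t(t/n)-\tilde X_t(u))$ directly: it keeps the time-varying argument inside the martingale differences $P_t\tilde X_{t+l}(t/n)$ all the way to the CLT, and only compares the \emph{conditional variances} $W_t^2(t/n)$ and $W_t^2(u)$ in $L^1$, where the $\frac{1}{nb}$ normalization turns the $(b/2)^\alpha$ increment into an $o(1)$ error with no $\sqrt{nb}$ amplification; the price is that the Lindeberg condition must then be verified uniformly in the time argument, which is exactly where Assumption~\ref{DP_ass_general}\ref{DP_ass_general_s2} ($\|\sup_u|\tilde X_t(u)|\|_q<\infty$) is invoked. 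You instead perform the replacement up front on the centered increments $Y_t$, using the bound $\|P_{t-k}Y_t\|_2\le 2\min\{\rho_b,2\delta_q^{\tilde X}(k)\}$ and dominated convergence over $k$ — the same $\min\{\cdot,\cdot\}$ device the paper itself deploys in the proof of the global CLT to drop the $\alpha>\frac12$ condition. After that everything sits at the fixed point $u$, so your Lindeberg step needs only $\tilde X_\cdot(u)\in L^2$ and plain dominated convergence; as you correctly observe, \ref{DP_ass_general_s2} then becomes essentially superfluous for this local statement, so your route in fact proves the theorem under weaker hypotheses. A further small point in your favour: the paper's proof at one step invokes Lipschitz continuity of $K$, which is not among the stated assumptions (only bounded variation is), whereas your handling of the shifted kernel weights via $B_K$ is consistent with Assumption~\ref{DP_ass_kernel}. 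The only cosmetic caveat is that your bound $\sup_t\|Y_t\|_q\le\rho_b$ holds only for $t$ in the kernel support, which is all you use, and your explicit constants are rough but harmless.
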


\section{Differential calculus for nonstationary processes}
\label{DP_section3}

In this section we prove almost sure and uniform $L^q$ Taylor expansions of $X_{t,n}$ and invent a kind of differential calculus for locally stationary processes. Below we will use this stronger kind of approximation for proving deterministic and stochastic bias expansions. While deterministic bias expansions are used to bound the expectation of expressions which include $X_{t,n}$, stochastic bias expansions can be used to replace the whole localized sum by a localized sum of the stationary process $\tilde X_t(u)$ which is much easier to analyze with known tools. We also prove results on the spectrum and the empirical distribution function.

\subsection{Taylor expansions and expansions of localized sums}

\begin{proposition}[Taylor expansion]\label{DP_cor_taylor_exp}
	Suppose that Assumption \ref{DP_ass_kernel} holds. Suppose that Assumption \ref{DP_ass_general} holds for some $q > 0$. Then we have for all $u\in[0,1]$ and $t = 1,...,n$:
	\begin{equation}
		\tilde X_t\big(\frac{t}{n}\big) = \tilde X_t(u) + \big(\frac{t}{n} - u\big)\cdot \partial_u \tilde X_t(u) + R_{t,n} \quad a.s.,\label{DP_taylorexpansion1}
	\end{equation}
	If $|\frac{t}{n}-u| = o(1)$, then it holds that $R_{t,n} = o_{a.s.}(|\frac{t}{n}-u|)$. Furthermore,
	\begin{eqnarray}
		&& \frac{1}{n}\sum_{t=1}^{n}K_b\big(\frac{t}{n}-u\big)\tilde X_t(\frac{t}{n})\nonumber\\
		&=& \frac{1}{n}\sum_{t=1}^{n}K_b\big(\frac{t}{n}-u\big)\tilde X_t(u) + \frac{1}{n}\sum_{t=1}^{n}K_b\big(\frac{t}{n}-u\big)\big(\frac{t}{n} - u\big) \partial_u \tilde X_t(u) + R_{n} \quad a.s.\label{DP_taylorexpansion2}
	\end{eqnarray}
	holds and $|R_n| \le |K|_{\infty}\cdot b \cdot \sup_{|v-u| \le b}|\partial_u X_t(v) - \partial_u X_t(u)| = o_{a.s.}(b)$.
\end{proposition}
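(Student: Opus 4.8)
The plan is to obtain \reff{DP_taylorexpansion1} as a pathwise first-order Taylor expansion with integral remainder, and then to derive \reff{DP_taylorexpansion2} by summing \reff{DP_taylorexpansion1} against the kernel weights and controlling the remainder term uniformly. The key point that makes everything work is Assumption \ref{DP_ass_general}\ref{DP_ass_general_s3}, which guarantees (after passing to the a.s.\ continuously differentiable modification from \ref{DP_ass_general_s2}--\ref{DP_ass_general_s3}) that $u \mapsto \tilde X_t(u)$ is, on a set of probability one, a $C^1$ function on the compact interval $[0,1]$, with $\sup_{u\in[0,1]}|\partial_u \tilde X_t(u)|$ having a finite $q$-th moment.

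First I would fix $t$ and work on the almost-sure event on which $v \mapsto \tilde X_t(v)$ is continuously differentiable. On this event the fundamental theorem of calculus gives
\[
	\tilde X_t\big(\tfrac{t}{n}\big) - \tilde X_t(u) = \int_u^{t/n} \partial_u \tilde X_t(s)\dif s
	= \big(\tfrac{t}{n}-u\big)\,\partial_u \tilde X_t(u) + \int_u^{t/n}\big(\partial_u \tilde X_t(s) - \partial_u \tilde X_t(u)\big)\dif s,
\]
so \reff{DP_taylorexpansion1} holds with $R_{t,n} := \int_u^{t/n}(\partial_u \tilde X_t(s) - \partial_u \tilde X_t(u))\dif s$. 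Since $s \mapsto \partial_u \tilde X_t(s)$ is continuous at $u$, for $|\tfrac{t}{n}-u| = o(1)$ we get $|R_{t,n}| \le |\tfrac{t}{n}-u|\cdot \sup_{|s-u|\le |t/n-u|}|\partial_u \tilde X_t(s) - \partial_u \tilde X_t(u)| = o_{a.s.}(|\tfrac{t}{n}-u|)$, which is the claimed remainder bound.

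Next, for \reff{DP_taylorexpansion2} I would multiply \reff{DP_taylorexpansion1} by $\tfrac{1}{n}K_b(\tfrac{t}{n}-u)$ and sum over $t = 1,\dots,n$; the first two terms on the right are exactly the two displayed sums, and $R_n := \tfrac{1}{n}\sum_{t=1}^n K_b(\tfrac{t}{n}-u)R_{t,n}$ is the remainder. Because $K$ is supported in $[-\tfrac12,\tfrac12]$, the only indices $t$ contributing to $R_n$ satisfy $|\tfrac{t}{n}-u| \le \tfrac{b}{2}$, so for each such $t$ the segment of integration in $R_{t,n}$ lies in $\{v : |v-u|\le b\}$ and $|R_{t,n}| \le \tfrac{b}{2}\sup_{|v-u|\le b}|\partial_u \tilde X_t(v) - \partial_u \tilde X_t(u)|$. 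Bounding $|K_b(\tfrac{t}{n}-u)| \le |K|_\infty/b$ and using $\tfrac{1}{n}\#\{t : |\tfrac{t}{n}-u|\le \tfrac b2\} \le b + \tfrac1n \le 2b$ for $n$ large collapses the sum to the stated bound $|R_n| \le |K|_\infty\cdot b\cdot \sup_{|v-u|\le b}|\partial_u \tilde X_t(v) - \partial_u \tilde X_t(u)|$; and since $v \mapsto \partial_u \tilde X_t(v)$ is a.s.\ continuous at $u$ and $b \to 0$, this supremum tends to $0$ a.s., giving $R_n = o_{a.s.}(b)$.

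The only mild subtlety — and the step I would be most careful about — is the uniform-in-$t$ handling implicit in the ``$\sup_{|v-u|\le b}|\partial_u X_t(v)-\partial_u X_t(u)|$'' notation: strictly speaking the bound for $R_n$ should be read with the supremum pulled inside the sum over the finitely many contributing $t$, and the passage $R_n = o_{a.s.}(b)$ uses that each of the (bounded number, $\asymp nb \to \infty$, so this is not quite finitely many) summands is $o_{a.s.}(b)$; the clean way around this is to note $|R_n| \le |K|_\infty\, b\, \max_{t:\,|t/n-u|\le b/2}\sup_{|v-u|\le b}|\partial_u\tilde X_t(v)-\partial_u\tilde X_t(u)|$, which by stationarity of $(\partial_u\tilde X_t(\cdot))_t$ and the modulus-of-continuity argument (together with $\|\sup_u|\partial_u\tilde X_t(u)|\|_q<\infty$ to control the maximum) still vanishes in the appropriate sense; alternatively one reads \reff{DP_taylorexpansion2} as the statement that the remainder is controlled termwise, as stated. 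Everything else is the elementary calculus and kernel-support bookkeeping above.
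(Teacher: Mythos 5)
Your proof follows essentially the same route the paper takes (the identity $\tilde X_t(t/n)=\tilde X_t(u)+(\tfrac{t}{n}-u)\partial_u\tilde X_t(u)+\int_u^{t/n}\{\partial_u\tilde X_t(s)-\partial_u\tilde X_t(u)\}\,\mathrm{d}s$ on the a.s.\ $C^1$ modification, followed by the kernel-support bookkeeping), and it is correct. The uniformity-in-$t$ subtlety you flag in the last paragraph is a genuine looseness already present in the paper's own statement of the remainder bound, and your reading of it is the right one.
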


If one is interested in the approximation of moments of localized sums (as it may be the case in bias expansions in nonparametric frameworks), the following theorem is appropriate. It also closes the gap between the locally stationary process $X_{t,n}$ and its approximation $\tilde X_t(t/n)$.


\begin{corollary}[Almost sure and $L^q$ expansion of localized sums]\label{DP_cor_taylor_sums}
	Suppose that Assumption \ref{DP_ass_kernel} holds. Suppose that Assumption \ref{DP_ass_general} holds for some $q \ge 1$. Then for each fixed $u \in (0,1)$,
	\[
		\Big\|\frac{1}{n}\sum_{t=1}^{n}K_b\big(\frac{t}{n}-u\big)\cdot X_{t,n} - \frac{1}{n}\sum_{t=1}^{n}K_b\big(\frac{t}{n}-u\big)\cdot \tilde X_{t}\big(\frac{t}{n}\big)\Big\|_q = O(n^{-1}),
	\]
	and the expansion \reff{DP_taylorexpansion2} is valid with  $\|R_n\|_q = o(b)$.
\end{corollary}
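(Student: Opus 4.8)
The plan is to split the claim into its two assertions and handle them in turn. For the first assertion, the triangle inequality in $L^q$ gives
\[
\Big\|\frac{1}{n}\sum_{t=1}^{n}K_b\big(\tfrac{t}{n}-u\big)\big(X_{t,n} - \tilde X_t(\tfrac{t}{n})\big)\Big\|_q \le \frac{1}{n}\sum_{t=1}^{n}\big|K_b\big(\tfrac{t}{n}-u\big)\big|\cdot \big\|X_{t,n} - \tilde X_t(\tfrac{t}{n})\big\|_q.
\]
By Assumption \ref{DP_ass_general}\ref{DP_ass_general_s1} the norm in each summand is bounded by $C_B n^{-\alpha}$; since $K$ has compact support $[-\tfrac12,\tfrac12]$ and is bounded, at most $O(nb)$ of the terms are nonzero and each is bounded by $|K|_\infty/b$. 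Hence the right-hand side is $\le |K|_\infty C_B n^{-\alpha}\cdot \frac{1}{nb}\cdot O(nb) = O(n^{-\alpha})$. Here I note a subtlety: the statement claims $O(n^{-1})$, so one should check whether Assumption \ref{DP_ass_general}\ref{DP_ass_general_s3} (which forces $\alpha=1$) is implicitly in force, or whether the bound should read $O(n^{-\alpha})$; under \ref{DP_ass_general_s3}, $\alpha=1$ and the stated rate follows. (More carefully, counting the nonzero terms gives a factor $nb+1$, so the bound is $|K|_\infty C_B n^{-\alpha}(1 + \tfrac{1}{nb})$, which is $O(n^{-\alpha})$ as $nb\to\infty$; if one does not assume $nb\to\infty$ here one keeps the $n^{-\alpha}b^{-1}$ term, but for a fixed-$u$ statement with $b=b_n$ the reader will have $nb\to\infty$ in mind.)

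For the second assertion I would invoke Proposition \ref{DP_cor_taylor_exp} directly: it already gives the almost sure expansion \reff{DP_taylorexpansion2} with remainder satisfying $|R_n|\le |K|_\infty\, b\, \sup_{|v-u|\le b}|\partial_u\tilde X_t(v) - \partial_u\tilde X_t(u)|$ almost surely. The task is therefore only to upgrade the $o_{a.s.}(b)$ control to $\|R_n\|_q = o(b)$. The key estimate is
\[
\|R_n\|_q \le |K|_\infty\, b\,\Big\|\sup_{|v-u|\le b}\big|\partial_u\tilde X_t(v) - \partial_u\tilde X_t(u)\big|\Big\|_q,
\]
and it suffices to show the $L^q$ norm on the right tends to $0$ as $b\to 0$. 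This follows from a dominated-convergence argument: by Assumption \ref{DP_ass_general}\ref{DP_ass_general_s3}, $v\mapsto\partial_u\tilde X_t(v)$ is a.s. continuous on the compact set $[0,1]$, hence uniformly continuous, so $\sup_{|v-u|\le b}|\partial_u\tilde X_t(v)-\partial_u\tilde X_t(u)|\to 0$ a.s. as $b\to 0$; and this quantity is dominated uniformly in $b$ by $2\sup_{v\in[0,1]}|\partial_u\tilde X_t(v)|$, which lies in $L^q$ by \ref{DP_ass_general_s3}. Dominated convergence then yields the $L^q$ convergence to $0$, i.e. $\|R_n\|_q = o(b)$.

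The main obstacle is essentially bookkeeping rather than depth: making sure the counting of nonzero kernel terms produces the rate claimed in the statement (reconciling $n^{-1}$ versus $n^{-\alpha}b^{-1}$, which is why the corollary is phrased under the full Assumption \ref{DP_ass_general} with $\alpha=1$), and being careful that the almost-sure remainder bound from Proposition \ref{DP_cor_taylor_exp} is stated for a fixed $t$ whereas $R_n$ aggregates over $t$ — one uses stationarity of $\tilde X_t(\cdot)$ so that $\|\sup_{|v-u|\le b}|\partial_u\tilde X_t(v)-\partial_u\tilde X_t(u)|\|_q$ does not depend on $t$, and then the kernel weights sum (after dividing by $n$) to $O(1)$ by bounded variation, giving the clean bound above. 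No new mixing input is needed; Assumption \ref{DP_ass_general2} plays no role here.
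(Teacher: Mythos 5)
Your proof is correct and follows the same route the paper uses: the paper prints no separate proof of this corollary, but its inline arguments for Proposition \ref{DP_prop_biasexp}(a) and in the proof of Proposition \ref{DP_prop_biasexp_stoch} rest on exactly the two ingredients you employ, namely the triangle inequality with \ref{DP_ass_general_s1} (and $\alpha=1$ forced by \ref{DP_ass_general_s3}) for the $O(n^{-1})$ term, and dominated convergence applied to $\sup_{|v-u|\le b}|\partial_u\tilde X_0(v)-\partial_u\tilde X_0(u)|$ with the envelope $\sup_{v}|\partial_u\tilde X_0(v)|\in L^q$ from \ref{DP_ass_general_s3} for $\|R_n\|_q=o(b)$. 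Your bookkeeping remarks (the implicit $nb\gtrsim 1$ needed to absorb the $(nb)^{-1}$ factor, and the $t$-independence of the $L^q$ norm by stationarity) are exactly the right points to flag.
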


It is also possible to use expansions similar to above if the sum is not localized by a kernel. An example can be found in the proof of Theorem \ref{DP_prop_ergodic}.

\subsection{Bias Expansions}

In nonparametric statistics, bias expansions play an important role to control the mean squared error (MSE) of estimators. Here we give an approach to estimate the deterministic bias term involving locally stationary processes. In recent years (for instance due to model selection via contrast minimization) a more careful analysis of the stochastic part in the calculation of the MSE became important. We make a contribution to this topic via a stochastic bias expansion which allows us to remove the bias from a localized sum $\frac{1}{n}\sum_{t=1}^{n}K_b\big(\frac{t}{n}-u\big)X_{t,n}$ such that only a localized sum over a stationary process $\frac{1}{n}\sum_{t=1}^{n}K_b\big(\frac{t}{n}-u\big)\tilde X_{t}(u)$ remains for which much more theoretical work was done. To obtain bias expansions of smaller order than $O(b)$ we have to assume differentiability of the upcoming expectations which is ensured by assuming Assumption \ref{DP_ass_general}\ref{DP_ass_general_s3}. To emphasize some differences that occur when Assumption \ref{DP_ass_general}\ref{DP_ass_general_s3} is changed to differentiability in $L^q$, we state the deterministic bias expansion for both settings and comment in Remark \ref{remark_biasdet}.

\begin{proposition}[Deterministic bias expansion]\label{DP_prop_biasexp}
	Suppose that Assumption \ref{DP_ass_kernel} holds. Let $q \ge 1$. Suppose that Assumption \ref{DP_ass_general}\ref{DP_ass_general_s1} is fulfilled with some $1 \ge \alpha > 0$, then we have uniformly in $u\in[0,1]$:
	\begin{equation}
		\frac{1}{n}\sum_{t=1}^{n}K_b\big(\frac{t}{n}-u\big)\big\{\IE X_{t,n} - \IE \tilde X_t(t/n)\big\} = O(n^{-\alpha}),\label{DP_cor_biasexp_eq1}
	\end{equation}
	and
	\begin{equation}
		\frac{1}{n}\sum_{t=1}^{n}K_b\big(\frac{t}{n}-u\big)\big\{\IE \tilde X_{t}(t/n) - \IE \tilde X_0(u)\big\} = O(b^{\alpha}) + O(n^{-1}).\label{DP_cor_biasexp_eq2}
	\end{equation}
	Now assume additionally that $K$ is symmetric.
	\begin{enumerate}
		\item[(a)] If Assumption \ref{DP_ass_general}\ref{DP_ass_general_s3} holds, then \reff{DP_cor_biasexp_eq1} is valid with $\alpha = 1$ and we have uniformly in $u \in [\frac{b}{2},1-\frac{b}{2}]$
	\begin{equation}
		\frac{1}{n}\sum_{t=1}^{n}K_b\big(\frac{t}{n}-u\big)\big\{\IE \tilde X_{t}(t/n) - \IE \tilde X_0(u)\big\} = o(b) + O(n^{-1}).\label{DP_cor_biasexp_eq3}
	\end{equation}
		\item[(b)] Suppose that $[0,1] \to L^{q}, u \mapsto \tilde X_0(u)$ is Fr\'{e}chet differentiable with derivative $\tilde D_t(u)$, 
		i.e. for all $u \in [0,1]$,
\begin{eqnarray}
	\lim_{h\to 0}\Big\|\frac{\tilde X_0(u+h) - \tilde X_0(u)}{h} - \tilde D_0(u)\Big\|_q &=& 0.\label{lqdiff_eq1}
\end{eqnarray}
Then the statement of (a) holds.
	\end{enumerate}
\end{proposition}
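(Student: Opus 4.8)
The plan is to treat the four assertions in turn, using throughout two elementary facts: the total kernel mass $\frac1n\sum_{t=1}^{n}|K_b(\tfrac tn-u)|$ is $O(1)$ uniformly in $u,n,b$ (at most $nb+1$ summands are nonzero, each bounded by $|K|_{\infty}/b$), and $|\IE W|\le\|W\|_1\le\|W\|_q$ for $q\ge 1$.

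\textbf{The two bounds under \ref{DP_ass_general_s1}.} For \reff{DP_cor_biasexp_eq1} I would just invoke the second inequality of \ref{DP_ass_general_s1}, which gives $|\IE X_{t,n}-\IE\tilde X_t(t/n)|\le\|X_{t,n}-\tilde X_t(\tfrac tn)\|_q\le C_B n^{-\alpha}$ uniformly in $t$; multiplying by the $O(1)$ kernel mass yields $O(n^{-\alpha})$ uniformly in $u$. For \reff{DP_cor_biasexp_eq2} I set $\mu(v):=\IE\tilde X_0(v)$; stationarity of $(\tilde X_t(v))_t$ in $t$ gives $\IE\tilde X_t(t/n)=\mu(t/n)$, and the first inequality of \ref{DP_ass_general_s1} makes $\mu$ globally $\alpha$-Hölder, $|\mu(v)-\mu(w)|\le C_B|v-w|^{\alpha}$. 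Since $|t/n-u|\le b/2$ on the support of $t\mapsto K_b(\tfrac tn-u)$, each $|\mu(t/n)-\mu(u)|\le C_B(b/2)^{\alpha}$, which combined with the $O(1)$ kernel mass gives the $O(b^{\alpha})$ term; the $O(n^{-1})$ absorbs the Riemann-sum discretization of $\frac1n\sum_t K_b(\tfrac tn-u)$ against $\int K_b=1$.

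\textbf{Part (a).} Under \ref{DP_ass_general_s3} the paths $u\mapsto\tilde X_t(u)$ are a.s.\ $C^1$ with $\IE\sup_{u}|\partial_u\tilde X_0(u)|<\infty$. Writing the difference quotient of $\tilde X_0$ as $h^{-1}\int_u^{u+h}\partial_u\tilde X_0(s)\,\dif s$ and using dominated convergence (dominating function $\sup_s|\partial_u\tilde X_0(s)|$) — and, by the same argument, continuity of $u\mapsto\IE\partial_u\tilde X_0(u)$ — I obtain $\mu\in C^1[0,1]$ with $\mu'(u)=\IE\partial_u\tilde X_0(u)$; thus $\mu'$ is bounded and has a modulus of continuity $\omega$ with $\omega(\delta)\to 0$. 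Taylor's theorem gives $\mu(t/n)-\mu(u)=(\tfrac tn-u)\mu'(u)+r_{t,u}$ with $r_{t,u}=\int_u^{t/n}(\mu'(s)-\mu'(u))\,\dif s$, hence $|r_{t,u}|\le|\tfrac tn-u|\,\omega(|\tfrac tn-u|)\le\tfrac b2\,\omega(\tfrac b2)$ on the kernel window, so $\frac1n\sum_t K_b(\tfrac tn-u)\,r_{t,u}=O(b)\,\omega(b/2)=o(b)$ uniformly in $u$. For the linear part $\mu'(u)\cdot\frac1n\sum_t K_b(\tfrac tn-u)(\tfrac tn-u)$ I use that $K$ is symmetric: for $u\in[\tfrac b2,1-\tfrac b2]$ the window lies inside $[0,1]$, so $\int_0^1 K_b(v-u)(v-u)\,\dif v=b\int xK(x)\,\dif x=0$, while $v\mapsto K_b(v-u)(v-u)$ has total variation $O(1)$; hence the Riemann sum differs from this vanishing integral by $O(n^{-1})$, and multiplying by the bounded $\mu'(u)$ keeps it $O(n^{-1})$ uniformly. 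Adding the two contributions gives \reff{DP_cor_biasexp_eq3}, and since \ref{DP_ass_general_s3} forces $\alpha=1$, the argument for \reff{DP_cor_biasexp_eq1} already delivers it at rate $n^{-1}$.

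\textbf{Part (b) and the hard part.} Now \reff{lqdiff_eq1} lets me pass the $L^q$-limit through $\IE$: $|h^{-1}(\mu(u+h)-\mu(u))-\IE\tilde D_0(u)|\le\|h^{-1}(\tilde X_0(u+h)-\tilde X_0(u))-\tilde D_0(u)\|_q\to 0$, so $\mu$ is differentiable with $\mu'(u)=\IE\tilde D_0(u)$, and $\mu'$ is bounded since $\|\tilde D_0(u)\|_q\le\liminf_{h\to 0}\|h^{-1}(\tilde X_0(u+h)-\tilde X_0(u))\|_q\le C_B$ (using $\alpha=1$ in \ref{DP_ass_general_s1}). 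From here I would copy the computation of part (a) verbatim. The one genuinely delicate step — the one I expect to be the main obstacle — is that the Taylor remainder of $\mu$ must be $o(b)$ \emph{uniformly} over $u\in[\tfrac b2,1-\tfrac b2]$; this is not implied by pointwise $L^q$-differentiability alone and needs continuity (hence, on a compact interval, uniform continuity) of $u\mapsto\tilde D_0(u)$, equivalently of $\mu'$ — the one place where the $L^q$ setting genuinely differs from the a.s.\ setting of \ref{DP_ass_general_s3}, as discussed in Remark \ref{remark_biasdet}. A secondary, purely bookkeeping, point is that landing the sharp order $o(b)+O(n^{-1})$ instead of the crude $O(b)$ forces one to keep the first-order cancellation $\int xK(x)\,\dif x=0$ and the bounded-variation estimate of the Riemann-sum error carefully separated.
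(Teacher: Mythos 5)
Your proof is correct and follows essentially the same route as the paper's: \reff{DP_cor_biasexp_eq1} and \reff{DP_cor_biasexp_eq2} from H\"older's inequality together with the $\alpha$-H\"older bounds in \ref{DP_ass_general_s1} and the boundedness of $K$, and for \reff{DP_cor_biasexp_eq3} a first-order Taylor expansion whose linear term is $O(n^{-1})$ by the symmetry and bounded variation of $K$ and whose remainder is $o(b)$ by dominated convergence using $\|\sup_u|\partial_u\tilde X_t(u)|\|_1<\infty$ — the only (immaterial) difference being that you take expectations first and expand the scalar function $\mu(u)=\IE\tilde X_0(u)$, whereas the paper expands the process pathwise and then takes expectations. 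The uniformity caveat you flag in part (b) — that pointwise $L^q$-differentiability alone does not yield the remainder $o(b)$ uniformly in $u$ without (uniform) continuity of $u\mapsto\tilde D_0(u)$, or at least of $\mu'$ — is a fair observation, but the paper's own one-line proof of (b) relies on exactly the same implicit step, so this is not a defect of your argument relative to the reference.
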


\medskip

The proof of \reff{DP_cor_biasexp_eq1} and \reff{DP_cor_biasexp_eq2} follows from the Hoelder inequality and the fact that $K$ has bounded variation and thus is bounded. To prove \reff{DP_cor_biasexp_eq3}, note that
\[
	\tilde X_t(t/n)) = \tilde X_t(u) + \big(\frac{t}{n} -u\big)\cdot \partial_u \tilde X_t(u) + \int_{u}^{t/n} \big\{\partial_u \tilde X_t(s) - \partial_u \tilde X_t(u)\big\} \dif s.
\]
As long as $|\frac{t}{n}-u| \le b$, we have
\[
	\big|\IE \int_{u}^{t/n} \big\{\partial_u \tilde X_t(s) - \partial_u \tilde X_t(u)\big\} \dif s\big| \le b\cdot \sup_{|u-s| \le b}\|\partial_u \tilde X_t(s) - \partial_u \tilde X_t(u)\|_1 = o(b),
\]
since $u \mapsto \partial_u \tilde X_t(u)$ is continuous and $\|\sup_{u}|\partial_u \tilde X_t(u)|\|_1 < \infty$. Finally, because $K$ has bounded variation and is symmetric,
\begin{eqnarray*}
	&& \frac{1}{n}\sum_{t=1}^{n}K_b\big(\frac{t}{n}-u\big)\big\{\IE \tilde X_{t}(t/n) - \IE \tilde X_t(u)\big\}\\
	&=& \frac{1}{n}\sum_{t=1}^{n}K_b\big(\frac{t}{n}-u\big)\cdot \big(\frac{t}{n}-u\big)\cdot \IE[\partial_u \tilde X_t(u)] + o(b) = O(n^{-1}) + o(b).
\end{eqnarray*}
The proof of (b) follows by using
\begin{eqnarray*}
	&& \big|\frac{1}{n}\sum_{t=1}^{n}K_b\big(\frac{t}{n}-u\big)\big\{\IE \tilde X_{t}(t/n) - \IE \tilde X_t(u)\big\}\big|\\
	&\le& \frac{1}{n}\sum_{t=1}^{n}K_b\big(\frac{t}{n}-u\big)\big(\frac{t}{n}-u\big) \big\|\frac{\tilde X_t(t/n) - \tilde X_t(u)}{\frac{t}{n}-u} - \tilde D_t(u)\big\|_1\\
	&&\quad\quad + \frac{1}{n}\sum_{t=1}^{n}K_b\big(\frac{t}{n}-u\big)\big(\frac{t}{n}-u\big)\IE \tilde D_t(u) =  o(b) + O(n^{-1}).
\end{eqnarray*}

\begin{remark}
	Note that in the situation of Proposition \ref{DP_prop_biasexp}, derivative processes were used to get $o(b)$ instead of $O(b)$ in \reff{DP_cor_biasexp_eq2}. Even smaller rates can be obtained by using higher order derivative processes together with higher order kernels.
	If we assume that $u \mapsto \tilde X_t(u)$ has a twice continuously differentiable modification and $K$ is symmetric, we obtain a bias decomposition whose structure is well-known:
	\[
		\frac{1}{nb}\sum_{t=1}^{n}K\Big(\frac{t/n-u}{b}\Big)\big\{\IE X_{t,n} - \IE \tilde X_0(u)\big\} = \int x^2 K(x) \dif x \cdot \IE[\partial_u^2 \tilde X_t(u)]\cdot b^2 + o(b^2) + O(n^{-1}).
	\]
\end{remark}

\begin{remark}[Almost sure v.s. $L^q$ differentiability]\label{remark_biasdet} Let us briefly comment on the different conditions in Proposition \ref{DP_prop_biasexp}(a), (b). In (a) we ask for a.s. differentiability of $u \mapsto \tilde X_0(u)$ while  (b) asks for differentiability in $L^q$ which is weaker. If we want to apply Proposition \ref{DP_prop_biasexp} to $g(Z_{t,n})$ with some function $g \in \sL_r(M,C)$ which is continuously differentiable, then there occur some differences due to the different natures of the conditions:
\begin{itemize}
	\item If Assumption \ref{DP_ass_general}\ref{DP_ass_general_s3} is fulfilled for $q' = q(M+1)$, then Assumption \ref{DP_ass_general}\ref{DP_ass_general_s3} is fulfilled for the process $g(Z_{t,n})$ with $q$ (cf. Proposition \ref{DP_lemma_preservation}(ii) and the comment afterwards) and we obtain \reff{DP_cor_biasexp_eq3} by Proposition \ref{DP_prop_biasexp}(a).
	\item If \reff{lqdiff_eq1} holds, we have to assume additionally that all the derivatives $\partial_j g$ ($j = 1,...,r$) are Hoelder continuous with polynomially growing Hoelder constant, i.e. with some $\gamma > 0$,
	\[
		\sup_{y\not= y'}\frac{|\partial_j g(y) - \partial_j g(y')|}{|y-y'|_1^{\gamma}\cdot (1+|y|_1^{M-1} + |y'|_1^{M-1})} < \infty,
	\]
	to obtain \reff{lqdiff_eq1} for $\tilde X_t^{\circ}(u) = g(\tilde Z_t(u))$ with derivative $D_t^{\circ}(u) = \partial_z g(\tilde Z_t(u))\cdot D_t(u)$.
\end{itemize}
Note that we have to ask $g$ to be slightly more smooth when using differentiability in $L^q$.
\end{remark}

We now prove the stochastic bias expansion. It turns out that we have to bound moments of sums of the upcoming derivative processes $\partial_u \tilde X_t(u)$ which means that we have to pose dependence conditions on $\partial_u \tilde X_t(u)$. This is done via Assumption \ref{DP_ass_general2}\ref{DP_ass_general2_m3}. Using the projection operator $P_{j}\cdot  := \IE[\cdot |\sF_{j}] - \IE[\cdot|\sF_{j-1}]$, we can bound moments of sums of $\partial_u \tilde X_t(u)$ by moments of martingales which can then be bounded with results from \cite{rio2009}. It can be shown (similar to \cite{wu2005}, Theorem 1(i) and (ii)) that for some shift process $W_t = H_t(\sF_t)$ with measurable $H_t$ it holds for $q \ge 1$:
\begin{equation}
	\|P_{t-k}W_t\|_q \le \delta^{W}_q(k).\label{dependence_measure_relation}
\end{equation}

\begin{proposition}[Stochastic bias expansion]\label{DP_prop_biasexp_stoch}
	Suppose that Assumption \ref{DP_ass_general} and \ref{DP_ass_general2}\ref{DP_ass_general2_m3} are fulfilled for some $q \ge 2$. Assume that $K$ is symmetric. Then we have
	\begin{equation}
		\sup_{u \in [\frac{b}{2},1-\frac{b}{2}]}\Big\|\frac{1}{n}\sum_{t=1}^{n}K_b\big(\frac{t}{n}-u\big) \big\{ X_{t,n} - \tilde X_t(u)\big\}\Big\|_q = o(b) + O(n^{-1}).\label{DP_cor_biasexp_eq5}
	\end{equation}
\end{proposition}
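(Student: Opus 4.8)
The plan is to split $X_{t,n}-\tilde X_t(u)$ into the nonstationary gap $X_{t,n}-\tilde X_t(t/n)$ plus the first-order Taylor expansion of the stationary approximation about $u$, exploiting that the linear term carries the factor $\tfrac tn-u$, which is of size $O(b)$ on the support of $K_b(\cdot-u)$.

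First I would use, a.s.\ and for $u\in[\tfrac b2,1-\tfrac b2]$ and $t$ with $K_b(\tfrac tn-u)\neq0$ (so that the segment from $u$ to $t/n$ lies in $[0,1]$),
\[
X_{t,n}-\tilde X_t(u)=\big(X_{t,n}-\tilde X_t(\tfrac tn)\big)+\big(\tfrac tn-u\big)\,\partial_u\tilde X_t(u)+R_{t,n},\qquad R_{t,n}:=\int_u^{t/n}\big\{\partial_u\tilde X_t(s)-\partial_u\tilde X_t(u)\big\}\,\dif s,
\]
which is legitimate by Assumption \ref{DP_ass_general}\ref{DP_ass_general_s3} (a.s.\ $C^1$ dependence on $u$, hence $\alpha=1$) and the fundamental theorem of calculus, cf.\ Proposition \ref{DP_cor_taylor_exp}. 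Multiplying by $\tfrac1n K_b(\tfrac tn-u)$ and summing over $t$ produces three contributions. Since $\tfrac1n\sum_{t=1}^n|K_b(\tfrac tn-u)|\le|K|_\infty+B_K/(nb)=O(1)$ uniformly in $u$, the $X_{t,n}-\tilde X_t(\tfrac tn)$ contribution is $O(n^{-1})$ in $L^q$ by \ref{DP_ass_general_s1} with $\alpha=1$. For the $R_{t,n}$ contribution I would bound $|R_{t,n}|\le\tfrac b2\,g_b$ on the support, where $g_b:=\sup_{u\in[0,1]}\sup_{|s-u|\le b}|\partial_u\tilde X_t(s)-\partial_u\tilde X_t(u)|$ is independent of $t$ by stationarity; a.s.\ uniform continuity of $u\mapsto\partial_u\tilde X_t(u)$ on $[0,1]$ and the dominating function $2\sup_u|\partial_u\tilde X_t(u)|\in L^q$ from \ref{DP_ass_general_s3} yield $\|g_b\|_q\to0$, so this contribution is $o(b)$ uniformly.

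The main work is the linear term. I would center it, $\partial_u\tilde X_t(u)=\IE\partial_u\tilde X_0(u)+Y_t(u)$ with $Y_t(u)$ stationary, centered, and $\sF_t$-measurable, and treat the two pieces separately. Because $x\mapsto K_b(x-u)(x-u)=\phi(\tfrac{x-u}{b})$ with $\phi(y):=yK(y)$ has bounded variation and, by symmetry of $K$, integrates to $b\int yK(y)\,\dif y=0$ over $[0,1]$ whenever $u\in[\tfrac b2,1-\tfrac b2]$, the deterministic piece $\IE\partial_u\tilde X_0(u)\cdot\tfrac1n\sum_t K_b(\tfrac tn-u)(\tfrac tn-u)$ is just a Riemann-sum error and hence $O(n^{-1})$ uniformly, using $|\IE\partial_u\tilde X_0(u)|\le\|\sup_u|\partial_u\tilde X_0(u)|\|_1<\infty$. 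For the stochastic piece I would write $Y_t(u)=\sum_{k\ge0}P_{t-k}Y_t(u)$ (valid in $L^q$ since $\sum_k\delta_q^{\partial\tilde X}(k)<\infty$), observe that for each $k$ the finite array $\{\tfrac1n K_b(\tfrac tn-u)(\tfrac tn-u)P_{t-k}Y_t(u)\}_t$ is a martingale difference sequence relative to $(\sF_{t-k})_t$, and combine Burkholder's / Rio's martingale moment inequality, Minkowski's inequality and \reff{dependence_measure_relation} to obtain
\[
\Big\|\tfrac1n\sum_{t=1}^n K_b(\tfrac tn-u)(\tfrac tn-u)Y_t(u)\Big\|_q\le\frac{C_q}{n}\Big(\sum_{t=1}^n K_b(\tfrac tn-u)^2(\tfrac tn-u)^2\Big)^{1/2}\sum_{k\ge0}\delta_q^{\partial\tilde X}(k).
\]
At most $nb+1$ summands in the variance sum are nonzero and each is at most $(|K|_\infty/b)^2(b/2)^2=|K|_\infty^2/4$, so the right-hand side is $O(\sqrt{b/n})=O(b/\sqrt{nb})=o(b)$ under the standard bandwidth conditions $b\to0$, $nb\to\infty$, again uniformly in $u$.

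Collecting the three estimates gives \reff{DP_cor_biasexp_eq5}, since every bound is independent of $u$. The delicate point is the stochastic piece of the linear term: a crude bound on $\tfrac1n\sum_t K_b(\tfrac tn-u)Y_t(u)$ would only be $O((nb)^{-1/2})$, which is not $o(b)$ in general, and it is precisely the extra weight $\tfrac tn-u=O(b)$ on the support that buys a factor $b^2$ in the martingale variance sum and sharpens the estimate to $O(\sqrt{b/n})=o(b)$; symmetry of $K$ plays the analogous role for the deterministic piece, turning an $O(b)$ term into $O(n^{-1})$.
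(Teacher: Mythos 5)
Your proof is correct and follows essentially the same route as the paper's: the same split into the nonstationary gap, the Taylor remainder, and the weighted linear term, with the deterministic piece killed by symmetry of $K$ and the centered piece controlled via the projection decomposition, the martingale moment inequality of Rio, and \reff{dependence_measure_relation}. Your explicit count of the nonzero summands giving $O(\sqrt{b/n})=o(b)$ matches (and slightly cleans up) the paper's final estimate.
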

\begin{proof}[Proof of Proposition \ref{DP_prop_biasexp_stoch}:] To prove \reff{DP_cor_biasexp_eq5}, we can show similarly as in \reff{DP_cor_biasexp_eq1} that
\[
	\frac{1}{n}\sum_{t=1}^{n}|K_b\big(\frac{t}{n}-u\big)|\cdot \| X_{t,n} - \tilde X_t(t/n)\|_q = O(n^{-1}).
\]
To deal with $\frac{1}{n}\big\|\sum_{t=1}^{n}K_b\big(\frac{t}{n}-u\big) (\tilde X_{t}(t/n) - \tilde X_t(u))\big\|_q$, we use the expansion \reff{DP_taylorexpansion2} together with the result $\|R_n\|_q = o(b)$ from Corollary \ref{DP_cor_taylor_sums}. It therefore remains to analyze
\[
	\frac{1}{n}\big\|\sum_{t=1}^{n}K_b\big(\frac{t}{n}-u\big) \big(\frac{t}{n}-u\big)\cdot \big\{\partial_u \tilde X_t(u) - \IE \partial_u \tilde X_t(u)\big\}\big\|_q + \frac{1}{n}\sum_{t=1}^{n}K_b\big(\frac{t}{n}-u\big) \big(\frac{t}{n}-u\big)\cdot \IE \partial_u \tilde X_t(u).
\]
While the second term is $O(n^{-1})$ by stationarity, the first term is bounded by
\begin{eqnarray*}
	&&\frac{1}{n}\sum_{k=0}^{\infty}\big\|\sum_{t=1}^{n}K_b\big(\frac{t}{n}-u\big) \big(\frac{t}{n}-u\big) P_{t-k}\partial_u \tilde X_t(u)\big\|_q\\
	&\le& \frac{q^{1/2}}{nb}\sum_{k=0}^{\infty}\Big(\sum_{t=1}^{n}\big(K_b\big(\frac{t}{n}-u\big) \cdot \big(\frac{t}{n}-u\big)\big)^2 \|P_{t-k} \partial_u \tilde X_t(u)\|_q^2\Big)^{1/2}\\
	&\le& \frac{q^{1/2}|K|_{\infty}b^{1/2}}{n}\sum_{k=0}^{\infty}\delta_q^{\partial_u \tilde X(u)}(k) = \frac{q^{1/2}|K|_{\infty} b^{1/2}}{n}\Delta_{0,q}^{\tilde X}
\end{eqnarray*}
by Theorem 2.1 in \cite{rio2009} and \reff{dependence_measure_relation}.
\end{proof}

The main advantage of a stochastic bias expansion is that we can reduce a sum over locally stationary processes to a sum over stationary processes by keeping the terms stochastic. This allows for instance to apply large deviation results for stochastic processes which usually have a simpler and more closed form.

\begin{remark}[An application of the stochastic bias expansion: Deviation inequalities]
	Suppose that the assumptions of Proposition \ref{DP_prop_biasexp_stoch} are fulfilled with $q = 2$. Assume that $\IE \tilde X_t(u) = 0$. Then we have for $\gamma > 0$,
	\begin{eqnarray*}
		\IP\Big(\Big|\frac{1}{n}\sum_{t=1}^{n}K_b(u-t/n)X_{t,n}\Big| > \gamma\Big) &\le& \IP\Big(\Big|\frac{1}{n}\sum_{t=1}^{n}K_b(u-t/n) \tilde X_t(u)\Big| > \frac{\gamma}{2}\Big)\\
		&&\quad + \frac{1}{(\gamma/2)^2}\Big\|\frac{1}{n}\sum_{t=1}^{n}K_b(u-t/n) (X_{t,n} - \tilde X_t(u))\Big\|_2^2.
	\end{eqnarray*}
	By Proposition \ref{DP_prop_biasexp_stoch}, the second term on the right hand side is $o(b) + O(n^{-1})$, i.e. has bias order. For the first term, one can use deviation results for stationary processes.  
\end{remark}


\subsection{Differentiability of functionals}

The existence of derivative processes allows an expansion of the corresponding mean $\IE g(Z_{t,n})$ into the mean of the corresponding stationary version $\IE g(\tilde Z_t(u))$. This can be applied to various functionals such as expectations, covariances, the Wigner-Ville spectrum and the distribution function. The following result is an immediate Corollary from Lemma \ref{DP_lemma_preservation} applied to some $g \in \sL_r(M,C)$. Recall $Z_{t,n} = (X_{t,n},...,X_{t-r+1,n})'$ and $\tilde Z_t(u) = (\tilde X_t(u),...,\tilde X_{t-r+1}(u))'$.

\begin{proposition}\label{DP_general_mean_diff}
	Assume that $g \in \sL_r(M,C)$. Suppose that Assumption \ref{DP_ass_general}\ref{DP_ass_general_s1} is fulfilled for some $1 \ge \alpha > 0$ and $q = M+1$. Then we have uniformly for $t = 1,...,n$:
	\begin{equation}
		\IE g(Z_{t,n}) = \IE g\big(\tilde Z_t\big(\frac{t}{n}\big)\big) + O(n^{-\alpha}) = \IE g(\tilde Z_t(u)) + O\big(n^{-\alpha} + \big|\frac{t}{n}-u\big|^{\alpha}\big).\label{DP_general_mean_diff_eq1}
	\end{equation}
	If additionally Assumption \ref{DP_ass_general}\ref{DP_ass_general_s3} is fulfilled and $g$ is continuously differentiable with partial derivatives $\partial_j g \in \sL_{r}(M-1,C')$, $j = 1,...,r$, then $u \mapsto \IE g(\tilde Z_t(u))$ is continuously differentiable with derivative
	\begin{equation}
		\partial_u  \IE g(\tilde Z_t(u))  = \sum_{j=1}^{r}\IE[\partial_j g(\tilde X_{t}(u),...,\tilde X_{t-r+1}(u))\cdot \partial_u \tilde X_{t-j+1}(u)].\label{DP_general_mean_diff_eq2}
	\end{equation}
\end{proposition}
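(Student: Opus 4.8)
The plan is to obtain both displays directly from the H\"older-inequality estimates that already prove Proposition~\ref{DP_lemma_preservation}; in fact \reff{DP_general_mean_diff_eq1} uses only the (S1)-type part of that proof and \reff{DP_general_mean_diff_eq2} only the (S3)-type part, with the mixing ingredients stripped out, since no dependence measure enters the statement here.

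For \reff{DP_general_mean_diff_eq1} I would first record the polynomial growth bound $|g(y)|\le|g(0)|+C|y|_1(1+|y|_1^{M})$ coming from $g\in\sL_r(M,C)$ (apply \reff{DP_hoelder_prop_polynom} with $y'=0$); since $q=M+1$, Assumption~\ref{DP_ass_general}\ref{DP_ass_general_s1} gives $\sup_{t,u}\|\tilde X_t(u)\|_{M+1}<\infty$ and hence $g(\tilde Z_t(u)),g(Z_{t,n})\in L^1$. Then, writing $\tilde Z_t(u)=(\tilde X_t(u),\dots,\tilde X_{t-r+1}(u))'$ and combining \reff{DP_hoelder_prop_polynom} with H\"older's inequality for the conjugate exponents $M+1$ and $(M+1)/M$,
\[
\big\|g(\tilde Z_t(u))-g(\tilde Z_t(v))\big\|_1\le C\,\big\||\tilde Z_t(u)-\tilde Z_t(v)|_1\big\|_{M+1}\cdot\big\|1+|\tilde Z_t(u)|_1^{M}+|\tilde Z_t(v)|_1^{M}\big\|_{(M+1)/M},
\]
where the first factor is $\le rC_B|u-v|^{\alpha}$ by \ref{DP_ass_general_s1} with $q=M+1$, and the second factor is bounded by a constant depending only on $r,M$ and $\sup_{t,u}\|\tilde X_t(u)\|_{M+1}$. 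The same estimate, with $\tilde Z_t(t/n)$ in place of $\tilde Z_t(v)$ and using in addition $\|\tilde X_{t-j+1}((t-j+1)/n)-\tilde X_{t-j+1}(t/n)\|_{M+1}\le C_B(r/n)^{\alpha}$, gives $\|g(Z_{t,n})-g(\tilde Z_t(t/n))\|_1=O(n^{-\alpha})$ uniformly in $t$. Passing from these $L^1$-bounds to the claimed bounds on the expectations is then just Jensen's inequality $|\IE W|\le\|W\|_1$.

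For \reff{DP_general_mean_diff_eq2} I would use that $g\in C^1$ and, under \ref{DP_ass_general_s3}, $u\mapsto\tilde Z_t(u)$ is a.s.\ continuously differentiable, so the chain rule gives that $u\mapsto g(\tilde Z_t(u))$ is a.s.\ $C^1$ with $\partial_u g(\tilde Z_t(u))=\sum_{j=1}^{r}\partial_j g(\tilde Z_t(u))\,\partial_u\tilde X_{t-j+1}(u)$. To differentiate under the expectation I need a uniform-in-$u$, $L^1$ majorant of this derivative: from $\partial_j g\in\sL_r(M-1,C')$ one gets $|\partial_j g(z)|\le c(1+|z|_1^{M})$, while \ref{DP_ass_general_s1} and \ref{DP_ass_general_s3} together give $\|\sup_u|\tilde X_t(u)|\|_{M+1}<\infty$ (via $\sup_u|\tilde X_t(u)|\le|\tilde X_t(0)|+\sup_u|\partial_u\tilde X_t(u)|$), so H\"older with exponents $(M+1)/M$ and $M+1$ yields
\[
\IE\Big[\sup_{u\in[0,1]}|\partial_u g(\tilde Z_t(u))|\Big]\le\sum_{j=1}^{r}c\,\big\|1+\sup_u|\tilde Z_t(u)|_1^{M}\big\|_{(M+1)/M}\cdot\big\|\sup_u|\partial_u\tilde X_{t-j+1}(u)|\big\|_{M+1}<\infty.
\]
(This is exactly the bound noted right after Proposition~\ref{DP_lemma_preservation}.) With this majorant in hand, the mean value theorem controls the difference quotients $h^{-1}(g(\tilde Z_t(u+h))-g(\tilde Z_t(u)))$ by $\sup_v|\partial_u g(\tilde Z_t(v))|\in L^1$, and since they converge a.s.\ to $\partial_u g(\tilde Z_t(u))$ as $h\to0$, dominated convergence shows that $u\mapsto\IE g(\tilde Z_t(u))$ is differentiable with derivative $\IE[\partial_u g(\tilde Z_t(u))]$, which is \reff{DP_general_mean_diff_eq2}; a further application of dominated convergence, using that $u\mapsto\partial_u g(\tilde Z_t(u))$ is a.s.\ continuous and dominated, shows this derivative is continuous, so $u\mapsto\IE g(\tilde Z_t(u))$ is $C^1$.

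The only step that is not purely mechanical is the majorization: one has to notice that although the hypotheses list only \ref{DP_ass_general_s1} and \ref{DP_ass_general_s3} (and not \ref{DP_ass_general_s2}), the moment bound $\|\sup_u|\tilde X_t(u)|\|_{M+1}<\infty$ needed to integrate the product $\partial_j g(\tilde Z_t(u))\,\partial_u\tilde X_{t-j+1}(u)$ is already a consequence of \ref{DP_ass_general_s1} and \ref{DP_ass_general_s3}, and that the H\"older exponents $M+1$ and $(M+1)/M$ line up exactly with the available moment order $q=M+1$ and the polynomial degree $M$ appearing in $\sL_r(M-1,C')$. Everything else is bookkeeping with H\"older's inequality and standard dominated-convergence arguments.
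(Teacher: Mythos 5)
Your proposal is correct and follows the route the paper intends: the paper declares the result an immediate corollary of Proposition~\ref{DP_lemma_preservation}, whose (omitted) proof is exactly the H\"older-inequality bookkeeping you carry out, and your differentiation under the expectation via an $L^1$ majorant plus dominated convergence is the standard completion of \reff{DP_general_mean_diff_eq2}. Your observation that $\|\sup_u|\tilde X_t(u)|\|_{M+1}<\infty$ already follows from \ref{DP_ass_general_s1} and \ref{DP_ass_general_s3} without invoking \ref{DP_ass_general_s2} is a correct and worthwhile clarification of a point the paper glosses over.
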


The result of Proposition \ref{DP_general_mean_diff} enables us to get expansions of the mean, the covariance and the distribution function of $X_{t,n}$. Suppose in the following that Assumption \ref{DP_ass_general} holds for some $q \ge M+1$.

\begin{corollary}[Mean expansion, $M = 0$]\label{DP_cor_mean_diff}
	Choosing $g:\IR \to \IR, g(y) = y$ yields
	\[
		\IE X_{t,n} = \IE \tilde X_t(t/n) + O(n^{-1}),
	\]
	where $\mu(u) := \IE \tilde X_0(u)$ is continuously differentiable with derivative $\partial_u\mu(u) = \IE \partial_u \tilde X_0(u)$.
\end{corollary}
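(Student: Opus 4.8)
The plan is to read off Corollary~\ref{DP_cor_mean_diff} directly from Proposition~\ref{DP_general_mean_diff} by specializing to $r = 1$ and the identity map $g:\IR \to \IR$, $g(y) = y$. First I would check the membership $g \in \sL_1(0,1)$: since $|g(y) - g(y')| = |y - y'| = |y-y'|_1$ and the factor $(1 + |y|_1^0 + |y'|_1^0) = 3 \ge 1$, the supremum in \reff{DP_hoelder_prop_polynom} is at most $1$, so indeed $g \in \sL_1(0,1)$ with $M = 0$. Under the standing hypothesis that Assumption~\ref{DP_ass_general} holds for some $q \ge M+1 = 1$, Assumption~\ref{DP_ass_general}\ref{DP_ass_general_s1} holds with $q = M+1 = 1$ as required by Proposition~\ref{DP_general_mean_diff}, so the first display of \reff{DP_general_mean_diff_eq1} gives $\IE X_{t,n} = \IE\tilde X_t(t/n) + O(n^{-\alpha})$, uniformly in $t$; here $\alpha = 1$ because Assumption~\ref{DP_ass_general}\ref{DP_ass_general_s1} in force (via Assumption~\ref{DP_ass_general} as a whole) already presumes the relevant $\alpha$, and for the differentiability part \ref{DP_ass_general_s3} forces $\alpha = 1$ anyway, yielding the claimed $O(n^{-1})$.

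Next I would address the differentiability claim. The identity map is trivially continuously differentiable with $\partial_1 g \equiv 1$, and the constant function $1$ lies in $\sL_1(M-1, C') = \sL_1(-1, C')$ in the sense needed: the condition $\partial_j g \in \sL_r(M-1,C')$ is, as the authors note after Proposition~\ref{DP_lemma_preservation}, only invoked to control the dependence measure \ref{DP_ass_general2_m3}, and here one really just needs the bound $|\partial_1 g(y)| \le C(1 + |y|_1^M) = C$, which holds with $C = 1$. Hence, provided Assumption~\ref{DP_ass_general}\ref{DP_ass_general_s3} holds, the second part of Proposition~\ref{DP_general_mean_diff} applies with $r = 1$ and gives that $u \mapsto \IE g(\tilde Z_t(u)) = \IE \tilde X_t(u)$ is continuously differentiable with derivative $\partial_u \IE \tilde X_t(u) = \IE[\partial_1 g(\tilde X_t(u)) \cdot \partial_u \tilde X_t(u)] = \IE[\partial_u \tilde X_t(u)]$. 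By stationarity this equals $\IE \partial_u \tilde X_0(u)$, and likewise $\IE\tilde X_t(u) = \IE\tilde X_0(u) =: \mu(u)$, which is the asserted formula $\partial_u \mu(u) = \IE\partial_u \tilde X_0(u)$.

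There is essentially no obstacle here; the only point requiring a word of care is the slight abuse in writing $\partial_j g \in \sL_r(M-1,C')$ when $M = 0$, since $\sL_r$ is defined in Definition~\ref{DP_hoelder_def} only for $M \ge 0$. As just noted, this is harmless: the proof of Proposition~\ref{DP_general_mean_diff} (inherited from Proposition~\ref{DP_lemma_preservation}) uses that hypothesis solely to obtain the polynomial growth bound on $\partial_j g$ and the $L^q$-boundedness of $\sup_u |\partial_u g(\tilde Z_t(u))|$, both of which are immediate for the constant derivative $\partial_1 g \equiv 1$. Thus the statement follows, and the proof can be given in a couple of lines by citing Proposition~\ref{DP_general_mean_diff} with the above specialization.
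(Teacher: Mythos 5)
Your proposal is correct and follows exactly the route the paper intends: the corollary is stated as an immediate specialization of Proposition~\ref{DP_general_mean_diff} to $r=1$ and $g(y)=y$, under the standing hypothesis that Assumption~\ref{DP_ass_general} holds with $q \ge M+1$ (so $\alpha = 1$ via \ref{DP_ass_general_s3}). Your remarks on checking $g \in \sL_1(0,1)$ and on the harmless edge case $\partial_1 g \in \sL_1(M-1,C')$ with $M=0$ are accurate and consistent with the authors' comment after Proposition~\ref{DP_lemma_preservation}.
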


\begin{corollary}[Covariance expansion, $M = 1$]\label{DP_cor_cov_diff} Fix $r > 0$. 
	Define the covariances $\gamma(u,r) := \Cov(\tilde X_t(u),\tilde X_{t-r}(u))$. Choosing $g:\IR^{r+1} \to \IR, g(y) = y_1y_{r+1}$, we obtain uniformly for $t = 1,...,n$:
	\begin{equation}
		\gamma_{t,n}(r) := \Cov(X_{t,n},X_{t-r,n}) = \gamma(\frac{t}{n},r) + O(n^{-1})\label{DP_cor_cov_diff_eq1}
	\end{equation}
	and $\gamma(u,r)$ is continuously differentiable with derivative
	\[
		\partial_u \gamma(u,r) = \Cov(\partial_u \tilde X_0(u), \tilde X_r(u)) + \Cov(\tilde X_0(u), \partial_u \tilde X_r(u)).
	\]
\end{corollary}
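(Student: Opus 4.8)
The statement is an immediate consequence of Proposition~\ref{DP_general_mean_diff} applied to the bilinear map $g:\IR^{r+1}\to\IR$, $g(y)=y_1 y_{r+1}$; with the $(r+1)$-dimensional lag vector $Z_{t,n}=(X_{t,n},\dots,X_{t-r,n})'$ this gives $g(Z_{t,n})=X_{t,n}X_{t-r,n}$. First I would record the membership conditions: from $y_1y_{r+1}-y_1'y_{r+1}'=(y_1-y_1')y_{r+1}+y_1'(y_{r+1}-y_{r+1}')$ one gets $|g(y)-g(y')|\le|y-y'|_1(|y|_1+|y'|_1)\le|y-y'|_1(1+|y|_1+|y'|_1)$, so $g\in\sL_{r+1}(1,1)$, i.e. $M=1$; and the partial derivatives $\partial_1 g(y)=y_{r+1}$, $\partial_{r+1}g(y)=y_1$, $\partial_j g\equiv 0$ otherwise, are linear, hence lie in $\sL_{r+1}(0,C')$ for a suitable $C'$, which is exactly the condition $\partial_j g\in\sL_{r+1}(M-1,C')$. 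Thus the hypotheses of Proposition~\ref{DP_general_mean_diff} hold with $M=1$, $q=M+1=2$, under Assumption~\ref{DP_ass_general} (which forces $\alpha=1$ through \ref{DP_ass_general_s3}).

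To obtain \reff{DP_cor_cov_diff_eq1}, split $\gamma_{t,n}(r)=\IE[X_{t,n}X_{t-r,n}]-\IE X_{t,n}\cdot\IE X_{t-r,n}$. The first part of Proposition~\ref{DP_general_mean_diff} yields $\IE[X_{t,n}X_{t-r,n}]=\IE\big[\tilde X_t(t/n)\tilde X_{t-r}(t/n)\big]+O(n^{-1})$ uniformly in $t$. For the product of means, Corollary~\ref{DP_cor_mean_diff} gives $\IE X_{t,n}=\mu(t/n)+O(n^{-1})$ and $\IE X_{t-r,n}=\mu\big((t-r)/n\big)+O(n^{-1})$; since $|\mu((t-r)/n)-\mu(t/n)|\le C_B\, r/n$ by Assumption~\ref{DP_ass_general}\ref{DP_ass_general_s1} with $\alpha=1$, and $\mu$ is bounded (again by \ref{DP_ass_general_s1}), the product equals $\mu(t/n)^2+O(n^{-1})$. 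Because $(\tilde X_t(u))_{t}$ is stationary, $\IE[\tilde X_t(u)\tilde X_{t-r}(u)]-\mu(u)^2=\gamma(u,r)$ does not depend on $t$, so combining the two displays gives $\gamma_{t,n}(r)=\gamma(t/n,r)+O(n^{-1})$.

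For the derivative, the second part of Proposition~\ref{DP_general_mean_diff} together with \reff{DP_general_mean_diff_eq2} shows that $u\mapsto\IE[\tilde X_t(u)\tilde X_{t-r}(u)]$ is $C^1$ with derivative $\IE[\tilde X_{t-r}(u)\,\partial_u\tilde X_t(u)]+\IE[\tilde X_t(u)\,\partial_u\tilde X_{t-r}(u)]$, the two summands coming from $\partial_1 g$ and $\partial_{r+1}g$. Subtracting $\partial_u(\mu(u)^2)=2\mu(u)\,\IE\partial_u\tilde X_0(u)$ (Corollary~\ref{DP_cor_mean_diff}) and re-indexing the two cross terms via stationarity, namely $\IE[\tilde X_{t-r}(u)\partial_u\tilde X_t(u)]=\IE[\tilde X_0(u)\partial_u\tilde X_r(u)]$ and $\IE[\tilde X_t(u)\partial_u\tilde X_{t-r}(u)]=\IE[\partial_u\tilde X_0(u)\tilde X_r(u)]$, regroups $\partial_u\gamma(u,r)$ exactly into $\Cov(\partial_u\tilde X_0(u),\tilde X_r(u))+\Cov(\tilde X_0(u),\partial_u\tilde X_r(u))$.

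There is no genuine obstacle here: once $g(y)=y_1y_{r+1}$ is identified, the argument is pure bookkeeping. The only points demanding care are keeping a single anchor point $u=t/n$ throughout, so that shifting the time argument from $(t-r)/n$ to $t/n$ costs only $O(n^{-1})$ through the Hölder bound in \ref{DP_ass_general_s1}; making sure the mean corrections stay $O(n^{-1})$ by using boundedness of $\mu$; and the short stationarity re-indexing that turns the raw derivative formula \reff{DP_general_mean_diff_eq2} into the symmetric covariance form stated in the corollary.
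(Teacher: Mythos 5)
Your proposal is correct and follows exactly the route the paper intends: the corollary is stated as an immediate consequence of Proposition~\ref{DP_general_mean_diff} applied to $g(y)=y_1y_{r+1}$ (with $g\in\sL_{r+1}(1,1)$ and linear partial derivatives in $\sL_{r+1}(0,C')$), and your bookkeeping — subtracting the product of means via Corollary~\ref{DP_cor_mean_diff}, absorbing the shift from $(t-r)/n$ to $t/n$ into the $O(n^{-1})$ term, and re-indexing by stationarity to obtain the symmetric covariance form of $\partial_u\gamma(u,r)$ — is exactly the omitted verification. No gaps.
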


Similar expansions can be derived for higher-order cumulants and also for the Wigner-Ville spectrum (cf. \cite{martin1985}).

As a last application of Proposition \ref{DP_general_mean_diff}, we present an expansion of the distribution function of $X_{t,n}$ which may also be used to approximate quantiles of locally stationary processes.

\begin{example}[Expansion of the distribution function]\label{DP_cor_dist_diff}
	Suppose that Assumption \ref{DP_ass_general} holds with $q = 1$. Assume that the i.i.d. random variables $\varepsilon_t$, $t\in\IZ$ have a Lipschitz continuous and continuously differentiable distribution function $F_{\varepsilon}$ with Lipschitz constant $L_{\varepsilon}$ and derivative $f_{\varepsilon}$. Let the processes $X_{t,n}$ and $\tilde X_t(u)$ obey the recursion equations \reff{DP_rek_1} and \reff{DP_rek_2}.
	
	Assume that $(\varepsilon,y,u) \mapsto G_{\varepsilon}(y,u)$ is continuously differentiable and that the derivative $\partial_{\varepsilon}G_{\varepsilon}(y,u) \ge \delta_G > 0$ is uniformly bounded from below by some positive constant $\delta_G > 0$.
	 By the inverse function theorem we know that there exists a continuously differentiable inverse $x \mapsto H(x,y,u)$ of $\varepsilon \mapsto G_{\varepsilon}(y,u)$. Finally, assume that for all $x\in\IR$, the expressions
	\[
		C(x) := \sup_{u\in[0,1]}\sup_{y\not=y'}\frac{|H(x,y,u)-H(x,y',u)|}{|y-y'|_1}
	\]
	are finite.
	
	Put $Y_{t-1,n} = (X_{t-1,n},...,X_{t-p,n})'$, $\tilde Y_{t-1}(u) = (\tilde X_{t-1}(u),...,\tilde X_{t-p}(u))'$. In this situation it holds that the distribution function of $X_{t,n}$,
	\[
		F_{X_{t,n}}(x) = \IE\big[\IP(G_{\varepsilon_t}(Y_{t-1,n},t/n) \le x|\sF_{t-1})\big] = \IE\big[F_{\varepsilon}(H(x,Y_{t-1,n},t/n))\big]
	\]
	can be approximated by the distribution function $F_{\tilde X_t(u)}(x) :=  \IP(\tilde X_t(u) \le x)$ by
	\begin{align*}
		& |F_{X_{t,n}}(x) - F_{\tilde X_t(t/n)}(x)|\\
		&\le L_{\varepsilon}\|H(x,Y_{t-1,n},t/n) - H(x,\tilde Y_{t-1}(t/n),t/n)\|_1\\
		&\le L_{\varepsilon} C(x) \sum_{j=1}^{p}\|X_{t-j-1,n} - \tilde X_{t-j-1}(t/n)\|_1 \le p C_B L_{\varepsilon}\cdot C(x) \cdot n^{-1}
	\end{align*}
	Furthermore $u \mapsto F_{\tilde X_t(u)}(x)$ is differentiable with derivative
	\begin{align*}
		&\partial_u F_{\tilde X_t(u)}(x)\\
		&= \IE\big[ f_{\varepsilon}(H(x,\tilde Y_{t-1}(u),u))\cdot \big(\langle \partial_2 H(x,\tilde Y_{t-1}(u),u), \partial_u \tilde Y_{t-1}(u)\rangle + \partial_3 H(x,\tilde Y_{t-1}(u),u)\big)\big].
	\end{align*}
\end{example}

\section{Nonlinear locally stationary processes}
\label{DP_section2}


In this section we show in a sequence of theorems that the Markov processes given by \reff{DP_rek_1} and \reff{DP_rek_2} fulfill Assumption \ref{DP_ass_general} and the mixing conditions of Assumption \ref{DP_ass_general2}. 
Furthermore we prove that the derivative process can be obtained as the solution of a functional equation. The existence of these processes and their properties have previously been derived for tvAR models (cf. \cite{dahlhaus2012}), tvARCH models (cf. \cite{suhasini2006}) and random coefficient models (cf. \cite{suhasini2006b}). The situation in the present case is however different since the process \reff{DP_rek_1} is only defined by a recursion and the explicit solution is usually not available for the calculations. To prove the results, we state the following elementary assumptions on the recursion function $G_{\varepsilon}(y,u)$. Let $\partial_1 G$, $\partial_2 G$ denote the derivatives of $G$ w.r.t. $y$ and $u$, respectively.

\begin{assumption}\label{DP_ass1}
	In the model \reff{DP_rek_1}, \reff{DP_rek_2} we assume with $Y_{t-1,n} = (X_{t-1,n},...,X_{t-p,n})'$ and $\tilde Y_{t-1}(u) = (\tilde X_{t-1}(u),...,\tilde X_{t-p}(u))'$ that there exists $q > 0$, $\chi = (\chi_1,...,\chi_p) \in \IR^p_{\ge 0}$ with $|\chi|_1 = \sum_{i=1}^{p}\chi_i < 1$ and $y_0 \in \IR^p$ such that with $q' := \min\{q,1\}$:
	\begin{enumerate}[label=(L\arabic*),ref=(L\arabic*)]
		\item\label{DP_ass1_l1} $\sup_{u\in[0,1]}\|G_{\varepsilon_0}(y_0,u)\|_q < \infty$, and (with $|z|_{\chi,q'} := \big(\sum_{i=1}^{p}|z_i|^{q'}\cdot \chi_i\big)^{1/q'}$ the weighted $q'$-norm)
		\begin{equation}
	\sup_{u\in [0,1]}\sup_{y\not=y'}\frac{\|G_{\varepsilon_0}(y,u) - G_{\varepsilon_0}(y',u)\|_q}{|y-y'|_{\chi,q'}} \le 1.\label{DP_cond_1}
\end{equation}
		\item\label{DP_ass1_l2} $(y,u) \mapsto G_{\varepsilon}(y,u)$ is continuous for all $\varepsilon$, $\| \sup_{u\in[0,1]}|G_{\varepsilon_0}(y_0,u)|\ \|_q < \infty$, and
		\begin{equation}
			\Big\| \sup_{u\in [0,1]}\sup_{y\not=y'}\frac{|G_{\varepsilon_0}(y,u) - G_{\varepsilon_0}(y',u)|}{|y-y'|_{\chi,q'}}\Big\|_{q} \le 1.\label{DP_continuous_cond}
		\end{equation}
		\item\label{DP_ass1_l3} $(y,u) \mapsto G_{\varepsilon}(y,u)$ is continuously differentiable for all $\varepsilon$, $\| \sup_{u\in[0,1]}|\partial_2 G_{\varepsilon_0}(y_0,u)|\ \|_q < \infty$, and
		\begin{equation}
	C_i := \Big\|\sup_{u\in[0,1]}\sup_{y\not=y'}\frac{|\partial_i G_{\varepsilon_0}(y,u) - \partial_i G_{\varepsilon_0}(y',u)|_1}{|y-y'|_{1,q'}}\Big\|_{q} < \infty, \quad i = 1,2.\label{DP_cond_9}
\end{equation}
Furthermore, assume that either (a) \reff{DP_continuous_cond} holds for $q/2$ instead of $q$ or (b) $y \mapsto \partial_1 G_{\varepsilon}(y,u)$ is constant for all $\varepsilon,u$.
		\item\label{DP_ass1_l4} For some $0 < \alpha \le 1$, it holds that
		\begin{equation}
		 C:= \sup_{u\in [0,1]}\|C(\tilde Y_t(u))\|_q < \infty, \quad\mbox{ where }\quad C(y) := \sup_{u\not=u'}\frac{\|G_{\varepsilon_0}(y,u) - G_{\varepsilon_0}(y,u')\|_q}{|u-u'|^{\alpha}}. \label{DP_cond_4}
	\end{equation}
	\end{enumerate}
\end{assumption}

Let us briefly discuss the conditions in Assumption \ref{DP_ass1}.
\begin{remark}
\begin{itemize}
	\item[(i)] Note that \ref{DP_ass1_l1}-\ref{DP_ass1_l3} impose increasingly strong smoothness assumptions on the recursion function $G_{\varepsilon}(y,u)$. While \ref{DP_ass1_l1}-\ref{DP_ass1_l3} are directly verifiable, \ref{DP_ass1_l4} includes conditions on the stationary approximation $\tilde X_t(u)$. Note that the upcoming theorems also state properties of $\tilde X_t(u)$. Their results can be used to verify \ref{DP_ass1_l4}.
	\item[(ii)] The condition \ref{DP_ass1_l2} means that the mapping $y \mapsto G_{\varepsilon}(y,u)$ can be viewed as a contraction in the space of continuous functions $C[0,1]$ which in turn implies the a.s. continuity of the limit. \ref{DP_ass1_l3} is necessary to ensure that $y \mapsto G_{\varepsilon}(y,u)$ is a contraction in $C^1[0,1]$. 
	\item[(iii)] Condition \ref{DP_ass1_l3}(a) or (b) is necessary due to the product in \reff{DP_rek_3}. Note that by Hoelder's inequality, \ref{DP_ass1_l3}(a) follows from \reff{DP_continuous_cond} if $q \ge 2$. The inequality $|z|_{q} \le |z|_{q'}$ for $0 < q' \le q$, $z\in\IR^p$ implies that \ref{DP_ass1_l3}(a) is fulfilled if \reff{DP_continuous_cond} holds with $\sum_{i=1}^{p}\chi_i^{1/2} < 1$. 
	\item[(iv)] For $p > 1$, the conditions stated in Assumption \ref{DP_ass1} may lead to non-optimal restrictions on $G$ which is due to the general formulation. One way to circumvent this is by posing conditions on the $m$-th iteration of $G$ instead of $G$ itself. Some models like tvAR($p$) or tvARCH($p$) also allow a reformulation to a $p$-dimensional recursion with only one lag. Since our aim is to cover a wide range of models with simple conditions, we will not discuss these approaches in detail.
\end{itemize}
\end{remark}

\begin{remark}[Almost sure calculus v.s. $L^q$ calculus]\label{remark_differentiability}
As mentioned in the beginning of this paper, many of the statistical applications in Section \ref{DP_section3} can be proved by only assuming differentiability of $u \mapsto \tilde X_t(u)$ in $L^q$, an example was given in Proposition \ref{DP_prop_biasexp}(b). As pointed out by a referee, to obtain $L^q$ differentiability, Assumptions \ref{DP_ass1}\ref{DP_ass1_l2}, \ref{DP_ass1_l3} can be weakened. Technically, proving differentiability of $u \mapsto \tilde X_t(u)$ then corresponds to analyzing smoothness properties of a fixed point of the iteration \reff{DP_rek_2}. As analytic results in \cite{fixedpoint2017} suggest, the main difference is a decrease in smoothness assumptions that have to be posed on $G$, namely $G$ is no longer needed to be continuously differentiable but only differentiable a.e. and the suprema in \reff{DP_continuous_cond} and \reff{DP_cond_9} can be taken outside. This theory would also include $tvTAR$ processes (cf. Example \ref{DP_example_model}).

There are some drawbacks when using only $L^q$ calculus and no a.s. statements. As already mentioned in Remark \ref{remark_biasdet}, one has to pose slightly more smoothness conditions on $g$ if one wants to apply the theory to processes $g(Z_{t,n})$. Moreover, it seems that a Lindeberg-type condition which is used in the proof of the global CLT Theorem \ref{DP_prop_clt} can only be shown under Assumption \ref{DP_ass_general}\ref{DP_ass_general_s2} when only second moments are available. To ensure \ref{DP_ass_general}\ref{DP_ass_general_s2}, we have to ask for \ref{DP_ass1}\ref{DP_ass1_l2} in view of Theorem \ref{DP_thm_cont}.
\end{remark}


\textbf{Existence and uniqueness of $X_{t,n}$ and $\tilde X_t(u)$}. We now establish existence and uniqueness under mild contraction conditions.

\begin{proposition}\label{DP_rek_1_sol}(i) Existence of a stationary approximation: Suppose that Assumption \ref{DP_ass1}\ref{DP_ass1_l1} holds. Then for all $u \in [0,1]$, the recursion \reff{DP_rek_2} has an a.s. unique $\sF_t$-measurable, stationary and ergodic solution $\tilde X_t(u) = H(u,\sF_t)$ and  we have  with some $C >0$ and $0 < \rho < 1$:
\[
	\sup_{u\in[0,1]}\delta^{\tilde X(u)}_q(k) \le C \rho^k, \quad\quad \sup_{u\in[0,1]}\|\tilde X_0(u)\|_q < \infty.
\]
(ii) Existence of the nonstationary process: Under the above conditions, there exists an a.s. unique $\sF_t$-measurable solution of \reff{DP_rek_1} with $X_{t,n} = H_{t,n}(\sF_t)$, where $H_{t,n}$ are measurable functions. Furthermore, $\sup_{n\in\IN}\sup_{t=1,...,n}\|X_{t,n}\|_q < \infty$ and with some $C > 0$ and $0 < \rho < 1$:
	\[
		\sup_{n\in\IN}\delta^{X_{\cdot,n}}_q(k) \le C \rho^k.
	\]
\end{proposition}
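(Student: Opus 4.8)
The plan is to realize both $\tilde X_t(u)$ and $X_{t,n}$ as fixed points of a Picard iteration of the recursions \reff{DP_rek_2}, \reff{DP_rek_1}, and to extract stationarity, the measurable representations, a.s.\ uniqueness, the moment bounds and the geometric decay of the dependence measure from one contraction estimate. Fix $u\in[0,1]$ and set $q':=\min\{q,1\}$. For $\sF_{t-1}$-measurable $\IR^p$-valued $Y,Y'$ one conditions on $\sF_{t-1}$, uses $\varepsilon_t\perp\sF_{t-1}$ and \reff{DP_cond_1} to get $\IE\big[\,|G_{\varepsilon_t}(Y,u)-G_{\varepsilon_t}(Y',u)|^q\mid\sF_{t-1}\big]\le|Y-Y'|_{\chi,q'}^{q}$ a.s.; taking expectations and distinguishing $q\ge1$ (then $q'=1$, and Minkowski is applied to $\sum_i\chi_i|Y_i-Y_i'|$) from $q<1$ (then $q'=q$, $\|\cdot\|_q^q$ is subadditive, and $|Y-Y'|_{\chi,q}^q=\sum_i\chi_i|Y_i-Y_i'|^q$), in both cases one obtains
\[
	\big\|G_{\varepsilon_t}(Y,u)-G_{\varepsilon_t}(Y',u)\big\|_q^{q'}\ \le\ \sum_{i=1}^{p}\chi_i\,\|Y_i-Y_i'\|_q^{q'}.
\]
Since $\chi_i\ge0$ and $|\chi|_1<1$, the recursion $r_k=\sum_{i=1}^{p}\chi_i r_{k-i}$ is geometrically stable: there is $\rho_0\in(0,1)$ so that any nonnegative $(b_k)$ with $b_k=0$ for $k<0$ and $b_k\le\sum_i\chi_i b_{k-i}$ for $k\ge1$ satisfies $b_k\le C\,b_0\,\rho_0^{\,k}$ (e.g.\ by comparison with the companion matrix, whose spectral radius is $<1$ because $|\chi|_1<1$).

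\textbf{The stationary approximation.} Let $\tilde X_t^{(0)}(u):=$ (componentwise) $y_0$ and $\tilde X_t^{(m)}(u):=G_{\varepsilon_t}\big(\tilde X_{t-1}^{(m-1)}(u),\dots,\tilde X_{t-p}^{(m-1)}(u),u\big)$, so $\tilde X_t^{(m)}(u)$ depends only on $\varepsilon_t,\dots,\varepsilon_{t-m+1}$ and the pair $(\tilde X_t^{(m)}(u),\tilde X_t^{(m+1)}(u))$ has a law not depending on $t$. The displayed estimate applied to consecutive iterates gives $\phi_m\le|\chi|_1\phi_{m-1}$ for $\phi_m:=\|\tilde X_0^{(m+1)}(u)-\tilde X_0^{(m)}(u)\|_q^{q'}$, hence $\|\tilde X_t^{(m+1)}(u)-\tilde X_t^{(m)}(u)\|_q\le C\rho^m$ with $C>0,\rho\in(0,1)$ \emph{uniform} in $t,u$ (the constants depend only on $\chi$ and $\sup_u\|G_{\varepsilon_0}(y_0,u)\|_q$, i.e.\ on \ref{DP_ass1}\ref{DP_ass1_l1}). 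Thus $(\tilde X_t^{(m)}(u))_m$ is Cauchy in $L^q$; its limit $\tilde X_t(u)$ is $\sF_t$-measurable, so $\tilde X_t(u)=H(u,\sF_t)$; as a measurable functional of the i.i.d.\ sequence it is stationary and ergodic, and passing to the $L^q$ limit in \reff{DP_rek_2} shows it solves that recursion. Applying the estimate to two stationary $L^q$ solutions and using stationarity gives $\|\tilde X_0(u)-\tilde X_0'(u)\|_q^{q'}\le|\chi|_1\,\|\tilde X_0(u)-\tilde X_0'(u)\|_q^{q'}$, whence a.s.\ uniqueness. For the moment bound one estimates $\|G_{\varepsilon_t}(Y,u)\|_q\le\|G_{\varepsilon_t}(Y,u)-G_{\varepsilon_t}(y_0,u)\|_q+\|G_{\varepsilon_t}(y_0,u)\|_q$; by the displayed estimate this makes $\|\tilde X_t^{(m)}(u)\|_q^{q'}$ obey an inhomogeneous version of the recursion, stable because $|\chi|_1<1$, with inhomogeneity bounded uniformly by \ref{DP_ass1}\ref{DP_ass1_l1}, so $\sup_u\|\tilde X_0(u)\|_q<\infty$.

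\textbf{Dependence measure and the nonstationary process.} For the version $\tilde X_t^{*(0)}(u)$ obtained by replacing $\varepsilon_0$ with an independent copy one has $\tilde X_t^{*(0)}(u)=\tilde X_t(u)$ for $t<0$ (no dependence on $\varepsilon_0$), $\|\tilde X_0(u)-\tilde X_0^{*(0)}(u)\|_q\le2\sup_u\|\tilde X_0(u)\|_q$, and for $k\ge1$ the two processes obey \reff{DP_rek_2} with identical innovations $\varepsilon_1,\dots,\varepsilon_k$; hence $b_k:=\|\tilde X_k(u)-\tilde X_k^{*(0)}(u)\|_q^{q'}$ satisfies the hypotheses of the geometric-stability fact, giving $\sup_u\delta_q^{\tilde X(u)}(k)\le C\rho^k$. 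For part (ii): when $t\le0$, $(t/n)\vee0=0$, so \reff{DP_rek_1} is the stationary recursion \reff{DP_rek_2} at $u=0$, and by part (i) its unique $\sF_t$-measurable solution is $X_{t,n}=\tilde X_t(0)$, inheriting the uniform moment bound and the exponential decay. For $1\le t\le n$, $X_{t,n}$ is obtained by forward iteration of the measurable map $G$, producing measurable $H_{t,n}$ with $X_{t,n}=H_{t,n}(\sF_t)$ and a.s.\ uniqueness. Then $\sup_n\sup_{t\le n}\|X_{t,n}\|_q<\infty$ follows exactly as in part (i), the inhomogeneous recursion now being uniform over $u=(t/n)\vee0\in[0,1]$; and $\sup_n\delta_q^{X_{\cdot,n}}(k)\le C\rho^k$ follows as above upon replacing $\varepsilon_{t-k}$ by an independent copy: $\|X_{s,n}-X_{s,n}^{*(t-k)}\|_q=0$ for $s<t-k$, the difference at $s=t-k$ is $\le2\sup_n\sup_{t\le n}\|X_{t,n}\|_q$, and for $t-k<s\le t$ the displayed estimate contracts.

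\textbf{Main obstacle.} There is no deep difficulty here: the harder analytic content of Section \ref{DP_section2} --- the continuous, resp.\ continuously differentiable, modification of $u\mapsto\tilde X_t(u)$ and the derivative process as the solution of a functional equation --- is treated in the later theorems and not needed for this proposition. The only points requiring care are the bookkeeping for $q<1$, where one works throughout with $\|\cdot\|_q^{q'}$ and subadditivity of $\|\cdot\|_q^q$ in place of a genuine norm, and the extraction of a \emph{uniform} geometric rate from the weighted, multi-lag contraction, i.e.\ the passage from $b_k\le\sum_i\chi_i b_{k-i}$ to geometric decay, which is exactly where the hypothesis $|\chi|_1<1$ is used.
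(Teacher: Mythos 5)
Your proposal is correct and follows essentially the same route as the paper: the same conditioning-on-$\sF_{t-1}$ contraction estimate in $\|\cdot\|_q^{q'}$ derived from \reff{DP_cond_1}, the same geometric-stability fact for multi-lag recursions (the paper's Lemma \ref{DP_standard_rec_argument}, which you reprove via the companion matrix), the same identification $X_{t,n}=\tilde X_t(0)$ for $t\le 0$ in part (ii), and the same coupling argument for the dependence measure. The only cosmetic difference is that you run a forward Picard iteration converging in $L^q$ (with shift-invariance collapsing the $p$ lags to the single inequality $\phi_m\le|\chi|_1\phi_{m-1}$), whereas the paper iterates the random maps backward from time $-n$ and obtains a.s.\ convergence via Borel--Cantelli.
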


The proof of (i) for fixed $u\in[0,1]$ is similar to the proof in \cite{shaowu2007}, Theorem 5.1. Since we state the results uniformly in $u\in[0,1]$, we will give the proof in the appendix for completeness. Since the definition of $X_{t,n}$ and $\tilde X_t(0)$ coincide for $t \le 0$, existence and uniqueness of $X_{t,n}$ follow from the existence and uniqueness of $\tilde X_t(0)$. Therefore, the existence statement in (ii) is an immediate corollary of (i).\\


\textbf{A uniform $L^q$ approximation:} We now prove that $X_{t,n}$ can be approximated by the stationary process $\tilde X_t(u)$ uniformly in a $L^q$-sense.

\begin{lemma}\label{DP_lemma_1}
	Suppose that Assumption \ref{DP_ass1}\ref{DP_ass1_l1},\ref{DP_ass1_l4} hold. Then
	\begin{equation}
		\sup_{u \not= u'}\frac{\| \tilde X_t(u) - \tilde X_{t}(u')\|_q}{|u - u'|^{\alpha}} \le \frac{C}{(1-|\chi|_1)^{1/q'}}.\label{DP_x_stat_hoelder}
	\end{equation}
	Furthermore, we have:
	\begin{equation}
		\sup_{t=1,...,n}\|X_{t,n} - \tilde X_{t}(t/n)\|_q \le Cp^{\alpha} \left(\frac{|\chi|_1}{(1-|\chi|_1)^2}\right)^{1/q'} \cdot n^{-\alpha}. \label{DP_x_lokstat_lipschitz}
	\end{equation}
\end{lemma}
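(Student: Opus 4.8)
The plan is to run a Gronwall / fixed-point iteration on the two recursions \reff{DP_rek_1} and \reff{DP_rek_2}, carrying all bounds in the $q'$-th power (with $q'=\min\{q,1\}$) so that the cases $q\ge 1$ — where $q'=1$ and $\|\cdot\|_q$ is a norm — and $q<1$ — where $q'=q$ and $\|\cdot\|_q^{q}$ is subadditive — are treated simultaneously. Two elementary facts will be used repeatedly. First, $\|A+B\|_q^{q'}\le\|A\|_q^{q'}+\|B\|_q^{q'}$, and for any random vector $V=(V_1,\dots,V_p)$ one has $\big\||V|_{\chi,q'}\big\|_q^{q'}\le\sum_{i=1}^p\chi_i\|V_i\|_q^{q'}$. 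Second, if $V,V'$ are $\sF_{t-1}$-measurable, hence independent of $\varepsilon_t$, then conditioning on $\sF_{t-1}$ turns the deterministic bounds of Assumption \ref{DP_ass1} into $L^q$-bounds: $\IE\big[\,|G_{\varepsilon_t}(V,u)-G_{\varepsilon_t}(V',u)|^q\mid\sF_{t-1}\,\big]\le|V-V'|_{\chi,q'}^{q}$ a.s.\ by \ref{DP_ass1_l1}, and $\IE\big[\,|G_{\varepsilon_t}(V,u)-G_{\varepsilon_t}(V,u')|^q\mid\sF_{t-1}\,\big]\le C(V)^{q}|u-u'|^{\alpha q}$ a.s.\ by the definition of $C(\cdot)$ in \ref{DP_ass1_l4}.

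For \reff{DP_x_stat_hoelder}: by stationarity $D(u,u'):=\|\tilde X_t(u)-\tilde X_t(u')\|_q$ does not depend on $t$, and it is finite since $\sup_u\|\tilde X_0(u)\|_q<\infty$ by Proposition \ref{DP_rek_1_sol}(i). Using \reff{DP_rek_2} and the split
\[
\tilde X_t(u)-\tilde X_t(u')=\big[G_{\varepsilon_t}(\tilde Y_{t-1}(u),u)-G_{\varepsilon_t}(\tilde Y_{t-1}(u'),u)\big]+\big[G_{\varepsilon_t}(\tilde Y_{t-1}(u'),u)-G_{\varepsilon_t}(\tilde Y_{t-1}(u'),u')\big],
\]
the first bracket has $q'$-th power of its norm bounded, after conditioning and \ref{DP_ass1_l1}, by $\sum_{i=1}^p\chi_i D(u,u')^{q'}=|\chi|_1 D(u,u')^{q'}$; the second, after conditioning and \ref{DP_ass1_l4}, by $\|C(\tilde Y_{t-1}(u'))\|_q^{q'}|u-u'|^{\alpha q'}\le C^{q'}|u-u'|^{\alpha q'}$. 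Hence $D(u,u')^{q'}\le|\chi|_1 D(u,u')^{q'}+C^{q'}|u-u'|^{\alpha q'}$, and since $D(u,u')<\infty$ and $|\chi|_1<1$ this solves to $D(u,u')\le C(1-|\chi|_1)^{-1/q'}|u-u'|^\alpha$.

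For \reff{DP_x_lokstat_lipschitz}: set $\delta_s:=\|X_{s,n}-\tilde X_s(\tfrac{s}{n}\vee 0)\|_q$ for $s\le n$, noting $\delta_s=0$ for $s\le 0$ since there the two recursions coincide. For $1\le t\le n$ both recursions run with the \emph{same} time argument $t/n$, so they differ only in their $y$-entries and \ref{DP_ass1_l1} plus conditioning give $\delta_t^{q'}\le\sum_{i=1}^p\chi_i\|X_{t-i,n}-\tilde X_{t-i}(t/n)\|_q^{q'}$. Inserting $\tilde X_{t-i}(\tfrac{t-i}{n}\vee 0)$ and using subadditivity together with \reff{DP_x_stat_hoelder} and the bound $|\tfrac{t-i}{n}\vee 0-\tfrac{t}{n}|\le p/n$, each summand is at most $\delta_{t-i}^{q'}+\tfrac{C^{q'}}{1-|\chi|_1}(p/n)^{\alpha q'}$. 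This gives the recursive inequality
\[
\delta_t^{q'}\le|\chi|_1\max_{1\le i\le p}\delta_{t-i}^{q'}+|\chi|_1\,\frac{C^{q'}}{1-|\chi|_1}\,(p/n)^{\alpha q'},
\]
and an induction on $t$ (base case $\delta_s=0$ for $s\le 0$) shows $\max_{s\le t}\delta_s^{q'}\le\dfrac{|\chi|_1 C^{q'}(p/n)^{\alpha q'}}{(1-|\chi|_1)^2}$ for all $t\le n$; taking $q'$-th roots is exactly \reff{DP_x_lokstat_lipschitz}.

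The only genuinely delicate point is the conditioning step, i.e.\ justifying the a.s.\ conditional inequalities stated above from independence of $\varepsilon_t$ and $\sF_{t-1}$ and measurability of $G$ (a Fubini / regular-conditional-distribution argument); everything else is bookkeeping. In particular the one place the argument could stall — solving the self-referential inequality for $D(u,u')$ — is safe because the finiteness of $D(u,u')$ is handed to us by Proposition \ref{DP_rek_1_sol}. The secondary thing to watch is not collapsing the $q\ge 1$ and $q<1$ cases incorrectly: both the triangle-type inequality and the weighted-norm inequality must be used in their $q'$-th-power form, which is what produces the exponents $1/q'$ in the stated constants.
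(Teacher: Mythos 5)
Your proposal is correct and follows essentially the same route as the paper: the same two-term split of $\tilde X_t(u)-\tilde X_t(u')$ with conditioning on $\sF_{t-1}$ to convert \ref{DP_ass1}\ref{DP_ass1_l1} and \ref{DP_ass1_l4} into $L^q$-contraction bounds carried in $q'$-th powers, stationarity plus the finiteness from Proposition \ref{DP_rek_1_sol} to solve the self-referential inequality for \reff{DP_x_stat_hoelder}, and the same insertion of $\tilde X_{s-i}(\frac{s-i}{n}\vee 0)$ with $z_s=0$ for $s\le 0$ for \reff{DP_x_lokstat_lipschitz}. The only cosmetic differences are which argument of $C(\cdot)$ appears in the split (covered either way by the supremum in \ref{DP_ass1_l4}) and that you solve the final recursion by a $\max$-induction rather than unrolling it, which yields the identical constant.
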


Note that the approximation error in \reff{DP_x_lokstat_lipschitz} cannot be avoided - cf. \cite{dahlhaus2012}, (49), for the tvAR(1) case (with a different error due to different assumptions).\\



\textbf{Existence of continuous modifications and derivative processes:} Proposition \ref{DP_rek_1_sol} gives the almost sure uniqueness of $\tilde X_t(u)$ for each $u \in [0,1]$, but not continuity of $u \mapsto \tilde X_t(u)$ since this involves uncountably many points $u\in[0,1]$. In order to guarantee the existence of a continuous or even differentiable modification $\hat X_t(u)$ of $\tilde X_t(u)$ we have to impose stronger conditions on the recursion function $G$ in \reff{DP_rek_1} ($\hat X_t(u)$ is a modification of $\tilde X_t(u)$ if for all $u\in [0,1]$, $\hat X_t(u) = \tilde X_t(u)$ a.s.). A natural way would be to apply extensions of the Kolmogorov-Chentzov theorem, but they usually contain tradeoffs in their conditions between moment assumptions and smoothness of the process which usually leads to either strong moment or smoothness assumptions which may not be useful in practice. Furthermore it does not use the specific structure of the process which is known and we could not give a bound for moments of $\sup_{u\in[0,1]}|\hat X_t(u)|$. We therefore use a different approach.

\begin{theorem}[Existence of a continuous modification]\label{DP_thm_cont} 
Suppose that Assumption \ref{DP_ass1}\ref{DP_ass1_l2} holds. Then for each $t\in\IZ$, there exists a continuous modification $(\hat X_t(u))_{u \in [0,1]}$ of $(\tilde X_t(u))_{u \in [0,1]}$ from Proposition \ref{DP_rek_1_sol} with $\sup_{u \in [0,1]}|\hat X_t(u)| \in L^{q}$.
\end{theorem}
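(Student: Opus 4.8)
The plan is to construct $\hat X_t(u)$ as the almost sure uniform limit of a sequence of processes that are, by construction, continuous in $u$, and to exploit the contraction property in Assumption \ref{DP_ass1}\ref{DP_ass1_l2} to get convergence in the Banach space $C[0,1]$ (equipped with the sup-norm). Concretely, fix $t\in\IZ$ and define for $m\ge 1$ the $m$-step truncated recursion: set $\hat X_s^{(m)}(u) := G_{\varepsilon_0}(y_0,u)$ (or any fixed continuous starting value, say $y_{0,i}$ in the $i$th coordinate) for $s \le t-m$, and then iterate $\hat X_s^{(m)}(u) = G_{\varepsilon_s}(\hat X_{s-1}^{(m)}(u),\dots,\hat X_{s-p}^{(m)}(u),u)$ for $t-m < s \le t$. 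Since $(y,u)\mapsto G_{\varepsilon}(y,u)$ is continuous for every $\varepsilon$ (first part of \ref{DP_ass1_l2}), each $\hat X_t^{(m)}(\cdot)$ is a.s. a continuous function of $u$ on $[0,1]$, i.e. a random element of $C[0,1]$. Moreover the random Lipschitz-type bound in \reff{DP_continuous_cond}, namely that $L_{\varepsilon_0} := \sup_u\sup_{y\ne y'} |G_{\varepsilon_0}(y,u)-G_{\varepsilon_0}(y',u)|/|y-y'|_{\chi,q'}$ has $\|L_{\varepsilon_0}\|_q\le 1$, is precisely what lets us iterate the contraction in $C[0,1]$.

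The key step is to show that $(\hat X_t^{(m)})_{m\ge 1}$ is a.s. Cauchy in $C[0,1]$. One compares $\hat X_t^{(m+1)}$ and $\hat X_t^{(m)}$: they first differ at time $s = t-m$, where the difference is bounded by $\sup_{u}|\hat X_{t-m}^{(m+1)}(u) - y_0|_1$-type quantity times the random contraction factors accumulated over the remaining $m$ steps. Iterating \reff{DP_continuous_cond} $m$ times gives
\[
	\big\|\sup_{u\in[0,1]}\big|\hat X_t^{(m+1)}(u) - \hat X_t^{(m)}(u)\big|\big\|_q \le \Big(\prod_{j=0}^{m-1} \text{(random factors)}\Big)\cdot(\text{initial gap}),
\]
and taking $L^q$ norms together with independence of the $\varepsilon_s$ and $\|L_{\varepsilon_0}\|_q \le 1$ yields, after the weighted-norm bookkeeping with $|\chi|_1 < 1$, a geometric bound $\le C\rho^{m}$ for some $\rho\in(0,1)$ (this is the same computation that already appears implicitly in Proposition \ref{DP_rek_1_sol} and Lemma \ref{DP_lemma_1}, now carried out with the suprema over $u$ inside the norm thanks to \reff{DP_continuous_cond}). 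Summability of $\sum_m \|\,\sup_u|\hat X_t^{(m+1)}(u)-\hat X_t^{(m)}(u)|\,\|_q$ implies that $\sum_m \sup_u|\hat X_t^{(m+1)}(u)-\hat X_t^{(m)}(u)| < \infty$ a.s., so the sequence converges a.s. uniformly in $u$; call the limit $\hat X_t(u)$. A uniform limit of continuous functions is continuous, so $u\mapsto\hat X_t(u)$ is a.s. continuous, and the same telescoping bound gives $\big\|\sup_u|\hat X_t(u)|\big\|_q < \infty$ via $\sup_u|\hat X_t(u)| \le \sup_u|\hat X_t^{(1)}(u)| + \sum_{m\ge1}\sup_u|\hat X_t^{(m+1)}(u)-\hat X_t^{(m)}(u)|$ and the triangle inequality in $L^q$ (using $q' = \min\{q,1\}$ and the $c_r$-inequality when $q<1$).

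It remains to verify that $\hat X_t$ is genuinely a modification of $\tilde X_t$, i.e. $\hat X_t(u) = \tilde X_t(u)$ a.s. for each fixed $u\in[0,1]$. For fixed $u$, both $\hat X_t(u)$ (as just constructed, reading off the $u$-section) and $\tilde X_t(u)$ (from Proposition \ref{DP_rek_1_sol}) are $\sF_t$-measurable solutions of the recursion \reff{DP_rek_2} obtained as limits of the $m$-step iterates started from the same fixed value; the pointwise-in-$u$ contraction argument of Proposition \ref{DP_rek_1_sol} shows the $m$-step iterate converges in $L^q$ to the unique stationary solution, which must therefore coincide a.s. with $\hat X_t(u)$. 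Hence $\hat X_t(u) = \tilde X_t(u)$ a.s. for every $u$, as required.

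I expect the main obstacle to be the careful handling of the weighted norm $|\cdot|_{\chi,q'}$ and the case $q<1$ (where $\|\cdot\|_q$ is only a quasi-norm) when turning the one-step contraction \reff{DP_continuous_cond} into an $m$-step geometric bound with the suprema over $u\in[0,1]$ kept inside the expectation — in particular making sure the random contraction factors at different times can be decoupled (they are independent, so their $L^q$ norms multiply) and that the accumulated weights $\chi_{i_1}\cdots\chi_{i_m}$ summed over lag-paths contribute a factor controlled by a power of $|\chi|_1<1$. The continuity and modification parts are then comparatively routine consequences of a.s. uniform convergence.
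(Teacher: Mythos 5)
Your proposal is correct and follows essentially the same route as the paper: view the $m$-step truncated iterates started from a fixed vector as random elements of $(C[0,1],|\cdot|_\infty)$, use \reff{DP_continuous_cond} (with the supremum over $u$ kept inside the $L^q$-norm) together with the recursion Lemma \ref{DP_standard_rec_argument} to get a geometric Cauchy bound, conclude a.s. uniform convergence to a continuous limit, identify it with $\tilde X_t(u)$ pointwise via the uniqueness in Proposition \ref{DP_rek_1_sol}, and obtain $\sup_u|\hat X_t(u)|\in L^q$ by telescoping. The only cosmetic difference is that the paper passes the sup-moment bound to the limit via truncation and monotone convergence rather than your direct telescoping series, which is an equally valid variant.
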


\begin{remark}
	In the case $G_{\varepsilon}(y,u) = \tilde G_{\varepsilon}(y,\theta_0(u))$ with some parameter curve $\theta_0:[0,1] \to \Theta$ (cf. Section \ref{DP_section4}), the supremum taken over $u\in[0,1]$ in \reff{DP_continuous_cond} restricts the parameter space $\Theta$. If additionally \ref{DP_ass1}\ref{DP_ass1_l3} is fulfilled, Theorem \ref{DP_thm_cont} also holds under the weaker condition $\sup_{u\in[0,1]}\Big\|\sup_{y\not=y'}\frac{|G_{\varepsilon_0}(y,u) - G_{\varepsilon_0}(y',u)|}{|y-y'|_{\chi,q'}}\Big\|_q \le 1$ which leads to larger admissible parameter spaces $\Theta$. For details see Proposition \ref{DP_prop_continuous} in the appendix.
\end{remark}


In the following we assume that $(y,u) \mapsto G_{\varepsilon}(y,u)$ is differentiable in both components. For the moment, assume that there exists a modification $(\hat X_t(u))_{u\in[0,1]}$ of the process $(\tilde X_t(u))_{u \in [0,1]}$ with differentiable paths and denote the derivative by $\partial_u \hat X_{t}(u)$. 
Then the following recursion equation for $D_t(u) = \partial_u \hat X_t(u)$, obtained by differentiating \reff{DP_rek_2} \underline{should hold} a.s.:
\begin{eqnarray}
	D_t(u) = \langle \partial_{1} G_{\varepsilon_t}(\tilde Y_{t-1}(u), u) , (D_{t-1}(u),...,D_{t-p}(u))'\rangle + \partial_2 G_{\varepsilon_t}(\tilde Y_{t-1}(u),u),\label{DP_rek_3}
\end{eqnarray}
This is shown in the next theorem. The first part is devoted to the existence of a solution $D_t(u)$ of the recursion \reff{DP_rek_3} given the existence of the process $\tilde X_t(u)$ from Theorem \ref{DP_rek_1_sol}; in the second part we prove that $\tilde X_t(u)$ has a differentiable modification with respect to $u$ and that the derivative coincides with $D_t(u)$. Both $\tilde X_t(u)$ and $D_t(u)$ are uniquely determined by \reff{DP_rek_2} and \reff{DP_rek_3}.

\medskip
\begin{theorem}[Existence of derivative processes]\label{DP_thm_diff}
	Suppose that Assumptions \ref{DP_ass1}\ref{DP_ass1_l2}, \ref{DP_ass1_l3} hold. Then the following statements are true.
	\begin{enumerate}
		\item[(i)] Existence of the first derivative process: 
		For all $u\in[0,1]$, the recursion \reff{DP_rek_3} has a unique stationary and ergodic solution $D_t(u) = \tilde H(u,\sF_t)$ with some measurable $H$ and it holds that
	\[
		\delta^{D(u)}_q(k) \le C \rho^k, \quad\quad \sup_{u\in[0,1]}\|D_t(u)\|_q < \infty
	\]
	with some $C > 0$, $0 < \rho < 1$.
	\item[(ii)] Differentiability:
	\begin{itemize}
		\item[(a)] There exists a continuously differentiable modification $(\hat X_t(u))_{u\in [0,1]}$ of the process $(\tilde X_t(u))_{u\in[0,1]}$ from Proposition \ref{DP_rek_1_sol} such that for all $u\in[0,1]$ it holds that $\partial_u \hat X_t(u) = D_t(u)$ a.s.
		\item[(b)] $\sup_{u\in[0,1]}|\partial_u \hat X_t(u)| \in L^{q}$.
	\end{itemize}
\end{enumerate}
\end{theorem}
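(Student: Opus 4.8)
The plan is to prove (i) as the solution theory of the affine $\IR^p$-valued recursion encoded in \reff{DP_rek_3} with random but contracting coefficients, and then to deduce (ii) by running the defining fixed-point iteration of \reff{DP_rek_2} simultaneously for the process and its $u$-derivative in $C^1([0,1])$, identifying the limits via the uniqueness statements already available.

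\emph{Part (i).} Write \reff{DP_rek_3} in companion form for the $\IR^p$-vector $(D_t(u),\dots,D_{t-p+1}(u))'$: the new coordinate is $\langle\partial_1 G_{\varepsilon_t}(\tilde Y_{t-1}(u),u),(D_{t-1}(u),\dots,D_{t-p}(u))'\rangle+\partial_2 G_{\varepsilon_t}(\tilde Y_{t-1}(u),u)$ and the others are shifted. Define the candidate $D_t(u)$ as the limit of the forward iterates started from $0$ in the remote past; two consecutive iterates differ by a solution $E_t$ of the homogeneous recursion $E_t=\langle\partial_1 G_{\varepsilon_t}(\tilde Y_{t-1}(u),u),(E_{t-1},\dots,E_{t-p})\rangle$, and the key estimate is the pointwise bound $|(\partial_1 G_{\varepsilon_t}(y,u))_i|\le L(\varepsilon_t)\,\chi_i^{1/q'}$, where $L(\varepsilon_t):=\sup_u\sup_{y\ne y'}|G_{\varepsilon_t}(y,u)-G_{\varepsilon_t}(y',u)|/|y-y'|_{\chi,q'}$ is the random Lipschitz constant from \reff{DP_continuous_cond}, so that $\|L(\varepsilon_t)\|_q\le 1$ and, by the i.i.d.\ assumption, $L(\varepsilon_t)$ is independent of $\sF_{t-1}$. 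Since $D_{t-i}(u),E_{t-i}$ are $\sF_{t-1}$-measurable for $i\ge1$, this gives $\|E_t\|_q\le\sum_{i=1}^p\chi_i^{1/q'}\|E_{t-i}\|_q$ (with the usual $c_r$-inequality modification and $q'=q$ when $q<1$), and because $\sum_i\chi_i^{1/q'}\le|\chi|_1<1$ the quantity $\max_{t-p<s\le t}\|E_s\|_q$ contracts geometrically over blocks of length $p$. Hence the iterates are Cauchy in $L^q$, their limit $D_t(u)$ is a measurable function of $\sF_t$ (so stationary and ergodic), and uniqueness holds since the difference of two solutions of \reff{DP_rek_3} solves the homogeneous recursion. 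Applying the same contraction to the affine recursion itself yields $\sup_u\|D_t(u)\|_q\le c/(1-\sum_i\chi_i^{1/q'})<\infty$ with $c:=\sup_{t,u}\|\partial_2 G_{\varepsilon_t}(\tilde Y_{t-1}(u),u)\|_q$, which is finite by \ref{DP_ass1_l3} ($\partial_2 G$ bounded at $y_0$, Lipschitz bound \reff{DP_cond_9}, independence of that Lipschitz constant from $\tilde Y_{t-1}(u)$, and $\sup_u\|\tilde X_0(u)\|_q<\infty$ from Proposition \ref{DP_rek_1_sol}). For the dependence measure, perturb $\varepsilon_{t-k}$ and bound $\|D_t(u)-D_t(u)^{*(t-k)}\|_q$ recursively: one contribution comes from perturbing $(D_{t-1}(u),\dots,D_{t-p}(u))$ (re-entering through the same contraction), one from perturbing the coefficients through $\tilde Y_{t-1}(u)$ (controlled by $\sup_u\delta_q^{\tilde X(u)}(k)\le C\rho^k$ of Proposition \ref{DP_rek_1_sol}). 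The delicate term is $\bigl(\partial_1 G_{\varepsilon_t}(\tilde Y_{t-1}(u),u)-\partial_1 G_{\varepsilon_t}(\tilde Y_{t-1}(u)^{*(t-k)},u)\bigr)\cdot D_{t-i}(u)^{*(t-k)}$, a product of two $\sF_{t-1}$-measurable factors for which only $L^q$-bounds are available a priori; this is exactly where Assumption \ref{DP_ass1}\ref{DP_ass1_l3}(a) (extra integrability of the Lipschitz constant, so that H\"older's inequality closes the estimate) or \ref{DP_ass1_l3}(b) ($\partial_1 G$ independent of $y$, so this term vanishes) is used. Resolving the recursion gives $\delta_q^{D(u)}(k)\le C\rho^k$ uniformly in $u$.

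\emph{Part (ii).} Run the fixed-point iteration $g^{(0)}\equiv0$, $g^{(m)}_t(u)=G_{\varepsilon_t}(g^{(m-1)}_{t-1}(u),\dots,g^{(m-1)}_{t-p}(u),u)$ in $C([0,1])$ — essentially the iteration underlying Theorem \ref{DP_thm_cont}. Each $g^{(m)}_t$ is $C^1$ in $u$ by the chain rule (as $G_\varepsilon$ is $C^1$ in $(y,u)$, \ref{DP_ass1_l3}), with $h^{(m)}_t:=\partial_u g^{(m)}_t$ obeying $h^{(m)}_t=\langle\partial_1 G_{\varepsilon_t}(g^{(m-1)}_{t-1}(\cdot),\dots;\cdot),(h^{(m-1)}_{t-1},\dots,h^{(m-1)}_{t-p})\rangle+\partial_2 G_{\varepsilon_t}(g^{(m-1)}_{t-1}(\cdot),\dots;\cdot)$. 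By Theorem \ref{DP_thm_cont}, $g^{(m)}_t\to\hat X_t$ uniformly in $u$ a.s.; since $\partial_1 G,\partial_2 G$ are continuous, the coefficients of the $h$-iteration converge uniformly to those built from $\hat X$, and as these obey the same contraction $\sum_i\chi_i^{1/q'}<1$ a perturbed-contraction argument gives $h^{(m)}_t\to D_t$ uniformly in $u$ a.s., where $D_t$ is the solution of \reff{DP_rek_3} with $\hat X$ inserted. A uniform limit of $C^1$ functions whose derivatives converge uniformly is $C^1$ with derivative the limit, so $u\mapsto\hat X_t(u)$ is continuously differentiable with $\partial_u\hat X_t(u)=D_t(u)$; evaluating the iteration at a fixed $u$ identifies $\hat X_t(u)$ with $\tilde X_t(u)$ a.s.\ (Proposition \ref{DP_rek_1_sol}) and $D_t(u)$ with the process from (i) a.s.\ (uniqueness in (i)), so $\hat X_t$ is the required $C^1$ modification, proving (ii)(a). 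For (ii)(b), argue as for the uniform $L^q$ bound in (i) but with $\sup_u$ pulled inside: $\|\sup_u|h^{(m)}_t(u)|\|_q\le\sum_i\chi_i^{1/q'}\|\sup_u|h^{(m)}_{t-i}(u)|\|_q+\|\sup_u|\partial_2 G_{\varepsilon_t}(\tilde Y_{t-1}(u),u)|\|_q$, using $\sup_u|\hat X_t(u)|\in L^q$ (Theorem \ref{DP_thm_cont}), $\|\sup_u|\partial_2 G_{\varepsilon_0}(y_0,u)|\|_q<\infty$, \reff{DP_cond_9}, and the independence of the uniform Lipschitz constants from the past; $\sum_i\chi_i^{1/q'}<1$ makes this bound uniform in $m$, and Fatou's lemma transfers it to $\sup_u|\partial_u\hat X_t(u)|$.

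The main obstacle is (ii)(a): showing that the process $D_t(u)$ built abstractly in (i) is genuinely the pathwise $u$-derivative of a modification of $\tilde X_t(u)$. This forces one to interchange the fixed-point iteration with differentiation in $u$, hence to control the iteration in $C^1([0,1])$ rather than only in $C^0$ or pointwise in $L^q$; in particular the contraction on the derivatives must be propagated through coefficients that themselves change along the iteration, which is where the continuous differentiability of $G_\varepsilon$ and the contraction constant $|\chi|_1<1$ (now for the linearized recursion) both come in. The secondary difficulty is the product $\partial_1 G\cdot(\text{derivative})$ in the dependence-measure estimate of (i), which is precisely the reason for the dichotomy in Assumption \ref{DP_ass1}\ref{DP_ass1_l3}.
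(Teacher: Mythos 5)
Your proposal is correct and follows essentially the same route as the paper: part (i) is the random affine contraction for \reff{DP_rek_3} iterated from the remote past, with the $L^q$-contraction of the linearized map inherited from \reff{DP_continuous_cond} (you via the pointwise bound $|(\partial_1 G)_i|\le L(\varepsilon_t)\chi_i^{1/q'}$ and independence of $L(\varepsilon_t)$ from $\sF_{t-1}$, the paper via Fatou on difference quotients — same mechanism), and part (ii) is the paper's argument of running the defining iteration as a Cauchy sequence in $(C^1[0,1],|\cdot|_{C^1})$, closing the coefficient-change-times-derivative product with H\"older under \ref{DP_ass1}\ref{DP_ass1_l3}(a) or (b), identifying the limit through the a.s.\ uniqueness of \reff{DP_rek_2} and \reff{DP_rek_3}, and finishing (ii)(b) with Fatou. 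You also correctly locate the two genuine difficulties (the product terms forcing the (a)/(b) dichotomy, and the need to propagate the contraction through iteration-dependent coefficients), so no gap.
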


Finally, let us summarize the results from this section in the following Corollary.

\begin{corollary}\label{DP_cor_ass_preservation}
	Let Assumption \ref{DP_ass1} be fulfilled. Then modifications of the a.s. unique solutions of \reff{DP_rek_1} and \reff{DP_rek_2} fulfill Assumption \ref{DP_ass_general} and \ref{DP_ass_general2}.
\end{corollary}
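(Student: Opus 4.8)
The plan is to assemble the statement from the results of this section, checking each sub-condition of Assumptions \ref{DP_ass_general} and \ref{DP_ass_general2} in turn, against the solutions $\tilde X_t(u)$ and $X_{t,n}$ of \reff{DP_rek_2} and \reff{DP_rek_1} (and their continuous / differentiable modifications). First, Proposition \ref{DP_rek_1_sol}(i) supplies, under \ref{DP_ass1}\ref{DP_ass1_l1}, the stationary ergodic solution $\tilde X_t(u) = H(u,\sF_t)$ with $\sup_u\|\tilde X_0(u)\|_q < \infty$ and $\sup_u \delta^{\tilde X(u)}_q(k) \le C\rho^k$; since $\sum_k \rho^k < \infty$, this gives $\Delta_{0,q}^{\tilde X} < \infty$, i.e. \ref{DP_ass_general2}\ref{DP_ass_general2_m1}. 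Part (ii) of the same proposition yields $X_{t,n} = H_{t,n}(\sF_t)$ with $\sup_{n}\sup_t\|X_{t,n}\|_q < \infty$ and $\sup_n \delta^{X_{\cdot,n}}_q(k) \le C\rho^k$, hence $\Delta_{0,q}^{X} < \infty$, i.e. \ref{DP_ass_general2}\ref{DP_ass_general2_m2}. Combined with the finite moment bounds just quoted, the first halves of \ref{DP_ass_general}\ref{DP_ass_general_s1} ($\sup_u\|\tilde X_t(u)\|_q < \infty$) follow immediately.

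Next, the two inequalities in \reff{DP_ass_general_eq1} are exactly the content of Lemma \ref{DP_lemma_1}: \reff{DP_x_stat_hoelder} gives the H\"older bound $\|\tilde X_t(u) - \tilde X_t(v)\|_q \le C_B|u-v|^\alpha$ with $C_B = C/(1-|\chi|_1)^{1/q'}$, and \reff{DP_x_lokstat_lipschitz} gives $\|X_{t,n} - \tilde X_t(t/n)\|_q \le C_B n^{-\alpha}$ after enlarging $C_B$ to absorb the constant $Cp^\alpha(|\chi|_1/(1-|\chi|_1)^2)^{1/q'}$; here \ref{DP_ass1}\ref{DP_ass1_l4} furnishes the exponent $\alpha \in (0,1]$. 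This completes \ref{DP_ass_general}\ref{DP_ass_general_s1}. For \ref{DP_ass_general}\ref{DP_ass_general_s2}, Theorem \ref{DP_thm_cont} (using \ref{DP_ass1}\ref{DP_ass1_l2}) provides the a.s.\ continuous modification $\hat X_t(u)$ with $\sup_{u\in[0,1]}|\hat X_t(u)| \in L^q$; we adopt this modification throughout, which leaves the $L^q$ and dependence-measure statements unchanged since a modification agrees with $\tilde X_t(u)$ a.s.\ for each fixed $u$. For \ref{DP_ass_general}\ref{DP_ass_general_s3}, Theorem \ref{DP_thm_diff} (using \ref{DP_ass1}\ref{DP_ass1_l2},\ref{DP_ass1_l3}) gives a continuously differentiable modification with $\partial_u\hat X_t(u) = D_t(u)$ a.s., $\sup_{u}|\partial_u \hat X_t(u)| \in L^q$, $\delta^{D(u)}_q(k) \le C\rho^k$ and $\sup_u\|D_t(u)\|_q < \infty$; summability of $C\rho^k$ then gives $\Delta_{0,q}^{\partial\tilde X} < \infty$, which is \ref{DP_ass_general2}\ref{DP_ass_general2_m3}, while the requirement $\alpha = 1$ in \ref{DP_ass_general_s3} is read off from \ref{DP_ass1}\ref{DP_ass1_l4} with $\alpha=1$ (as is implicit when \ref{DP_ass1_l3} is invoked; alternatively one notes that $G$ differentiable in $u$ with a uniformly $L^q$-bounded derivative forces the H\"older exponent to be $1$).

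The only genuine subtlety — and the point deserving care rather than a real obstacle — is bookkeeping of the quantifiers and the measurability requirements: Assumption \ref{DP_ass_general2}\ref{DP_ass_general2_m1} and \ref{DP_ass_general2_m3} demand not just summable dependence measures but the representations $\tilde X_t(u) = H(u,\sF_t)$ and $\partial_u\tilde X_t(u) = \partial_u H(u,\sF_t)$ with a single measurable $H$ jointly in $(u,\sF_t)$; this is precisely what the continuous, resp.\ continuously differentiable, modification buys us, since joint measurability follows from continuity in $u$ together with $\sF_t$-measurability at each fixed $u$ (a standard argument via pointwise limits on a countable dense set), and then $\partial_u H$ is the a.s.\ limit of difference quotients, hence measurable. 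I would state this once and then simply cite the three theorems. No new estimates are needed: the corollary is a dictionary translation, and the proof is essentially a table matching each item of Assumptions \ref{DP_ass_general}--\ref{DP_ass_general2} to the appropriate line of Proposition \ref{DP_rek_1_sol}, Lemma \ref{DP_lemma_1}, Theorem \ref{DP_thm_cont} and Theorem \ref{DP_thm_diff}.
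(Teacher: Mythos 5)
Your proposal is correct and coincides with the paper's (implicit) proof: the corollary is stated as a summary of Section~\ref{DP_section2}, and the intended argument is exactly the item-by-item matching you give, with Proposition~\ref{DP_rek_1_sol} supplying the moment bounds and geometric dependence measures, Lemma~\ref{DP_lemma_1} the two inequalities of \ref{DP_ass_general_s1}, and Theorems~\ref{DP_thm_cont} and \ref{DP_thm_diff} the continuous and continuously differentiable modifications together with the uniform $L^q$ bounds and \ref{DP_ass_general2_m3}. Your additional remarks on upgrading the H\"older exponent to $\alpha=1$ via the differentiable modification and on the joint measurability of $H(u,\sF_t)$ are sound and fill in details the paper leaves unstated.
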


For some models it is possible to obtain explicit expressions for the corresponding derivative processes.

\begin{example}[Explicit representations for derivative processes]\noindent
\begin{enumerate}
	\item[(i)] The tvAR($p$) process $X_{t,n} = \sum_{j=1}^{p}a_{j}\big(\frac{t}{n}\big) X_{t-j,n} + \varepsilon_t$ has the corresponding stationary approximation $\tilde X_t(u) = \sum_{j=1}^{p}a_j(u) \tilde X_{t-j}(u) + \varepsilon_t$ which has an explicit representation $\tilde X_t(u) = \sum_{j=0}^{\infty}\psi_j(u)\cdot \varepsilon_{t-j}$ with differentiable $\psi_j:[0,1]\to \IR$ ($j=0,1,2,...)$. It is easy to see that $\partial_u \tilde X_t(u) = \sum_{j=0}^{\infty}\partial_u \psi_j(u) \cdot \varepsilon_{t-j}$ is the a.s. uniquely determined derivative process.
	\item[(ii)] Similarly to (i), it is easy to see that general linear processes $\tilde X_t(u) = \sum_{j=0}^{\infty}\psi_j(u)\cdot \varepsilon_{t-j}$ with differentiable $\psi_j:[0,1]\to \IR$ ($j = 0,1,2,...$) have derivative process $\partial_u \tilde X_t(u) = \sum_{j=1}^{\infty}\partial_u \psi_j(u)\cdot \varepsilon_{t-j}$ under appropriate summability conditions.
	\item[(iii)] For tvARCH($p$) processes, explicit expressions for the derivative processes were obtained in \cite{suhasini2006}.
\end{enumerate}
\end{example}

In the following we will write $\tilde X_t(u)$ even if we mean the differentiable modification to keep notation simple. Since all our results only involve countably many observations, this will not cause any problems.

\textbf{Higher order derivative processes:} Under additional assumptions, one can show uniform $L^q$ Hoelder properties of the first derivative process:

\begin{proposition}[Hoelder property of the first derivative process]\label{DP_hoelder_derivative}
	Suppose that Assumption \ref{DP_ass1}\ref{DP_ass1_l2},\ref{DP_ass1_l3} hold. Additionally assume that for some $1 \ge \alpha_2 > 0$ and $i = 1,2$ it holds component-wise:
	\begin{equation}
		D_i := \sup_u \|D_i(\tilde Y_t(u))\|_q < \infty, \quad D_i(y) := \sup_{u\not=u'}\frac{\|\partial_i G_{\varepsilon_0}(y,u) - \partial_i G_{\varepsilon_0}(y,u')\|_{q}}{|u-u'|^{\alpha_2}}\label{DP_cond_20}
	\end{equation}
	Then
	\[
		\sup_{u\not= u'}\frac{\|\partial_u \tilde X_t(u) - \partial_u \tilde X_{t}(u')\|_{q/2}}{|u-u'|^{\alpha_2}} \le C.
	\]
	with some constant $C > 0$.
\end{proposition}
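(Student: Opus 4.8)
Throughout I use that, by Theorem~\ref{DP_thm_diff}, the process $D_t(u):=\partial_u\tilde X_t(u)$ solves the recursion \reff{DP_rek_3}, is stationary and ergodic with $\sup_{u\in[0,1]}\|D_t(u)\|_q<\infty$, and that for fixed $u,u'$ the pair $(D_t(u),D_t(u'))$ is jointly stationary in $t$ (both are measurable functions of $\sF_t$). I also use that $u\mapsto\tilde X_t(u)$ is Lipschitz in $L^q$: integrating the bounded derivative gives $\|\tilde X_t(u)-\tilde X_t(u')\|_q\le|u-u'|\cdot\sup_s\|D_t(s)\|_q\le C|u-u'|^{\alpha_2}$ for $|u-u'|\le1$ (here $\alpha_2\le1$).

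The plan is to subtract \reff{DP_rek_3} at $u$ and at $u'$ and read off a contraction inequality for $m(u,u'):=\|D_0(u)-D_0(u')\|_{q/2}=\sup_t\|D_t(u)-D_t(u')\|_{q/2}$. Writing $\Delta_t:=D_t(u)-D_t(u')$ and $\mathbf D_{t-1}(v):=(D_{t-1}(v),\dots,D_{t-p}(v))'$, one obtains
\[
	\Delta_t=\langle\partial_1 G_{\varepsilon_t}(\tilde Y_{t-1}(u),u),\,(\Delta_{t-1},\dots,\Delta_{t-p})'\rangle+R_t,
\]
where $R_t$ gathers the two perturbation blocks
\[
	\langle\partial_1 G_{\varepsilon_t}(\tilde Y_{t-1}(u),u)-\partial_1 G_{\varepsilon_t}(\tilde Y_{t-1}(u'),u'),\,\mathbf D_{t-1}(u')\rangle
	\quad\text{and}\quad
	\partial_2 G_{\varepsilon_t}(\tilde Y_{t-1}(u),u)-\partial_2 G_{\varepsilon_t}(\tilde Y_{t-1}(u'),u'),
\]
each of which I further split into a $y$-increment (argument $u$ held fixed) and a $u$-increment (the random argument $\tilde Y_{t-1}(u')$ held fixed). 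The $y$-increments are bounded, via \reff{DP_cond_9}, by $\tilde C_i(\varepsilon_t)\cdot|\tilde Y_{t-1}(u)-\tilde Y_{t-1}(u')|_{1,q'}$ with $\|\tilde C_i(\varepsilon_t)\|_q\le C_i$; the $u$-increments are bounded, via the new condition \reff{DP_cond_20}, by (a conditional-on-$\sF_{t-1}$ version of) $D_i(\tilde Y_{t-1}(u'))\cdot|u-u'|^{\alpha_2}$ with $\|D_i(\tilde Y_{t-1}(u'))\|_q\le D_i$. In every such product one factor depends only on $\varepsilon_t$ (or is controlled after conditioning on $\sF_{t-1}$) and the other is $\sF_{t-1}$-measurable; I peel the $\varepsilon_t$-factor off with its $L^q$-bound and estimate the remaining product of two $L^q$ quantities ($|\tilde Y_{t-1}(u)-\tilde Y_{t-1}(u')|$ against $\mathbf D_{t-1}(u')$, or $D_i(\tilde Y_{t-1}(u'))$ against $\mathbf D_{t-1}(u')$) by Cauchy--Schwarz. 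This is precisely the step that drops the exponent from $q$ to $q/2$, and is the reason Assumption~\ref{DP_ass1}\ref{DP_ass1_l3}(a) or (b) is imposed. Using the $L^q$-Lipschitz bound for $\tilde Y$ and $\sup_u\|D_t(u)\|_q<\infty$ from Theorem~\ref{DP_thm_diff}, this yields $\sup_t\|R_t\|_{q/2}\le C|u-u'|^{\alpha_2}$ with $C$ independent of $u,u'$.

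For the leading linear term I would use that \reff{DP_continuous_cond} gives, along coordinate directions and with a single random variable $L=L(\varepsilon_0)$, $\|L\|_q\le1$, the pointwise bound $|\partial_i G_{\varepsilon_0}(y,u)|\le L\,\chi_i^{1/q'}$; by stationarity $L_t:=L(\varepsilon_t)$ is independent of $\sF_{t-1}$. Hence $|\langle\partial_1 G_{\varepsilon_t}(\tilde Y_{t-1}(u),u),(\Delta_{t-1},\dots,\Delta_{t-p})'\rangle|\le L_t\sum_{i=1}^p\chi_i^{1/q'}|\Delta_{t-i}|$ pointwise, and taking $L^{q/2}$-(quasi-)norms, using $\|L_t\|_{q/2}\le\|L_t\|_q\le1$ and independence, bounds this by $\kappa\cdot m(u,u')$ with a constant $\kappa<1$ (equal to $|\chi|_1$ when $q/2\ge1$, and in the remaining range the weighted quantity from the remark following Assumption~\ref{DP_ass1}, which is again $<1$). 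Combining, $m(u,u')\le\kappa\,m(u,u')+C|u-u'|^{\alpha_2}$ (up to the usual constant when $q/2<1$); since $m(u,u')\le2\sup_u\|D_0(u)\|_q<\infty$ by Theorem~\ref{DP_thm_diff}, one may subtract to obtain $m(u,u')\le C'|u-u'|^{\alpha_2}$ uniformly in $u,u'$, which is the assertion.

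I expect the main obstacle to be the bookkeeping in the second paragraph: splitting each increment of $\partial_i G_{\varepsilon_t}$ into a $y$-part and a $u$-part, identifying in each resulting product which factor is $\sigma(\varepsilon_t)$-measurable and which is $\sF_{t-1}$-measurable, and verifying that every term can be estimated in $L^{q/2}$ and no worse --- in particular, for the $u$-increment of $\partial_1 G$ one has to pass through a conditional form of \reff{DP_cond_20} and invoke conditional Jensen before Cauchy--Schwarz. A secondary point is to make sure the contraction constant stays strictly below $1$ in the $L^{q/2}$ norm; the a priori finiteness of $m(u,u')$, by contrast, is automatic from stationarity together with $\sup_u\|D_0(u)\|_q<\infty$.
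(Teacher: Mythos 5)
Your proposal is correct and follows essentially the same route as the paper's proof: subtract the recursion \reff{DP_rek_3} at $u$ and $u'$, isolate the contractive term $\langle\partial_1 G_{\varepsilon_t}(\cdot),(\Delta_{t-1},\dots,\Delta_{t-p})'\rangle$ (bounded in $L^{q/2}$ by $|\chi|_1$ times $m(u,u')$ via Assumption \ref{DP_ass1}\ref{DP_ass1_l3}(a) or (b)), control the perturbation terms by Cauchy--Schwarz using \reff{DP_cond_9}, \reff{DP_cond_20}, the $L^q$-Lipschitz property of $u\mapsto\tilde X_t(u)$ and $\|\sup_u|\partial_u\tilde X_t(u)|\|_q<\infty$ from Theorem \ref{DP_thm_diff}, and finally subtract using the a priori finiteness of $m(u,u')$. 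The only cosmetic difference is that the paper factors out $\partial_1 G_{\varepsilon_t}(\tilde Y_{t-1}(u'),u')$ rather than the version at $u$, which changes nothing.
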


If $\tilde X_t(u)$ has a twice continuously differentiable modification and $(y,u) \mapsto G_{\varepsilon}(y,u)$ is twice continuously differentiable, then the following recursion equation for $\partial_u^2 \tilde X_t(u)$ should hold:
\begin{eqnarray}
	\partial_u^2 \tilde X_t(u) &=& \langle \partial_1 G_{\varepsilon_t}(\tilde Y_{t-1}(u), u), \partial_u^2 \tilde Y_{t-1}(u)\rangle + \langle \partial_1^2 G_{\varepsilon_t}(\tilde Y_{t-1}(u),u) \partial_u \tilde Y_{t-1}(u), \partial_u \tilde Y_{t-1}(u)\rangle\nonumber\\
	&&\quad + 2\langle \partial_1 \partial_2 G_{\varepsilon_t}(\tilde Y_{t-1}(u),u), \partial_u \tilde Y_{t-1}(u)\rangle + \partial_2^2 G_{\varepsilon_t}(\tilde Y_{t-1}(u),u).\label{DP_rek_5}
\end{eqnarray}
Using the same techniques as in Theorem \ref{DP_thm_diff}, one can find similar conditions as in Assumption \ref{DP_ass1} such that a second (or even higher) order derivative process $\partial_u^2 \tilde X_t(u)$ exists. Let us point out an interesting anomaly in the case of second order derivatives that is also existent for higher order derivatives: Due to the additional products in \reff{DP_rek_5} it turns out that, in general, one has to assume $2q$-th moments of $\tilde X_t(u)$ to guarantee the existence of the $q$-th moment of $\partial_u^2 \tilde X_t(u)$. The formalization of this is beyond the scope of this paper, but in Proposition \ref{DP_hoelder_derivative} one already can see the imbalance of moments in the assumption and the obtained result.\\

\textbf{A simulation study: } To quantify the quality of the approximations given in Lemma \ref{DP_lemma_1} and Proposition \ref{DP_cor_taylor_exp}, we consider the tvARCH(1) model
\[
	X_{t,n} := \Big(a_0 + a_1\big(\frac{t}{n}\big)X_{t-1,n}^2\Big)^{1/2}\varepsilon_t
\]
with $a_0 := 0.2$, $a_1(u) = 0.95u^2$ and $\varepsilon_0 \sim N(0,1)$. Note that if $t/n$ tends to 1, the values of $X_{t,n}$ are more dependent to each other than for smaller values of $t/n$. We generated realizations of $X_{t,n}$, $\tilde X_t(\frac{t}{n})$ with $n = 500$ (see Figure \ref{DP_figure1_simu}(a),(b) for a realization of $X_{t,n}$ and $X_{t,n} - \tilde X_t(\frac{t}{n})$). In Figure \ref{DP_figure1_simu}(c) we have the plotted empirical 5\%- and 95\%-quantile curves of the difference $X_{t,n} - \tilde X_t(\frac{t}{n})$ for $N = 1000$ replications. It can be seen that with stronger dependence, the quality of the approximation $X_{t,n} \approx \tilde X_t(\frac{t}{n})$ gets worse as it is suggested by the bound in Lemma \ref{DP_lemma_1}.\\
Secondly we consider the approximation quality of $\tilde X_t(t/n)$ by $\tilde X_t(u)$ and $\tilde X_t(u) + (\frac{t}{n}-u)\partial_u \tilde X_t(u)$, respectively. Since these approximations are only working locally (for $|t/n-u| \ll 1$), we compare them by dividing the whole time line $t = 1,...,n$ into subsets $(u_i - b,u_i+b]$, where $b = 25$ and $u_i = (2i-1)b$ for $i = 1,...,10$. In Figure \ref{DP_figure1_simu}(d) empirical 5\%- and 95\%-quantile curves obtained from $N = 1000$ replications for the differences $\tilde X_t(\frac{t}{n})-\tilde X_t(u_i)$ and $\tilde X_t(\frac{t}{n})-\tilde X_t(u_i) - (\frac{t}{n} - u_i)\partial_u \tilde X_t(u_i)$ (where $t\in(u_i - b,u_i+b]$) are depicted, respectively. We emphasize that the improvement of the (pointwise) approximation $\tilde X_t(\frac{t}{n})$ by taking into account the derivative process is remarkable. However, both approximations again get worse if the dependence of $X_{t,n}$ to earlier values increases.

\begin{figure}[h!]
	\centering
	\begin{tabular}{cc}
		(a) \includegraphics[width=5cm]{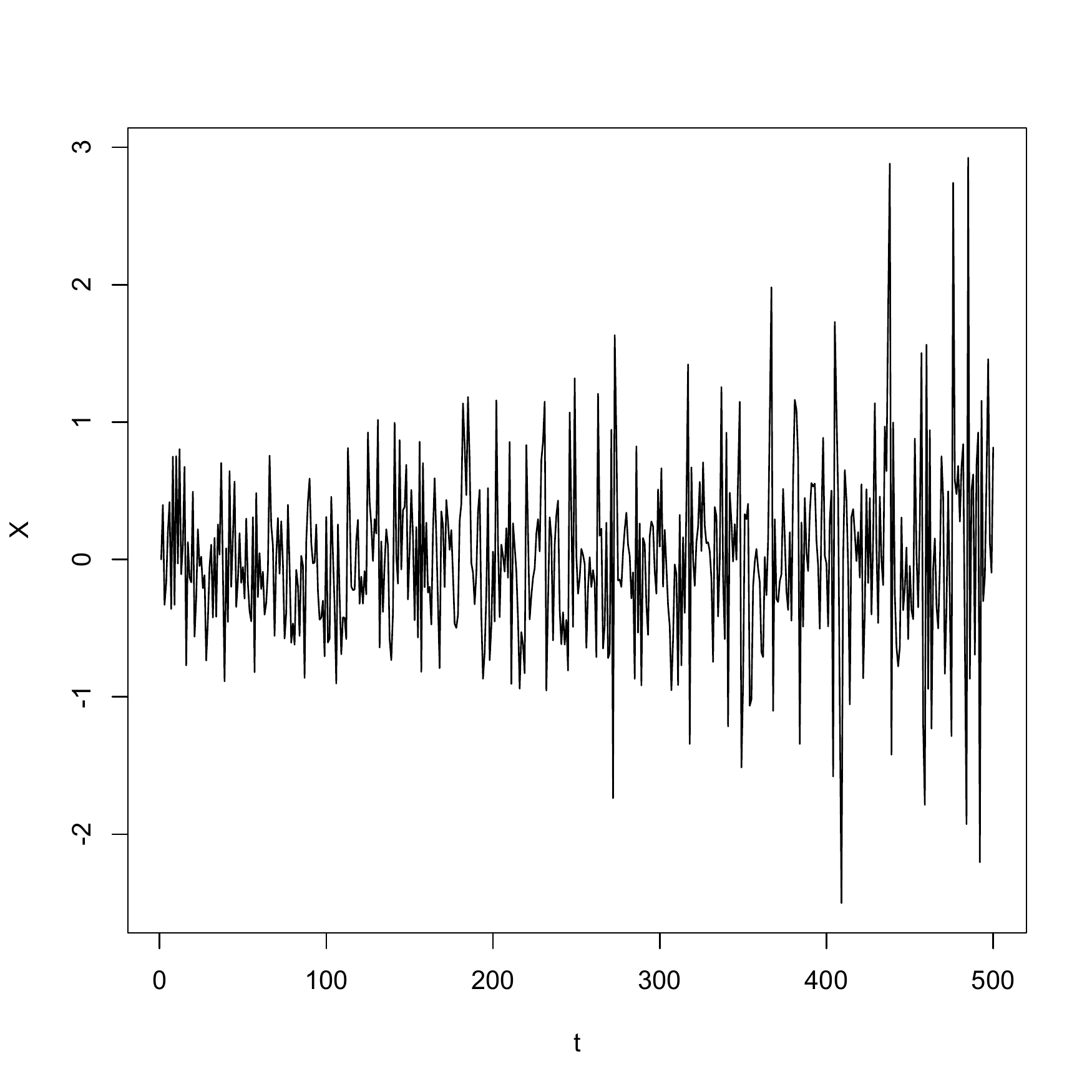} & (b) \includegraphics[width=5cm]{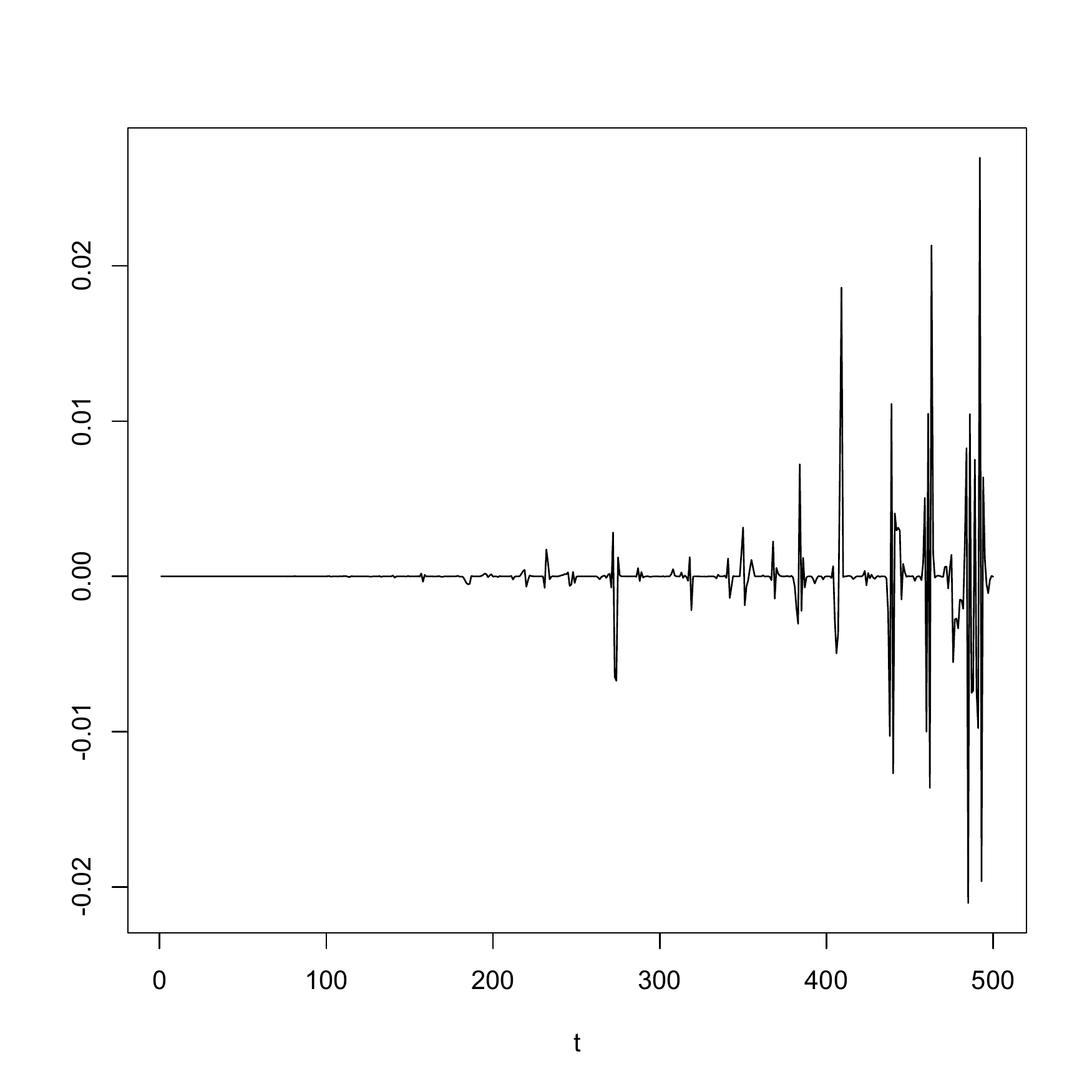}\\
		(c)  \includegraphics[width=5cm]{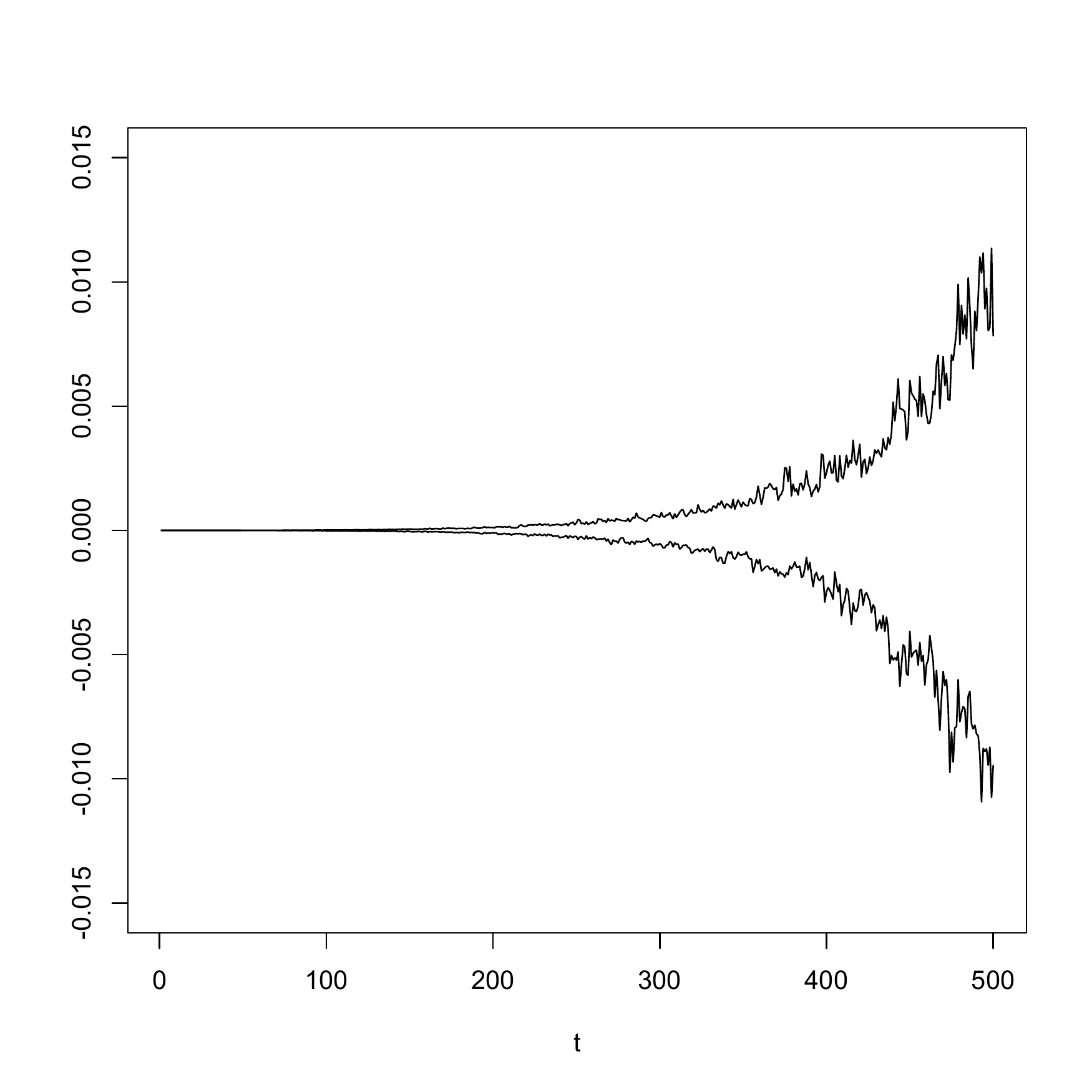} & (d) \includegraphics[width=5cm]{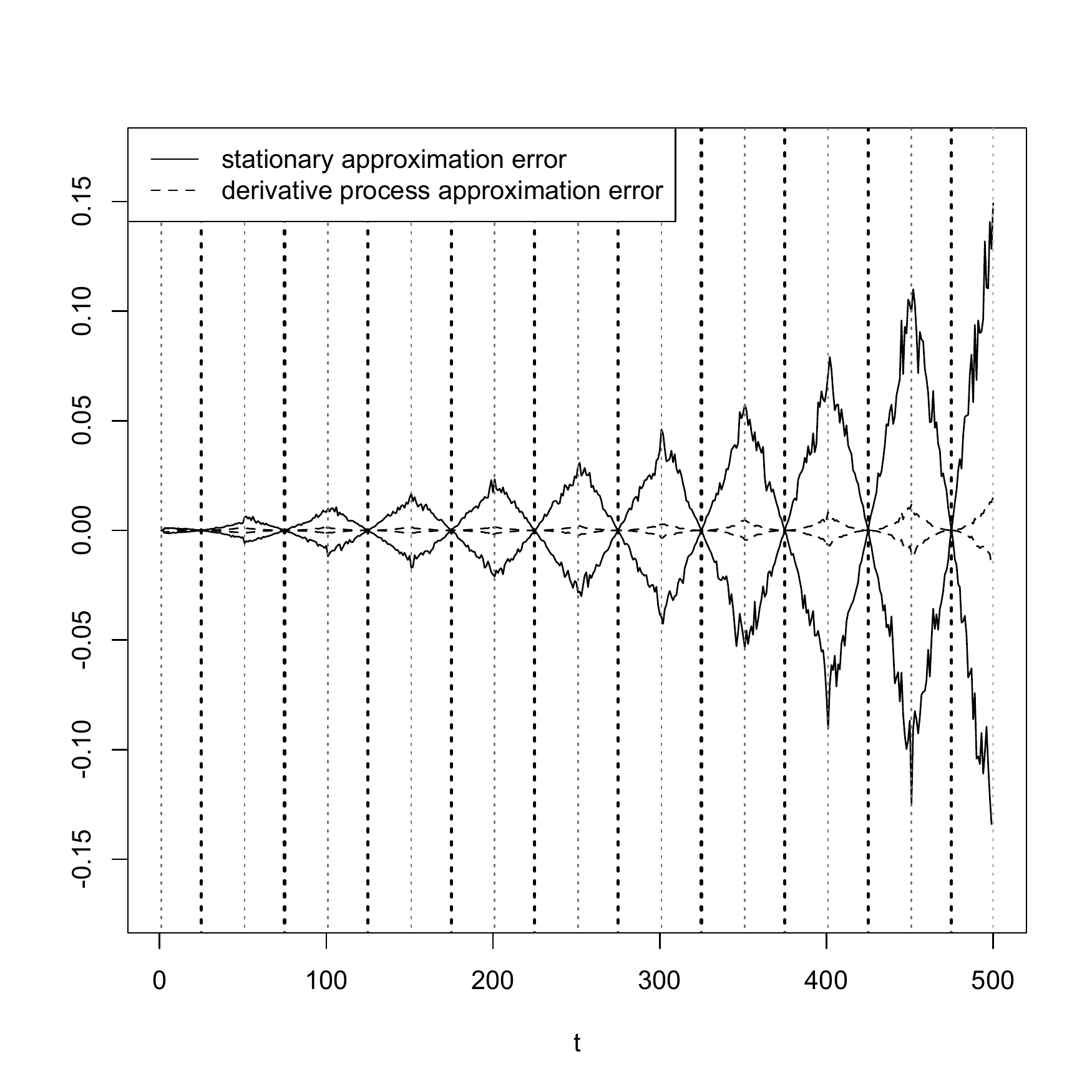}
	\end{tabular}
	\caption{Top: (a) Realization of one $X_{t,n}$, $t = 1,...,n$. (b) Difference $X_{t,n} - \tilde X_t(\frac{t}{n})$ for one realization.
	Bottom: (c) empirical 5\%- and 95\%-quantile curves of $X_{t,n}-\tilde X_t(\frac{t}{n})$ for $N = 1000$ replications. (d) Solid and Dashed: empirical 5\%- and 95\%-quantile curves of $\tilde X_{t}(\frac{t}{n})-\tilde X_t(u_i)$ and $\tilde X_{t}(\frac{t}{n}) - \tilde X_t(u_i) - (\frac{t}{n}-u_i)\partial_u \tilde X_t(u_i)$ for $t \in (u_i - b, u_i + b]$ (grey thin vertical dotted lines) and $N = 1000$ replications, respectively. Here, $b = 25$ and $u_i = (2i-1)b$ (black thick vertical dotted lines), $i = 1,...,10$.}
	\label{DP_figure1_simu}
\end{figure}


\section{Application to Maximum Likelihood estimation}
\label{DP_section4}

In this section we investigate the asymptotic properties of maximum likelihood estimates for parameter curves of locally stationary models which can be written in the form \reff{DP_rek_1}. The results are in particular derived by using the asymptotic results and the differential calculus of Section 2 and 3. More precisely we investigate the recursively defined model
\begin{equation}
	X_{t,n} = \tilde G_{\varepsilon_t}\big(X_{t-1,n},...,X_{t-p,n}, \theta_0\big(\frac{t}{n}\big)\big), \quad t = 1,...,n.\label{DP_rek_morespecific}
\end{equation}
where now the function $G_{\varepsilon}(y,u)$ from \reff{DP_rek_1} has been replaced by $\tilde G_{\varepsilon}(y,\theta_0(u))$ with the unknown parameter curve $\theta_0: [0,1] \to \Theta \subset \IR^d$ which is to be estimated. Our goal is to obtain estimators for $\theta_0(\cdot)$ based on $X_{t,n}$, $t = 1,...,n$ with a quasi maximum likelihood approach.

Suppose for the moment that $\varepsilon \mapsto G_{\varepsilon}(y,\theta)$ is continuously differentiable for all $\varepsilon,y,u$ and that the derivative $\partial_{\varepsilon}\tilde G_{\varepsilon}(y,\theta) \ge \delta_{G} > 0$ is bounded uniformly from below with some constant $\delta_G > 0$. This ensures that the new innovation $\varepsilon_t$ has an impact on the value of $X_{t,n}$ which is not too small. Under these conditions, there exists a continuously differentiable inverse $x\mapsto H(x,y,\theta)$ of $\varepsilon \mapsto G_{\varepsilon}(y,\theta)$ (see also Example \ref{DP_cor_dist_diff}).\\
Suppose that $\varepsilon_0$ has a continuous density $f_{\varepsilon}$. The negative conditional log likelihood of  $X_{t,n} = x$ given $(X_{t-1,n},...,X_{t-p,n}) = y$ and $\theta_0(\cdot) \equiv \theta$ is then
\begin{equation}
	\l(x,y,\theta) = -\log f_{\varepsilon}(H(x,y,\theta)) - \log \partial_x H(x,y,\theta).\label{DP_likelihood_standard_choice}
\end{equation}
In the following derivations, we do not make use of the specific structure of $\l$. This means especially that we allow for model misspecifications due to a false density $f_{\varepsilon}$. Many authors prefer the case of a Gaussian density $f_{\varepsilon}(x) = (2\pi)^{-1/2}\exp(-x^2/2)$ because then a minimizer $\theta$ of $\l$ can be interpreted as a minimum (quadratic) distance estimator (see \cite{dahlhausgiraitis1998} in the tvAR case, \cite{suhasini2006} in the tvARCH case).

Based on this we define $\l_{t,n}(\theta) := \l(X_{t,n}, Y_{t-1,n},\theta)$. Let $b\in(0,1)$ be a bandwidth $K$ a kernel function as considered in Assumption \ref{DP_ass_kernel}. We define the local negative log conditional likelihood
\[
	L_{n,b}(u,\theta) := \frac{1}{n}\sum_{t=p+1}^{n}K_b\Big(\frac{t}{n}-u\Big)\cdot \l_{t,n}(\theta).
\]
For $u \in [0,1]$, the estimator of $\theta_0(u)$ is defined via
\begin{equation}
	\hat \theta_b(u) := \arg \min_{\theta \in \Theta}L_{n,b}(u,\theta).\label{DP_mle_def}
\end{equation}

\textbf{Asymptotic results: } We will now discuss conditions such that $\hat \theta_b(\cdot)$ is consistent and asymptotically normal. A convenient way to formulate these results is to make a structural assumption on $\l$: We suppose that $\l$ is Lipschitz continuous in its components with at most polynomially increasing Lipschitz constant. To make this more precise, we introduce the class $\tilde \sL_{p+1}(M,C)$ of functions using the definition of $\sL_r(M,C)$ from Definition \ref{DP_hoelder_def}.

\begin{definition}[The class $\tilde \sL_{p+1}(M,C)$] We say that a function $g:\IR^{p+1} \times \Theta \to \IR$ is in the class $\tilde \sL_{p+1}(M,C)$ with $C = (C_{z},C_{\theta})$ and constants $C_{z},C_\theta \ge 0$ and $M \ge 0$ if
	for all $z\in \IR^{p+1},\theta\in\Theta$ it holds that $g(\cdot,\theta) \in \sL_{p+1}(M,C_z)$ and $g(z,\cdot) \in \sL_{d}\big(0,C_{\theta}(1+|z|_1^{M+1})\big)$.
\end{definition}

As in Section \ref{DP_section35}, a generalization to Hoelder-type conditions \reff{DP_hoelder_prop_polynom_2} in the first component of $g$ is possible.
It turns out in Theorem \ref{DP_mle_consistency} that the (pointwise) consistency of $\hat \theta_b$ can be obtained by posing conditions on the likelihood of the corresponding stationary process which is defined via $L(u,\theta) := \IE[\tilde \l_t(u,\theta)]$ with $\tilde \l_{t}(u,\theta) := \l(\tilde X_{t}(u), \tilde Y_{t-1}(u),\theta)$. Especially if $\l$ is taken to be of the form \reff{DP_likelihood_standard_choice} with $f_{\varepsilon}$ the standard Gaussian density, the properties of $L(u,\theta)$ are usually well-known from the maximum likelihood theory of the stationary process $X_t(\theta)$ and therefore are easy to verify (see also Example \ref{DP_mle_standard_example}).

To prove consistency, we have to inspect $L_{n,b}(u,\theta)$ which consists of summands of the form $\ell(X_{t,n},Y_{t-1,n},\theta)$. Since $\ell \in \tilde \sL_{p+1}(M,C)$, these terms behave like polynomials of degree $M+1$ in $X_{t,n}$. We mainly need the law of large numbers Proposition \ref{DP_prop_ergodic}(ii) and the statements about deterministic bias expansions Proposition \ref{DP_prop_biasexp}. The conditions therein require Assumption \ref{DP_ass_general}\ref{DP_ass_general_s1} with $q = M+1$. Translated to the Markov process setting in this section, we have to assume \ref{DP_ass1}\ref{DP_ass1_l1}, \ref{DP_ass1_l4} with $q = M+1$ by the results from Section \ref{DP_section2}.

\begin{theorem}[Pointwise and uniform consistency of $\hat \theta_b$]\label{DP_mle_consistency} Let Assumption \ref{DP_ass_kernel} hold. Assume that $\l \in \tilde \sL_{p+1}(M,C)$ for some $M \ge 0$. Suppose that Assumption \ref{DP_ass1}\ref{DP_ass1_l1}, \ref{DP_ass1_l4} holds with some $1 \ge \alpha > 0$ and $q = M+1$.\\
Furthermore suppose that for all $u\in[0,1]$, $\theta_0(u)\in \mbox{int}(\Theta)$ is the unique minimizer of $L(u,\theta)$ over $\theta \in \Theta$, where $\Theta \subset \IR^d$ is  a compact set. Then:
\begin{enumerate}
	\item[(i)] For all $u \in (0,1)$ with $b \to 0$ and $bn \to \infty$:
\[
	\hat \theta_b(u) \pto \theta_0(u).
\]
	\item[(ii)] If additionally $q > M+1$ and $b = o(n^{1-\frac{M+1}{q}})$ and $\theta_0(\cdot)$ is continuous, we have
\[
	\sup_{u\in[\frac{b}{2},1-\frac{b}{2}]}|\hat \theta_b(u) - \theta_0(u)| \pto 0.
\]
\end{enumerate}
\end{theorem}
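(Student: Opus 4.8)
The plan is to prove consistency of $\hat\theta_b$ by the classical argmin method: show that the localized criterion $L_{n,b}(u,\theta)$ converges to the stationary criterion $L(u,\theta)=\IE[\tilde\ell_t(u,\theta)]$ uniformly over $\theta\in\Theta$ (and, in (ii), also uniformly over $u$), and then invoke the assumed uniqueness of the minimizer $\theta_0(u)$ together with the compactness of $\Theta$. The point that makes this work is that, since $\ell\in\tilde\sL_{p+1}(M,C)$, for each fixed $\theta$ the summands $\ell_{t,n}(\theta)=\ell(Z_{t,n},\theta)$ are of the form $g(Z_{t,n})$ with $g=\ell(\cdot,\theta)\in\sL_{p+1}(M,C_z)$, and by Corollary \ref{DP_cor_ass_preservation} the process $X_{t,n}$ satisfies Assumptions \ref{DP_ass_general} and \ref{DP_ass_general2}; hence Proposition \ref{DP_lemma_preservation} transfers these assumptions to $\ell_{t,n}(\theta)$ under the reduced moment exponent, and the stationary approximation of $\ell_{t,n}(\theta)$ is $\tilde\ell_t(u,\theta)=\ell(\tilde Z_t(u),\theta)$, with $L(u,\theta)$ finite thanks to the polynomial envelope and $\sup_u\|\tilde X_0(u)\|_{M+1}<\infty$.

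For (i), applying the law of large numbers Proposition \ref{DP_prop_ergodic}(ii) to $g=\ell(\cdot,\theta)$ (legitimate since \ref{DP_ass1}\ref{DP_ass1_l1},\ref{DP_ass1_l4} with $q=M+1$ yield Assumption \ref{DP_ass_general}\ref{DP_ass_general_s1} for $X_{t,n}$ with $\tilde q=M+1$) gives $L_{n,b}(u,\theta)\to L(u,\theta)$ in $L^1$ for each fixed $\theta$, where the deterministic bias $\IE L_{n,b}(u,\theta)-L(u,\theta)=O(b^\alpha)+O(n^{-\alpha})+O(n^{-1})$ is handled by Proposition \ref{DP_prop_biasexp}, equations \reff{DP_cor_biasexp_eq1} and \reff{DP_cor_biasexp_eq2}, again applied to $g=\ell(\cdot,\theta)$; all constants here depend only on $C_z$ and on the process constants of $X_{t,n}$, hence are uniform in $\theta$. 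I would then upgrade this to uniform-in-$\theta$ convergence by a finite-net argument: using $\ell(z,\cdot)\in\sL_d(0,C_\theta(1+|z|_1^{M+1}))$ one has $|L_{n,b}(u,\theta)-L_{n,b}(u,\theta')|\le C_\theta|\theta-\theta'|_1\cdot\frac1{n}\sum_t|K_b(\frac{t}{n}-u)|\,(1+|Z_{t,n}|_1^{M+1})$, and the random envelope is $L^1$-bounded by Proposition \ref{DP_prop_ergodic}(ii) applied to $z\mapsto 1+|z|_1^{M+1}\in\sL_{p+1}(M,\cdot)$, while $L(u,\cdot)$ is Lipschitz with constant $C_\theta\,\IE[1+|\tilde Z_0(u)|_1^{M+1}]<\infty$. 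Covering the compact $\Theta$ by finitely many $\eta$-balls and letting $\eta\downarrow0$ after $n\to\infty$ yields $\sup_{\theta}|L_{n,b}(u,\theta)-L(u,\theta)|\pto0$; then, with $\delta:=\inf_{|\theta-\theta_0(u)|\ge\epsilon}\{L(u,\theta)-L(u,\theta_0(u))\}>0$, on the event $\{\sup_\theta|L_{n,b}(u,\theta)-L(u,\theta)|<\delta/2\}$, whose probability tends to $1$, the minimizer satisfies $|\hat\theta_b(u)-\theta_0(u)|<\epsilon$.

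For (ii), I run the same scheme but replace Proposition \ref{DP_prop_ergodic}(ii) by the uniform bound \reff{DP_prop_ergodic_eq1} of Proposition \ref{DP_prop_ergodic}(iii). With $q>M+1$, set $q':=q/(M+1)>1$; Proposition \ref{DP_lemma_preservation} then gives Assumptions \ref{DP_ass_general}\ref{DP_ass_general_s1} and \ref{DP_ass_general2}\ref{DP_ass_general2_m1} for both $\ell_{t,n}(\theta)$ and $1+|Z_{t,n}|_1^{M+1}$ with exponent $q'$, and \reff{DP_prop_ergodic_eq1} bounds the centered localized sums in $L^{q'}$ by $O(n^{1/q'-1}b^{-1})$ uniformly in $u$ — the $n^{-\alpha}b^{-1}$ term being droppable by Remark \ref{DP_remark_ergodic}(i) since \ref{DP_ass_general2}\ref{DP_ass_general2_m2} also holds (Corollary \ref{DP_cor_ass_preservation}) — which vanishes precisely under $b=o(n^{1-(M+1)/q})$. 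Combined with the uniform-in-$u$ bias bound from Proposition \ref{DP_prop_biasexp} and $\sup_u\IE[1+|\tilde Z_0(u)|_1^{M+1}]<\infty$, the finite-net argument gives $\sup_{u\in[b/2,1-b/2]}\sup_{\theta}|L_{n,b}(u,\theta)-L(u,\theta)|\pto0$; the argmin step then goes through because $\inf_{u}\inf_{|\theta-\theta_0(u)|\ge\epsilon}\{L(u,\theta)-L(u,\theta_0(u))\}>0$, which follows from continuity of $\theta_0(\cdot)$, joint continuity of $(u,\theta)\mapsto L(u,\theta)$ (dominated convergence via the polynomial envelope), and compactness of $[0,1]\times\Theta$. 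The main obstacle is this stochastic-equicontinuity step — controlling $\theta\mapsto L_{n,b}(u,\theta)$ by the random envelope $\frac1{n}\sum|K_b|(1+|Z_{t,n}|_1^{M+1})$ and showing this envelope is uniformly (over $u$) tight — since it is exactly where the polynomial growth exponent $M$ dictates the moment requirement $q=M+1$ (resp.\ $q>M+1$) and where the interplay with the bandwidth condition $b=o(n^{1-(M+1)/q})$ in (ii) becomes essential; the remaining ingredients (bias expansions, LLN, abstract argmin argument) are routine given Sections \ref{DP_section35} and \ref{DP_section3}.
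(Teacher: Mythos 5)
Your proposal is correct and follows essentially the same route as the paper: pointwise (resp.\ uniform-in-$u$) convergence of $L_{n,b}(u,\theta)$ to $L(u,\theta)$ via Theorem \ref{DP_prop_ergodic}(ii) (resp.\ (iii) with exponent $q/(M+1)>1$) plus the bias expansion of Proposition \ref{DP_prop_biasexp}, upgraded to uniformity in $\theta$ through the Lipschitz-in-$\theta$ envelope $C_\theta(1+|z|_1^{M+1})$ controlled by the same law of large numbers, and finished by the standard well-separated-minimum argmin argument (the paper's appendix makes the $\inf_u$-separation rigorous via Berge's maximum theorem, matching your compactness claim). Your explicit handling of the $n^{-\alpha}b^{-1}$ term via Assumption \ref{DP_ass_general2}\ref{DP_ass_general2_m2} from Proposition \ref{DP_rek_1_sol}(ii) is a minor point the paper glosses over, but otherwise the arguments coincide.
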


\begin{remark}
	Note that in nearly all cases, the conditions of Assumption \ref{DP_ass1}\ref{DP_ass1_l4} assumed in Theorem \ref{DP_mle_consistency} implicitly impose a Hoelder continuity condition on $\theta_0(\cdot)$; see also Example \ref{DP_mle_standard_example}.
\end{remark}

\begin{proof}[Proof of Theorem \ref{DP_mle_consistency}] (i) For fixed $u\in[0,1]$ and $\theta \in \Theta$, we have $\l(\cdot,\cdot,\theta) \in \sL_{p+1}(M,C_z)$. Application of Theorem \ref{DP_prop_ergodic}(ii) (see also Remark \ref{DP_remark_ergodic}(ii)) leads to
\[
	L_{n,b}(u,\theta) = \frac{1}{n}\sum_{t=1}^{n}K_b\Big(\frac{t}{n}-u\Big) \cdot \l(X_{t,n},Y_{t-1,n},\theta) \pto \IE \l(\tilde X_{t}(u), \tilde Y_{t-1}(u),\theta) = L(u,\theta).
\]
The function $\theta \mapsto L(u,\theta)$ is continuous since
\begin{eqnarray*}
	|L(u,\theta) - L(u,\theta')| &\le& \| \l(\tilde X_t(u),\tilde Y_{t-1}(u), \theta) - \l(\tilde X_t(u), \tilde Y_{t-1}(u),\theta')\|_1\\
	&\le& C_{\theta}\cdot |\theta - \theta'|_1\cdot \big(1 + \Big(\sum_{j=0}^{p}\|\tilde X_t(u)\|_{M+1}\Big)^{M+1}\big).
\end{eqnarray*}
It remains to show stochastic equicontinuity of $L_{n,h}(u,\theta)$: Define $h:\IR^{p+1}\to\IR$, $h(z) = C_{\theta}(1+|z|_1^{M+1})$. Fix $\eta > 0$. We have
\begin{eqnarray*}
	|L_{n,b}(u,\theta) - L_{n,b}(u,\theta')| &\le& |\theta-\theta'|_1 \cdot \frac{1}{n}\sum_{t=1}^{n}\Big|K_b\Big(\frac{t}{n}-u\Big)\Big|\cdot h(X_{t,n},Y_{t-1,n}).
\end{eqnarray*}
Obviously, $h \in \sL_{p+1}(M,C)$ with some constant $C > 0$. Application of Proposition \ref{DP_prop_ergodic}(ii) to $K/\int K \dif x$ and $h$ (see also Remark \ref{DP_remark_ergodic}(ii)) yields for all $u\in (0,1)$:
\begin{equation}
	\frac{1}{n}\sum_{t=1}^{n}\Big|K_b\Big(\frac{t}{n}-u\Big)\Big|\cdot h(X_{t,n},Y_{t-1,n}) \pto \int |K| \dif x \cdot \IE h(\tilde X_t(u), \tilde Y_{t-1}(u)) =: c(u).\label{DP_mle_consistency_eq1}
\end{equation}
Choosing $\delta = \frac{\eta}{2c(u)}$ yields
\begin{eqnarray*}
	&& \IP\Big(\sup_{|\theta - \theta'|_1 \le \delta}|L_{n,b}(u,\theta) - L_{n,b}(u,\theta')| > \eta\Big)\\
	&\le& \IP\Big(\Big|\frac{1}{n}\sum_{t=1}^{n}\Big|K_b\Big(\frac{t}{n}-u\Big)\Big|\cdot h(X_{t,n},Y_{t-1,n}) - c(u)\Big| > c(u)\Big) \to 0\quad (n\to\infty).
\end{eqnarray*}
This gives $\sup_{\theta \in \Theta}|L_{n,b}(u,\theta) - L(u,\theta)| \pto 0$. By standard arguments (cf. \cite{vandervaart1998}, Theorem 5.7), the proof is complete.

To prove (ii), we apply Theorem \ref{DP_prop_ergodic}(iii) on $\l(X_{t,n},Y_{t-1,n},\theta)$ with $\tilde q = \frac{q}{M+1} > 1$ (see also Remark \ref{DP_remark_ergodic}(ii)) to obtain for each $\theta \in \Theta$ that
\[
	\sup_{u\in[0,1]}\big|L_{n,b}(u,\theta) - \IE L_{n,b}(u,\theta)\big| = O_p(n^{\frac{M+1}{q}-1}b^{-1}).
\]
By Proposition \ref{DP_prop_biasexp} and the bounded variation of $K$, we have $\sup_{u\in[\frac{b}{2},1-\frac{b}{2}]}|\IE L_{n,b}(u,\theta) - L(u,\theta)| = O(b^{\alpha}) + O((nb)^{-1})$, which yields
\[
	\sup_{u\in[\frac{b}{2},1-\frac{b}{2}]}\big|L_{n,b}(u,\theta) - L(u,\theta)\big| \pto 0.
\]
Similarly we can strengthen \reff{DP_mle_consistency_eq1} to
\[
	 \sup_{u\in[\frac{b}{2},1-\frac{b}{2}]}\Big|\frac{1}{n}\sum_{t=1}^{n}\Big|K_b\Big(\frac{t}{n}-u\Big)\Big|\cdot h(X_{t,n},Y_{t-1,n}) - c(u)\Big| \pto 0.
\]
Now define $c := \inf_u c(u) > 0$ (by continuity of $c(\cdot)$). Choosing $\delta = \frac{\eta}{2c}$ yields
\begin{eqnarray*}
	&& \IP\Big(\sup_{u\in[\frac{b}{2},1-\frac{b}{2}]}\sup_{|\theta - \theta'|_1 \le \delta}|L_{n,b}(u,\theta) - L_{n,b}(u,\theta')| > \eta\Big)\\
	&\le& \IP\Big(\sup_{u\in[\frac{b}{2},1-\frac{b}{2}]}\Big|\frac{1}{n}\sum_{t=1}^{n}\Big|K_b\Big(\frac{t}{n}-u\Big)\Big|\cdot h(X_{t,n},Y_{t-1,n}) - c(u)\Big| > c\Big) \to 0\quad (n\to\infty).
\end{eqnarray*}
So we have seen that $\sup_{u\in[\frac{b}{2},1-\frac{b}{2}]}\sup_{\theta \in \Theta}|L_{n,b}(u,\theta) - L(u,\theta)| \pto 0$. Standard arguments give the result (see also the appendix).
\end{proof}

We now provide a central limit theorem for $\hat \theta_b$ including a bias decomposition. Let $\nabla$ denote the derivative with respect to $\theta$. To use a standard Taylor expansion from M-estimation theory, we need the existence of $\nabla \ell \in \tilde \sL_{p}(M',C')$ and $\nabla^2 \ell \in \tilde \sL_p(M'',C'')$. To apply the local central limit theorem \ref{DP_clt_local} to $\nabla L_{n,b}(u,\theta)$, we additionally need Assumption \ref{DP_ass_general_s2} with $q = 2(M'+1)$ which is fulfilled if Assumption \ref{DP_ass1}\ref{DP_ass1_l2} is valid for $q = 2(M'+1)$.

\begin{theorem}[A central limit theorem for $\hat \theta_b$]\label{DP_mle_clt} Additionally to Theorem \ref{DP_mle_consistency}(i), suppose that $\ell$ is twice continuously differentiable w.r.t. $\theta$ and
\begin{itemize}
	\item $\nabla \l \in \tilde \sL_{p+1}(M',C')$ for some $M' \ge 0$, 
	$\nabla^2 \l \in \tilde \sL_{p+1}(M'',C'')$ for some $M'' \ge 0$, 
	\item Assumption \ref{DP_ass1}\ref{DP_ass1_l1}, \ref{DP_ass1_l4} is fulfilled with $q = \max\{2(M'+1), M'' + 1\}$ and some $1 \ge \alpha' > 0$, Assumption \ref{DP_ass1}\ref{DP_ass1_l2} is fulfilled with $q = 2(M'+1)$.
\end{itemize}
Assume that the model is correct in the weak sense that $\IE[\nabla \tilde \l(u,\theta_0(u))|\sF_{t-1}] = 0$, i.e. $\nabla \tilde \l_t(u,\theta_0(u))$ is a martingale difference sequence with respect to $(\sF_t)$. Let $b\to 0$, $nb \to \infty$ and $bn^{1-2\alpha} = o(1)$.\\
(i) Then we have for $nb^{1+2\alpha'} = o(1)$:
\begin{equation}
	\sqrt{nb}\big(\hat \theta_b(u) - \theta_0(u)\big) \dto N\big(0, \int K(x)^2 \dif x\cdot V(u)^{-1}I(u)V(u)^{-1} \big),\label{DP_mle_clt_eq1}
\end{equation}
where $I(u) := \IE[\nabla \tilde \l_t(u,\theta_0(u)) \nabla \tilde \l_t(u,\theta_0(u))']$ and $V(u) := \nabla^2 L(u,\theta_0(u))$ is assumed to be positive definite.\\
(ii) If additionally $\nabla \l$ is continuously differentiable and Assumption \ref{DP_ass1}\ref{DP_ass1_l3} is fulfilled for $q = M'+1$, then we have for  $nb^3 = O(1)$:
\begin{eqnarray*}
	&&\sqrt{nb}\big(\hat \theta_b(u) - \theta_0(u) - b \cdot V^{-1}(u) \IE \partial_u \nabla \l(\tilde X_t(u), \tilde Y_{t-1}(u),\theta_0(u))\cdot \int K(x) x \dif x\big)\\
	&\dto& N\big(0, \int K(x)^2 \dif x\cdot V(u)^{-1}I(u)V(u)^{-1} \big),
\end{eqnarray*}
so the result \reff{DP_mle_clt_eq1} remains true if $K$ is symmetric.
\end{theorem}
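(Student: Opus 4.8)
The plan is to run the classical M-estimation argument, using the tools of Sections \ref{DP_section35}--\ref{DP_section2} to control the non-stationarity. Since $\theta_0(u)\in\mathrm{int}(\Theta)$ and $\hat\theta_b(u)\pto\theta_0(u)$ by Theorem \ref{DP_mle_consistency}(i), with probability tending to one the first-order condition $\nabla L_{n,b}(u,\hat\theta_b(u))=0$ holds, and a Taylor expansion of $\theta\mapsto\nabla L_{n,b}(u,\theta)$ around $\theta_0(u)$ gives
\[
	\sqrt{nb}\,\big(\hat\theta_b(u)-\theta_0(u)\big) = -\,\bar V_{n,b}(u)^{-1}\,\sqrt{nb}\,\nabla L_{n,b}(u,\theta_0(u)),\qquad \bar V_{n,b}(u):=\int_0^1\nabla^2 L_{n,b}\big(u,\theta_0(u)+s(\hat\theta_b(u)-\theta_0(u))\big)\dif s .
\]
It then suffices to show (a) $\bar V_{n,b}(u)\pto V(u)$ (so that $\bar V_{n,b}(u)$ is eventually invertible, as $V(u)$ is positive definite), and (b) a central limit theorem with the claimed bias for $\sqrt{nb}\,\nabla L_{n,b}(u,\theta_0(u))$.

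For (a) I would repeat the uniform-in-$\theta$ argument from the proof of Theorem \ref{DP_mle_consistency}: the law of large numbers Proposition \ref{DP_prop_ergodic}(ii) applied to $\nabla^2\l(\cdot,\theta)\in\sL_{p+1}(M'',C'')$ yields $\nabla^2 L_{n,b}(u,\theta)\pto\nabla^2 L(u,\theta)$ for each fixed $\theta$, while $\nabla^2\l\in\tilde\sL_{p+1}(M'',C'')$ (Lipschitz in $\theta$ with polynomially growing constant) gives stochastic equicontinuity in $\theta$ via Proposition \ref{DP_prop_ergodic}(ii) applied to a dominating function in $\sL_{p+1}(M'',\cdot)$; together with continuity of $\theta\mapsto\nabla^2 L(u,\theta)$ and $\hat\theta_b(u)\pto\theta_0(u)$ this gives $\bar V_{n,b}(u)\pto\nabla^2 L(u,\theta_0(u))=V(u)$. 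The moment requirement $q\ge M''+1$ is covered by $q=\max\{2(M'+1),M''+1\}$.

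For (b), fix $c\in\IR^d$ and set $W_{t,n}:=c^{\top}\nabla\l(Z_{t,n},\theta_0(u))$ with $Z_{t,n}=(X_{t,n},\dots,X_{t-p,n})^{\top}$; by Corollary \ref{DP_cor_ass_preservation}, the invariance Proposition \ref{DP_lemma_preservation}(i) together with the extension \reff{DP_hoelder_prop_polynom_2} and $\nabla\l\in\tilde\sL_{p+1}(M',C')$, the process $W_{t,n}$ satisfies Assumption \ref{DP_ass_general}\ref{DP_ass_general_s1},\ref{DP_ass_general_s2} and \ref{DP_ass_general2}\ref{DP_ass_general2_m1} with $q=2(M'+1)$ and stationary approximation $\tilde W_t(v)=c^{\top}\nabla\l(\tilde Z_t(v),\theta_0(u))$. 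Then I split
\[
	\sqrt{nb}\,c^{\top}\nabla L_{n,b}(u,\theta_0(u)) = \frac{1}{\sqrt{nb}}\sum_{t}K\Big(\tfrac{t/n-u}{b}\Big)\big\{W_{t,n}-\IE W_{t,n}\big\} \;+\; \sqrt{nb}\cdot\frac1n\sum_t K_b\big(\tfrac{t}{n}-u\big)\,\IE W_{t,n}.
\]
The first summand converges by the local CLT Theorem \ref{DP_clt_local} to $N\big(0,\int K(x)^2\dif x\cdot c^{\top}\sigma^2_W(u)c\big)$; since $\nabla\tilde\l_t(u,\theta_0(u))$ is a martingale difference sequence w.r.t.\ $(\sF_t)$, all lag-$k$ ($k\ge1$) covariances in the long-run variance vanish, so $\sigma^2_W(u)=c^{\top}I(u)c$, and the condition $\sqrt{nb}\,n^{-\alpha}\to0$ required by Theorem \ref{DP_clt_local} is ensured by $bn^{1-2\alpha}=o(1)$; the Cram\'{e}r--Wold device then yields $\frac{1}{\sqrt{nb}}\sum_t K(\cdot)\{\nabla\l(Z_{t,n},\theta_0(u))-\IE[\,\cdot\,]\}\dto N\big(0,\int K^2\dif x\cdot I(u)\big)$. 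For the deterministic summand, Proposition \ref{DP_general_mean_diff} applied to $g(\cdot)=\nabla\l(\cdot,\theta_0(u))\in\sL_{p+1}(M',\cdot)$ gives $\IE\nabla\l(Z_{t,n},\theta_0(u))=\phi(t/n)+O(n^{-\alpha'})$ with $\phi(v):=\IE\nabla\l(\tilde Z_t(v),\theta_0(u))$, and $\phi(u)=0$ because $\nabla\tilde\l_t(u,\theta_0(u))$ has mean zero. In case (i) the Hölder bound of Lemma \ref{DP_lemma_1} gives $|\phi(t/n)|=O(b^{\alpha'})$ on the support of $K_b(\cdot-u)$, so the deterministic summand is $O(\sqrt{nb^{1+2\alpha'}})=o(1)$ under $nb^{1+2\alpha'}=o(1)$, and Slutsky with $\bar V_{n,b}(u)\pto V(u)$ gives \reff{DP_mle_clt_eq1}. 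In case (ii), Assumption \ref{DP_ass1}\ref{DP_ass1_l3} (with $q=M'+1$) and continuous differentiability of $\nabla\l$ make $\phi$ continuously differentiable with $\phi'(u)=\IE[\partial_u\nabla\l(\tilde X_t(u),\tilde Y_{t-1}(u),\theta_0(u))]$ via the second part of Proposition \ref{DP_general_mean_diff}; expanding $\phi(t/n)=(t/n-u)\phi'(u)+o(|t/n-u|)$ and using $\frac1n\sum_t K_b(\tfrac{t}{n}-u)(\tfrac{t}{n}-u)=b\int xK(x)\dif x+O(n^{-1})$ turns the deterministic summand into $\sqrt{nb}\cdot b\int xK(x)\dif x\cdot\phi'(u)+o(1)$ under $nb^3=O(1)$, which after multiplication by $-\bar V_{n,b}(u)^{-1}\pto -V(u)^{-1}$ produces precisely the bias subtracted on the left-hand side; for symmetric $K$ it vanishes and (i) is recovered.

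The main obstacle is pinning down this deterministic bias correctly: the stationary score has zero mean only when its parameter argument is $\theta_0(v)$ with $v$ equal to the time argument of the stationary approximation, so the bias is genuinely produced by the $u$-derivative of $v\mapsto\IE\nabla\l(\tilde Z_t(v),\theta_0(u))$ (not by $\nabla^2 L$), and computing that derivative uses the differentiability of $v\mapsto\tilde X_t(v)$ and the chain rule from Proposition \ref{DP_general_mean_diff}, ultimately relying on Theorem \ref{DP_thm_diff}. Closely tied to this is the moment bookkeeping — $2(M'+1)$ moments for the local CLT of the score, $M'+1$ for the differentiability producing the bias, and $M''+1$ for the Hessian — which forces $q=\max\{2(M'+1),M''+1\}$; once these are in place the Hessian convergence, the first-order condition, and the CLT for the centered score are routine applications of the results of Sections \ref{DP_section35} and \ref{DP_section3}.
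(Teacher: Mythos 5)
Your proposal follows essentially the same route as the paper: first-order condition and Taylor expansion in $\theta$, uniform convergence of the Hessian $\nabla^2 L_{n,b}$ by mimicking the consistency proof, the local CLT (Theorem \ref{DP_clt_local}) for the centered score with the martingale-difference property collapsing the long-run variance to $I(u)$, and the deterministic bias handled by replacing $X_{t,n}$ with $\tilde X_t(t/n)$ (cost $O(\sqrt{n^{1-2\alpha}b})$) and then invoking the Hölder bound in case (i) and the derivative process via Proposition \ref{DP_prop_biasexp} / Proposition \ref{DP_general_mean_diff} in case (ii). The argument and the bookkeeping of moments and rates match the paper's proof.
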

\begin{proof}[Proof of Theorem \ref{DP_mle_clt}:] 
The conditions on $\nabla^2 \l $ imply that $u \mapsto \nabla^2 L(u,\theta) = \IE[\nabla^2 \tilde \l_t(u,\theta)]$ is continuous. Note that by Theorem \ref{DP_clt_local}, we have
\begin{eqnarray*}
	&& \sqrt{nb}\nabla L_{n,b}(u,\theta_0(u))\\
	&=& \frac{1}{\sqrt{nb}}\sum_{t=p+1}^{n}K\Big(\frac{t/n-u}{b}\Big)\Big(\nabla \l(X_{t,n},Y_{t-1,n},\theta_0(u)) - \IE \nabla \l(X_{t,n},Y_{t-1,n},\theta_0(u))\Big)\\
	&\dto& N\big(0,\int K(x)^2 \dif x \cdot \sigma^2(u)\big),
\end{eqnarray*}
where $\sigma^2(u) = \big\|\sum_{l=0}^{\infty}P_0 \nabla \tilde \l_t(u,\theta_0(u))\big\|_2^2 = I(u)$ by the martingale difference property. Furthermore, 
\begin{eqnarray*}
	&& \frac{1}{\sqrt{nb}}\sum_{t=1}^{n}K\Big(\frac{t/n-u}{b}\Big)\IE \nabla \l(X_{t,n},Y_{t-1,n},\theta_0(u))\\
	&=& \frac{1}{\sqrt{nb}}\sum_{t=1}^{n}K\Big(\frac{t/n-u}{b}\Big)\big(\IE \nabla \l(X_{t,n},Y_{t-1,n},\theta_0(u)) - \IE\nabla \l(\tilde X_{t}(u),\tilde Y_{t-1}(u),\theta)\big)\\
	&=& \frac{1}{\sqrt{nb}}\sum_{t=1}^{n}K\Big(\frac{t/n-u}{b}\Big)\big(\IE \nabla \l(\tilde X_{t}(t/n),\tilde Y_{t-1}(t/n),\theta_0(u)) - \IE\nabla \l(\tilde X_{t}(u),\tilde Y_{t-1}(u),\theta)\big)\\
	&&\quad\quad + O(\sqrt{n^{1-2\alpha}b}).
\end{eqnarray*}
Proposition \ref{DP_prop_biasexp} gives that the first term is $O(\sqrt{nb^{1+2\alpha'}})$ in the case of (i). In case of (ii), the first term has the form 
\[
	\frac{1}{\sqrt{nb}}\sum_{t=1}^{n}K\Big(\frac{t/n-u}{b}\Big)\cdot \big(\frac{t}{n} - u\big)\cdot \IE\big[\partial_u \nabla \l(\tilde X_t(u), \tilde Y_{t-1}(u),\theta)\big]\big|_{\theta = \theta_0(u)} + O((nb)^{-1/2}) + o(\sqrt{nb^3})
\]
and is $o(\sqrt{nb^3}) + O((nb)^{-1/2})$ if $K$ is symmetric. Since $\nabla^2\l$ fulfills the same assumptions as $\l$ in Theorem \ref{DP_mle_consistency}, we can mimic its proof and obtain
\[
	\sup_{\theta \in \Theta}|\nabla^2 L_{n,b}(u,\theta) - \nabla^2 L(u,\theta)| \pto 0.
\]
By continuity of $\theta \mapsto \nabla^2 L(u,\theta)$, we obtain for each sequence $\tilde \theta_n \pto \theta_0(u)$ that
\[
	|\nabla^2 L_{n,b}(u,\tilde \theta_n) -V(u)| \le |\nabla^2 L_{n,b}(u,\tilde \theta_n) - \nabla^2 L(u,\tilde \theta_n)|  + |\nabla^2 L(u,\tilde \theta_n) - \nabla^2 L(u,\theta_0(u))| \pto 0.
\]
Standard arguments now give the result.
\end{proof}

The results of Theorem \ref{DP_mle_clt}(ii) show that under the existence of derivative processes, one can choose the MSE-optimal rate $b \sim n^{-1/3}$ for the bandwidth, keeping $\hat \theta_b(u)$ still asymptotically unbiased. This result can be used in several applications, for instance for bootstrapping $X_{t,n}$ via the recursion \reff{DP_rek_morespecific} with estimated errors $\hat \varepsilon_t = H(X_{t,n},Y_{t-1,n},\hat \theta_b(t/n))$, $t = p+1,...,n$.

An important special case is the case of Gaussian conditional likelihoods combined with nonlinear autoregressive models. Specific examples for these are given in Example \ref{DP_example_model}.

\medskip
\begin{example}[Nonlinear autoregressive models]\label{DP_mle_standard_example}
	In this example we discuss the model $\tilde G_{\varepsilon}(y,\theta) = \mu(y,\theta) + \sigma(y,\theta)\varepsilon$, where $\mu, \sigma:\IR^p \times \Theta \to \IR$ satisfy
	\begin{equation}
		\sup_{\theta}\sup_{y\not=y'}\frac{|\mu(y,\theta) - \mu(y',\theta)|}{|y-y'|_{\chi,1}} + \sup_{\theta}\sup_{y\not=y'}\frac{|\sigma(y,\theta) - \sigma(y',\theta)|}{|y-y'|_{\chi,1}}\|\varepsilon_0\|_2 \le 1\label{DP_mle_standard_example_eq1}
	\end{equation}
	with some $\chi \in \IR^{p}_{\ge 0}$ with $|\chi|_1 < 1$. Assume that $\IE \varepsilon_0 = 0$ and $\IE \varepsilon_0^2 = 1$ and that  $\theta_0$ is Hoelder-continuous with exponent $\alpha$. Then Assumption \ref{DP_ass1}\ref{DP_ass1_l2} is fulfilled with $q = 2$.
	
	If we choose $f_{\varepsilon}$ to be the standard Gaussian density, we obtain from \reff{DP_likelihood_standard_choice}:
	\begin{equation}
		\l(x,y,\theta) = \frac{1}{2}\Big(\frac{x - \mu(y,\theta)\big)}{\sigma(y,\theta)}\Big)^2 - \frac{1}{2}\log \sigma^2(y,\theta) + \mbox{const.}\label{DP_mle_standard_example_eq4}
	\end{equation}
	Furthermore assume that
	\begin{equation}
		\sup_{y}\sup_{\theta\not=\theta'}\frac{|\mu(y,\theta) - \mu(y,\theta')|}{|\theta-\theta'|_1\cdot(1+|y|_1)} < \infty, \quad\quad \sup_{y}\sup_{\theta\not=\theta'}\frac{|\sigma(y,\theta) - \sigma(y,\theta')|}{|\theta-\theta'|_1\cdot(1+|y|_1)} < \infty.\label{DP_mle_standard_example_eq2}
	\end{equation}
	 Let $\sigma(\cdot) \ge \delta_{\sigma}$ be uniformly bounded from below with some $\delta_{\sigma} > 0$. Then $\l \in \tilde\sL_{p+1}(1,C)$ with some $C > 0$, and Assumption \ref{DP_ass1}\ref{DP_ass1_l1},\ref{DP_ass1_l4} is fulfilled with $q = 2$ and $\alpha$ from above.
	
	Fix $u\in[0,1]$. Suppose that
	\[
		\mu(\tilde Y_{t-1}(u),\theta) = \mu(\tilde Y_{t-1}(u),\theta_0(u)) \quad\mbox{ and }\quad \sigma(\tilde Y_{t-1}(u),\theta) = \sigma(\tilde Y_{t-1}(u),\theta_0(u)) \quad a.s.
	\]
	implies $\theta = \theta_0(u)$. Then $\theta \mapsto L(u,\theta)$ has a unique minimum in $\theta = \theta_0(u)$ since $\log(x) \le x-1$ if and only if $x = 1$ and $x^2 \ge 0$ if and only if $x = 0$ and, omitting the argument $\tilde Y_{t-1}(u)$,
	\begin{eqnarray*}
		2\big(L(u,\theta) - L(u,\theta_0(u))\big) = \IE \Big(\frac{\mu(\theta)-\mu(\theta_0(u))}{\sigma(\theta)}\Big)^2 + \IE\Big[\log\frac{\sigma(\theta)^2}{\sigma(\theta_0(u))^2} - 1 + \frac{\sigma(\theta_0(u))^2}{\sigma(\theta)^2}\Big] \ge 0.
	\end{eqnarray*}
	If additionally $\Theta$ is compact and $\theta_0(u) \in \mbox{int}(\Theta)$, the assumptions of Theorem \ref{DP_mle_consistency} are fulfilled and we obtain for $\hat \theta_b$ defined by \reff{DP_mle_def}:
	 \[
	 	\hat \theta_b(u) \pto \theta_0(u).
	 \]
	 We now will show asymptotic normality of $\hat \theta_b$. To keep the presentation simple, we will assume $\sigma(\cdot,\cdot) \equiv 1$, $\IE \varepsilon_0^4 < \infty$ and replace $\IE \varepsilon_0^2 = 1$ by $\IE \varepsilon_0^2 = \sigma_0^2 > 0$. Note that Assumption \ref{DP_ass1}\ref{DP_ass1_l2} is fulfilled with $q = 4$. Then, omitting the arguments $(y,\theta)$ of $\mu$, we have
	 \[
	 	\nabla \l(x,y,\theta) = -(x-\mu) \nabla \mu, \quad\quad \nabla^2 \l(x,y,\theta) = \nabla \mu\cdot \nabla \mu' - (x-\mu)\nabla^2 \mu.
	 \]
	 This shows $\IE[\nabla \l(\tilde X_t(u),\tilde Y_{t-1}(u),\theta_0(u))|\sF_{t-1}] = 0$ and $I(u) = \IE[\nabla \l \cdot \nabla \l'] = \sigma_0^2 \IE[\nabla \mu\cdot \nabla \mu'] = \sigma_0^2 V(u)$ with $V(u) := \nabla^2 L(u,\theta_0(u))$. If additionally
	 \begin{equation}
	 	\sup_{\theta}\sup_{y\not=y'}\frac{|\nabla \mu(y,\theta) - \nabla \mu(y',\theta)|_1}{|y-y'|_1} < \infty, \quad\quad \sup_{y}\sup_{\theta\not=\theta'}\frac{|\nabla \mu(y,\theta) - \nabla \mu(y,\theta')|_1}{|\theta-\theta'|_1(1+|y|_1)} < \infty\label{DP_mle_standard_example_eq3}
	 \end{equation}
	 and similar assumptions are fulfilled for $\nabla^2 \mu$, then we have $\nabla \l, \nabla^2 \l \in \tilde \sL_{p+1}(1,C')$ with some $C' > 0$. This shows that all conditions of the first part of Theorem \ref{DP_mle_clt} are fulfilled and we obtain for $b\to 0$, $nb\to\infty$ and $nb^3 = o(1)$:
	 \begin{equation}
	 	\sqrt{nb}\big(\hat \theta_b(u) - \theta_0(u)\big)  \dto N\big(0, \sigma_0^2 \cdot V(u)^{-1}\big).\label{DP_mle_standard_example_eq5}
	 \end{equation}
	 If additionally, $\mu, \nabla \mu$ and $\theta_0$ are continuously differentiable and
	 \begin{equation}
	 	\sup_{\theta}\sup_{y\not=y'}\frac{|\partial_i \mu(y,\theta) - \partial_i \mu(y',\theta)|_1}{|y-y'|_{1}} < \infty, \quad (i = 1,2),\label{DP_mle_standard_example_eq6}
	 \end{equation}
	 then $\nabla \l$ is continuously differentiable and Assumption \ref{DP_ass1}\ref{DP_ass1_l3} is fulfilled with $q = 2$. If $K$ is symmetric, all conditions of the second  part of Theorem \ref{DP_mle_clt} are fulfilled and we obtain \reff{DP_mle_standard_example_eq5} even if $nb^3 = O(1)$.
\end{example}

We close this section by using the results of Example \ref{DP_mle_standard_example} in a more specific example of the tvExpAR(1) process which is a locally stationary version of the ExpAR(1) process discussed in \cite{jones1978}. Up to now, there is no asymptotic theory available for parameter estimators in this model; we show that our theory immediately provides consistency and asymptotic normality of the corresponding maximum likelihood estimator.

\begin{example}[Maximum likelihood estimation in the tvExpAR(1) process] Assume that there exists $\theta_0:[0,1] \to \Theta$ (where the image of $\theta_0$ is in the interior of $\Theta$) with $\Theta := \{\theta \in \IR: 0 \le \theta \le \rho\}$ and some fixed $\rho > 0$, $0 < |a_0| < 1$ such that
\[
	X_{t,n} = a_0 \exp\Big(-\theta_0\big(\frac{t}{n}\big) X_{t-1,n}^2\Big) X_{t-1,n} + \varepsilon_t, \quad t = 1,...,n.
\]
Assume that $\IE \varepsilon_0 = 1$, $\IE \varepsilon_0^2 = \sigma_0^2 > 0$ and $\IE \varepsilon_0^4 < \infty$. It is easily seen that this model fulfills the smoothness assumptions \reff{DP_mle_standard_example_eq1}, \reff{DP_mle_standard_example_eq2},  \reff{DP_mle_standard_example_eq3} and \reff{DP_mle_standard_example_eq6} with $\mu(y,\theta) := a_0 \exp(-\theta y^2) y$ and $\sigma(\cdot,\cdot) \equiv 1$. Let $\tilde X_t(u)$ denote the corresponding stationary approximation of $X_{t,n}$. Identifiability of $\theta$ is obtained due to
\[
	\IE[(\mu(\tilde X_t(u),\theta) - \mu(\tilde X_t(u),\theta'))^2] \ge a_0^2 \IE[\exp(-2 \rho \tilde X_0(u)^2) \tilde X_0(u)^6] \cdot |\theta - \theta'|^2,
\]
since $\IE[\exp(-2\rho \tilde X_t(u)^2) \tilde X_t(u)^6] = 0$ would imply $\tilde X_t(u) = 0$ a.s. which is a contradiction to $\IE[\tilde X_t(u)^2] \ge \sigma_0^2$ which follows from the recursion of $\tilde X_t(u)$. Let $\hat \theta_b(u)$ be defined by \reff{DP_mle_def} based on the likelihood \reff{DP_mle_standard_example_eq4} and let Assumption \ref{DP_ass_kernel} hold. We obtain for $b \to 0$, $bn \to \infty$:
\[
	\hat \theta_b(u) \pto \theta_0(u),
\]
and for $nb^3 = O(1)$:
\[
	\sqrt{nb}\big(\hat \theta_b(u) - \theta_0(u)\big) \dto N(0, \sigma_0^2 V(u)^{-1}),
\]
where $V(u) = a_0^2 \IE[\exp(-2 \theta_0(u) \tilde X_0(u)^2)\tilde X_0(u)^6]$.

\end{example}

\section{Concluding Remarks}
\label{DP_section5}
	In this paper, we have made some steps towards a general asymptotic theory for nonlinear locally stationary processes. A key role in our derivations is played by the local stationary approximation, the derivative process, the corresponding Taylor-expansion and the resulting differential calculus.

Just based on this local approximation we were able to prove laws of large numbers, a central limit theorem, and stochastic and deterministic bias approximations - results which have not been proved so far for general locally stationary processes. For example for the global strong law of large numbers we need only the existence of the first order moment of the process. It should be noted that for these results we concluded from local assumptions to global results such as the strong law of large numbers and the central limit theorem. A simulation displayed in Figure \ref{DP_figure1_simu} shows that the pointwise approximation of $X_{t,n}$ by $\tilde X_t(u)$ and $\partial_u \tilde X_t(u)$ works quite well.

We also showed that these results can be applied to a general nonlinear time series model with a nonstationary Markov structure which includes several nonlinear models. As another application we derived the asymptotic properties of the maximum likelihood estimator for such processes. The result is proved by applying the differential calculus of the derivative process.

\section*{Acknowledgements}

We are very grateful to the associate editor and a referee whose comments lead to a considerable improvement of the paper. In particular the observation that in many cases differentiability in $L^q$ is sufficient (see Remark \ref{remark_differentiability}) was pointed out by the referee.
We also gratefully acknowledge support by Deutsche Forschungsgemeinschaft
through the Research Training Group RTG 1653.

\newpage

\section{Supplement A}
\label{suppA}
This supplement contains the remaining proofs for Section \ref{DP_section35}, \ref{DP_section2} and \ref{DP_section4}.


\subsection{Proofs of Section \ref{DP_section35}}
Let us first cite a Lemma from \cite{suhasini2006} (Lemma A.1 and A.2) which can be easily generalized to convergence in $L^1$:
\begin{lemma}\label{DP_lemma_ergodic} Assume that $(Y_t)$ is a stationary and ergodic process with $\IE|Y_1| < \infty$. Assume that $u\in(0,1)$. Let $b = b_n \to 0$ such that $nb_n \to \infty$. Then the following convergence holds in $L^1$:
\[
	\frac{1}{nb}\sum_{t=1}^{n}K\Big(\frac{t/n-u}{b}\Big)Y_t \to \IE Y_1.
\]	
\end{lemma}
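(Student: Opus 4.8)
The plan is to prove the $L^1$ version directly, essentially turning the commented-out computation that precedes Theorem~\ref{DP_prop_ergodic} into a self-contained argument. Write $T_n := \frac{1}{nb}\sum_{t=1}^{n}K\big(\frac{t/n-u}{b}\big)Y_t = \frac1n\sum_{t=1}^{n}K_b\big(\frac tn-u\big)Y_t$. First I would reduce to the centered case. Since $K$ has bounded variation $B_K$ and compact support, the Riemann sum $\frac1n\sum_{t=1}^{n}K_b\big(\frac tn-u\big)$ differs from $\int_0^1 K_b(x-u)\dif x$ by at most $\frac1n\cdot\mathrm{TV}(K_b(\cdot-u))=\frac{B_K}{nb}\to0$, and for $b$ small enough (using that $u\in(0,1)$ is fixed) $\int_0^1 K_b(x-u)\dif x=\int_{-1/2}^{1/2}K(y)\dif y=1$; hence $\frac1n\sum_{t=1}^{n}K_b\big(\frac tn-u\big)\to1$. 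Splitting $Y_t=(Y_t-\IE Y_1)+\IE Y_1$ decomposes $T_n$ into $\frac1n\sum_t K_b\big(\frac tn-u\big)(Y_t-\IE Y_1)$ plus $\IE Y_1\cdot\frac1n\sum_t K_b\big(\frac tn-u\big)\to\IE Y_1$, so it suffices to treat the mean-zero case and show $\|T_n\|_1\to0$; assume henceforth $\IE Y_1=0$.

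Next I would recenter the kernel mass at the origin using stationarity. Pick $t_0=t_0(n)\in\IZ$ with $|u-t_0/n|<1/n$. By stationarity the shifted sum $\frac1n\sum_{t=1}^{n}K_b\big(\frac tn-u\big)Y_{t-t_0}$ has the same distribution as $T_n$, and after reindexing it equals $\frac{1}{nb}\sum_{s}K\big(\frac{s-m}{nb}\big)Y_s$ with $m:=nu-t_0\in(-1,1)$. Since $b\to0$ forces $nb=o(n)$ while $t_0$ is of order $n$, for $n$ large the index set $\{1-t_0,\dots,n-t_0\}$ contains every $s$ with $K\big(\frac{s-m}{nb}\big)\ne0$ (these satisfy $|s|\le nb/2+1$), so this is a sum over all $s\in\IZ$. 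Replacing $K\big(\frac{s-m}{nb}\big)$ by $K\big(\frac{s}{nb}\big)$ costs, in $L^1$, at most $\frac{\|Y_1\|_1}{nb}\sum_s\big|K\big(\frac{s}{nb}-\frac{m}{nb}\big)-K\big(\frac{s}{nb}\big)\big|$; since $|m/(nb)|<1/(nb)$ equals the grid spacing and $K$ has bounded variation, a Riemann-sum estimate bounds $\sum_s|\cdots|$ by a fixed multiple of $B_K$, so this error is $O\big((nb)^{-1}\big)\to0$. We are reduced to proving $\big\|\frac{1}{nb}\sum_{|t|\le N}K\big(\frac t{nb}\big)Y_t\big\|_1\to0$ with $N=\lfloor nb/2\rfloor\to\infty$; split it as $P_{1,n}:=\frac{1}{nb}\sum_{t=-N}^{0}$ plus $P_{2,n}:=\frac{1}{nb}\sum_{t=1}^{N}$.

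For $P_{2,n}$, set $S_t:=\sum_{s=1}^{t}Y_s$ and sum by parts:
\[
	\sum_{t=1}^{N}K\big(\tfrac t{nb}\big)Y_t=K\big(\tfrac N{nb}\big)S_N-\sum_{t=1}^{N-1}\big[K\big(\tfrac{t+1}{nb}\big)-K\big(\tfrac t{nb}\big)\big]S_t ,
\]
so, because the increments of $K$ along an increasing grid sum to at most $B_K$, $\|P_{2,n}\|_1\le\frac{|K|_{\infty}+B_K}{nb}\sup_{1\le t\le N}\|S_t\|_1$. By the $L^1$ ergodic theorem (Birkhoff together with uniform integrability of the Cesàro averages) $\|S_t\|_1=o(t)$. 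Splitting the supremum at $t=\sqrt N$ — using $\|S_t\|_1\le t\|Y_1\|_1\le\sqrt N\|Y_1\|_1$ for $t\le\sqrt N$ and $\|S_t\|_1=t\cdot\frac{\|S_t\|_1}{t}\le N\sup_{s>\sqrt N}\frac{\|S_s\|_1}{s}$ for $t>\sqrt N$ — gives $\frac{1}{nb}\sup_{1\le t\le N}\|S_t\|_1\le\frac{\sqrt N}{nb}\|Y_1\|_1+\frac{N}{nb}\sup_{s>\sqrt N}\frac{\|S_s\|_1}{s}\to0$ since $N\asymp nb\to\infty$. Hence $\|P_{2,n}\|_1\to0$; applying the identical argument to the time-reversed process (again stationary, ergodic and mean zero) handles $P_{1,n}$. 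Combining with the distributional identity of the recentering step yields $\|T_n\|_1\to0$.

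The only genuinely delicate point is the bookkeeping in the first two steps: controlling Riemann-sum errors for bounded-variation functions on grids of spacing $1/n$ and $1/(nb)$ — in particular the residual-shift estimate $\sum_s\big|K\big(\frac{s}{nb}-h\big)-K\big(\frac{s}{nb}\big)\big|=O(B_K)$ when $h$ has the order of the grid spacing — and verifying that the support/containment conditions hold "for $n$ large" using $b\to0$, $nb\to\infty$ and $u$ fixed in $(0,1)$. The ergodic core in the last step is routine once one invokes $L^1$ convergence of Birkhoff averages and the $\sqrt N$-splitting of the partial-sum supremum.
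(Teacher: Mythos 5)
Your proof is correct and follows essentially the same route the paper intends: reduce to the mean-zero case via the bounded-variation Riemann-sum estimate, recenter the kernel at the origin by stationarity, apply summation by parts with the partial sums $S_t$, and conclude from the $L^1$ ergodic theorem ($\|S_t\|_1=o(t)$) together with the $\sqrt{N}$-splitting of the supremum. This is exactly the argument sketched in the paper's (commented-out) computation and attributed to Lemmas A.1--A.2 of Dahlhaus and Subba Rao (2006); your version just fills in the Riemann-sum and support bookkeeping explicitly.
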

\begin{proof}[\normalfont \textbf{Proof of Theorem \ref{DP_prop_ergodic}}]
	(i) Without loss of generality, let us assume $\alpha \le 1$. For $J\in\IN$ and $j = 1,...,2^{J}$ define intervals of indices $I_{j,J,n} := \{t: t/n\in (\frac{j-1}{2^J},\frac{j}{2^J}]\}$ such that $\bigcup_{j=1}^{2^J}I_{j,J,n} = \{1,...,n\}$. For fixed $J \in \IN$, we have
	\begin{eqnarray*}
		&& \Big\|\frac{1}{n}\sum_{t=1}^{n}X_{t,n} - \frac{1}{2^J}\sum_{j=1}^{2^J}\frac{1}{|I_{j,J,n}|}\sum_{t \in I_{j,J,n}}X_{t,n}\Big\|_1 \le \Big\|\sum_{j=1}^{2^J}\Big(\frac{|I_{j,J,n}|}{n} - \frac{1}{2^J}\Big)\cdot \frac{1}{|I_{j,J,n}|}\sum_{t \in I_{j,J,n}}X_{t,n}\Big\|_1\\
		&\le& \sum_{j=1}^{2^J}\Big|\frac{|I_{j,J,n}|}{n} - \frac{1}{2^J}\Big| \cdot \sup_{t=1,...,n}\|X_{t,n}\|_1 \le  \frac{2^J}{n}\cdot \sup_{t=1,...,n}\|X_{t,n}\|_1
	\end{eqnarray*}
	and
	\begin{eqnarray*}
		&& \Big\|\frac{1}{2^J}\sum_{j=1}^{2^J}\frac{1}{|I_{j,J,n}|}\sum_{t \in I_{j,J,n}}X_{t,n} - \frac{1}{2^J}\sum_{j=1}^{2^J}\frac{1}{|I_{j,J,n}|}\sum_{t \in I_{j,J,n}}\tilde X_{t}\Big(\frac{j}{2^J}\Big)\Big\|_1\\
		&\le& \sup_{t=1,...,n}\|X_{t,n} - \tilde X_t(t/n)\|_{1} + \sup_{|u-v|\le 2^{-J}}\big\| \tilde X_t(u) - \tilde X_t(v)\|_1
	\end{eqnarray*}
	Note that for fixed $J$, by the ergodic theorem for stationary sequences we have for $n\to\infty$:
	\[
		E(J,n) := \frac{1}{2^J}\sum_{j=1}^{2^J}\frac{1}{|I_{j,J,n}|}\sum_{t\in I_{j,J,n}}\tilde X_{t}\Big(\frac{j}{2^J}\Big) \to \frac{1}{2^J}\sum_{j=1}^{2^J}\IE \tilde X_0\Big(\frac{j}{2^J}\Big) =: E(J) \quad \mbox{a.s. and in }L^1,
	\]
	By the continuity of $[0,1]\to \IR, u \mapsto \IE \tilde W_0(u)$, we have
	\[
		E(J) = \frac{1}{2^J}\sum_{j=1}^{2^J}\IE \tilde X_0\Big(\frac{j}{2^J}\Big) \to \int_{0}^{1}\IE \tilde X_0(u) \dif u =: E \quad (J\to\infty).
	\]
	Finally,
	\begin{eqnarray*}
		&& \Big\| \frac{1}{n}\sum_{t=1}^{n}X_{t,n} - E\Big\|_1\\
		&\le& \frac{2^J}{n}\cdot \sup_{t=1,...,n}\|X_{t,n}\|_1 + \sup_{t=1,...,n}\|X_{t,n} - \tilde X_t(t/n)\|_1 + \sup_{|u-v| \le 2^{-J}}\|\tilde X_t(u) - \tilde X_t(v)\|_1\\
		&&\quad\quad + \|E(J,n) - E(J)\|_1 + |E(J) - E|.
	\end{eqnarray*}
	Thus for all $J\in\IN$:
	\[
		\limsup_{n\to\infty}\Big\| \frac{1}{n}\sum_{t=1}^{n}X_{t,n} - E\Big\|_1 \le \sup_{|u-v| \le 2^{-J}}\|\tilde X_t(u) - \tilde X_t(v)\|_1 + |E(J) - E|.
	\]
	The limit $J\to\infty$ gives the result.
	
	(ii) To prove the local weak law of large numbers, first note that
	\begin{eqnarray*}
		&& \Big\|\frac{1}{nb}\sum_{t=1}^{n}K\Big(\frac{t/n-u}{b}\Big)\cdot \big(X_{t,n} - \tilde X_t(u)\big)\Big\|_1\\
		&\le& |K|_{\infty}\Big(\sup_{t=1,...,n}\|X_{t,n} - \tilde X_t(t/n)\|_1 + \sup_{|u-v|\le b/2}\|\tilde X_t(u) - \tilde X_t(v)\|_1\Big) \to 0.
	\end{eqnarray*}
	This shows that it is enough to consider the convergence of the sum with the corresponding stationary sequence. From Lemma \ref{DP_lemma_ergodic}, we have that $\frac{1}{nb}\sum_{t=1}^{n}K\big(\frac{t/n-u}{b}\big)\cdot \tilde X_t(u) \to \IE \tilde X_t(u)$ holds in $L^1$, which finishes the proof.
	
	(iii) Define $S_{n}(u) := \sum_{t=1}^{n}K\big(\frac{t/n-u}{b}\big)\big(X_{t,n} - \IE X_{t,n}\big)$ and $\tilde S_{n}(u) := \sum_{t=1}^{n}K\big(\frac{t/n-u}{b}\big)\big(\tilde X_{t}(t/n)- \IE \tilde X_{t}(t/n)\big)$ and $\tilde S_{k,n} := \sum_{t=1}^{k}\tilde X_{t}(t/n)$. By partial summation, we have
	\[
		\tilde S_{n}(u) = \sum_{t=1}^{n-1}\Big[K\Big(\frac{t/n-u}{b}\Big) - K\Big(\frac{(t+1)/n-u}{b}\Big)\Big]\cdot \tilde S_{t,n} + K\Big(\frac{1-u}{b}\Big) \tilde S_{n,n}.
	\]
	Since $K$ is of bounded variation $B_K$, we have $\sum_{t=1}^{n-1}\big|K\big(\frac{t/n-u}{b}\big) - K\big(\frac{(t+1)/n-u}{b}\big)\big| \le B_K$ and thus
	\begin{equation}
		|\tilde S_{n}(u)| \le B_K \cdot \sup_{t=1,...,n}|\tilde S_{t,n}|.\label{DP_weaklaw_basic_inequality}
	\end{equation}
	The same calculation yields
	\[
		|S_n(u) - \tilde S_n(u)| \le B_K \cdot \sum_{k=1}^{n}\big(|X_{t,n} - \tilde X_t(t/n)| + |\IE X_{t,n} - \IE \tilde X_t(t/n)|\big).
	\]
	First assume $1 < q \le 2$. By using the decomposition $\tilde X_{t}(t/n)- \IE \tilde X_{t}(t/n) = \sum_{l=0}^{\infty}P_{t-l}\tilde X_{t}(t/n)$ and applying Doob's $L^q$ maximal inequality (cf. Theorem 5.4.3 in \cite{durret}), Burkholder's inequality (cf. \cite{burkholder1988}) and the elementary inequality $(|a_1| + |a_2|)^{q/2} \le |a_1|^{q/2} + |a_2|^{q/2}$, we obtain
	\begin{eqnarray*}
		\big\|\sup_{t=1,...,n}|\tilde S_{t,n}|\big\|_q &\le& \sum_{l=0}^{\infty}\big\|\sup_{t=1,...,n}\big|\sum_{s=1}^{t}P_{s-l}\tilde X_{s}(s/n)\big|\big\|_q\\
		&\le& \sum_{l=0}^{\infty}\frac{q}{q-1}\big\|\sum_{s=1}^{n}P_{s-l}\tilde X_{s}(s/n)\big\|_q \le \sum_{l=0}^{\infty}\frac{q}{(q-1)^2}\Big(\IE\Big(\sum_{s=1}^{n}(P_{s-l}\tilde X_{s}(s/n))^2\Big)^{q/2}\Big)^{1/q}\\
		&\le& \frac{q}{(q-1)^2}\sum_{l=0}^{\infty}\Big(\sum_{s=1}^{n}\|P_{s-l}\tilde X_{s}(s/n)\|_q^q\Big)^{1/q}\\
		&\le& \frac{q}{(q-1)^2}\cdot n^{1/q}\cdot \sum_{l=0}^{\infty}\sup_{u \in [0,1]}\delta_q^{\tilde X(u)}(l).
	\end{eqnarray*}
	which shows that
	\begin{eqnarray*}
		\big\|\sup_{u\in [0,1]}|(nb)^{-1} S_{n}(u)|\big\|_q &\le& \big\|\sup_{u\in [0,1]}|(nb)^{-1} \tilde S_{n}(u)|\big\|_q + \big\|\sup_{u\in [0,1]}|(nb)^{-1} (S_n(u) - \tilde S_{n}(u))|\big\|_q\\
		&\le& \frac{B_K q}{(q-1)^2}\Delta_{0,q}^{\tilde X}\cdot n^{1/q-1}b^{-1} + 2B_K  C_B \cdot n^{-\alpha} b^{-1}.
	\end{eqnarray*}
	If $q > 2$, we use a Nagaev-type inequality from \cite{wu2013}, Theorem 2(ii) which also holds in our situation as the authors point out in their Section 4. Applying this theorem  to $\tilde S_{t,n}$ and $-\tilde S_{t,n}$, we have for all $x > 0$:
	\[
		\IP\big(\sup_{t=1,...,n}|\tilde S_{t,n}| > x/2\big) \le \frac{2C_1 (\Delta_{0,q}^{\tilde X})^q n}{(x/2)^q} + 8G_{1-2/q}\Big(\frac{C_2 x}{2\sqrt{n}\Delta_{0,q}^{\tilde X}}\Big)
	\]
	with positive constants $C_1,C_2$ not depending on $n$. Using \reff{DP_weaklaw_basic_inequality}, we obtain
	\begin{eqnarray*}
		\IP\Big(\sup_{u\in[0,1]}|(nb)^{-1}S_n(u)| > x\Big) &\le& \IP\Big(\sup_{u\in[0,1]}|(nb)^{-1}\tilde S_n(u)| > x/2\Big)\\
		&&\quad\quad\quad\quad\quad + \IP\Big(\sup_{u\in[0,1]}|(nb)^{-1}(S_n(u) - \tilde S_n(u))| > x/2\Big)\\
		&\le& \IP\Big(\sup_{t=1,...,n}|\tilde S_{t,n}| > \frac{nbx}{2B_K}\Big) + \IP\Big(\sup_{u\in[0,1]}|S_n(u) - \tilde S_n(u)| > nbx/2\Big)\\
		&\le&  \frac{2C_1 (B_K \Delta_{0,q}^{\tilde X})^q n (nb)^{-q}}{(x/2)^q} + 8G_{1-2/q}\Big(\frac{C_2 nb x}{2\sqrt{n}B_K \Delta_{0,q}^{\tilde X}}\Big)\\
		&&\quad\quad\quad\quad\quad + \frac{(2 B_K C_B)^q}{(x/2)^q}\cdot (n^{-\alpha}b^{-1})^q
	\end{eqnarray*}
	In case that knowledge of the dependence measure $\delta_q^{X_{\cdot,n}}$ of the locally stationary process is available, the approximation of $X_{t,n}$ by $\tilde X_t(t/n)$ is not necessary and therefore the discussion of the terms $|S_n(u) - \tilde S_n(u)|$ can be omitted.
\end{proof}

\begin{proof}[\normalfont \textbf{Proof of Proposition \ref{DP_prop_clt}}] For the proof, we use Theorem 5.46 in \cite{witting1995}. Put $S_{k,n} = \sum_{t=1}^{k} X_{t,n}$ and $\tilde S_{k,n} := \sum_{t=1}^{k}\tilde X_t(t/n)$. Note that by $\alpha > \frac{1}{2}$,
	\[
		\big\|\sup_{u\in[0,1]}\big|S_{\lfloor n u \rfloor,n}/\sqrt{n} - \tilde S_{\lfloor n u \rfloor,n}/\sqrt{n}\big| \big\|_2 \le n^{-1/2}\sum_{t=1}^{n}\big\| X_{t,n} - \tilde X_t(t/n)\big\|_2 \le C_B n^{1/2}\cdot n^{-\alpha} \to 0
	\]
	Put $\tilde S_{k,n,L} := \sum_{l=0}^{L-1}\sum_{t=1}^{k}P_{t-l}\tilde X_t\big(\frac{t}{n}\big)$. Use the abbreviation $\lima$ for $\limsup_{L\to\infty}\limsup_{n\to\infty}$. Because $P_{t-l} \tilde X_{t}(t/n) - \IE \tilde X_{t}(t/n) \to 0$ a.s. and in $L^1$ for $l\to\infty$, we have by Doob's maximal inequality:
	\begin{eqnarray*}
		&& \lima\Big\|\sup_{u\in[0,1]}|\tilde S_{\lfloor nu\rfloor,n}/\sqrt{n} - \tilde S_{\lfloor nu\rfloor,n,L}/\sqrt{n}|\Big\|_2\\
		&\le& \lima \sum_{l=L}^{\infty}\frac{1}{\sqrt{n}}\Big\| \sup_{T=1,...,n}\Big|\sum_{t=1}^{T}P_{t-l}\tilde X_{t}(t/n)\Big|\Big\|_2 \le  \lima \sum_{l=L}^{\infty}\frac{2}{\sqrt{n}}\Big\| \sum_{t=1}^{n}P_{t-l}\tilde X_{t}(t/n)\Big\|_2\\
		&\le& \lima \sum_{l=L}^{\infty}\frac{2}{\sqrt{n}}\Big(\sum_{t=1}^{n}\|P_{t-l}\tilde X_{t}(t/n)\|_2^2\Big)^{1/2} \le \lima \ 2\sum_{l=L}^{\infty}\delta^{\tilde X}_2(l) = 0.
	\end{eqnarray*}
	This shows that $S_{\lfloor n u \rfloor,n}/\sqrt{n}$ can be approximated by $\tilde S_{\lfloor n u \rfloor,n,L}/\sqrt{n}$. In the case that the dependence measure $\delta_2^{X_{\cdot,n}}(l)$ of $X_{t,n}$ is defined and summable in $l$, we can omit the condition $\alpha > \frac{1}{2}$ by using the following argument. Define $S_{k,n,L} := \sum_{l=0}^{L-1}\sum_{t=1}^{k}P_{t-l}X_{t,n}$. Similarly as above, it can be shown that
	\[
		\lima\Big\|\sup_{u\in[0,1]}| S_{\lfloor nu\rfloor,n}/\sqrt{n} -  S_{\lfloor nu\rfloor,n,L}/\sqrt{n}|\Big\|_2 = 0.
	\]
	Furthermore, it holds that
	\[
		\|P_{t-l}(X_{t,n} - \tilde X_t(t/n))\|_2 \le \min\Big\{\delta^{\tilde X(t/n)}_2(l) + \delta^{X_{\cdot,n}}_2(l),\sup_{t=1,...,n}\|X_{t,n} - \tilde X_t(t/n)\|_2\Big\} =: \min\{\delta_n(l), c_n\}.
	\]
	By similar arguments as in the calculation above, we obtain
	\begin{eqnarray*}
		&& \lima \Big\|\sup_{u\in[0,1]}|S_{\lfloor nu\rfloor,n,L}/\sqrt{n} - \tilde S_{\lfloor nu\rfloor,n,L}/\sqrt{n}|\Big\|_2\\
		&\le& \lima\  2\sum_{l=0}^{L-1}\min\{\delta_n(l), c_n\} \le \lima \Big( \sum_{0 \le l \le c_n^{-1/2}}c_n + \sum_{l > c_n^{-1/2}}\delta_n(l)\Big)\\
		&\le& \lima\Big(c_n^{1/2} + \sum_{l > c_n^{-1/2}}\sup_{n\in\IN}\delta_n(l)\Big) = 0,
	\end{eqnarray*}
	which in turn also shows that $S_{\lfloor n u \rfloor,n}/\sqrt{n}$ can be approximated by $\tilde S_{\lfloor n u \rfloor,n,L}/\sqrt{n}$.
	
	Now fix $L \in \IN$. Define the index-shifted variant of $\tilde S_{k,n,L}$ by $\hat S_{k,n,L} := \sum_{t=1}^{k} \Big(\sum_{l=0}^{L-1}P_{t}\tilde X_{t+l}\big(\frac{t+l}{n}\big)\Big)$, where $\tilde X_t(u) := \tilde X_t(1)$ for $u > 1$. For $T = 1,...,n$, we have
	\begin{eqnarray*}
		&& |\tilde S_{T,n,L} - \hat S_{T,n,L}| \le \sum_{l=0}^{L-1}\sum_{t=1}^{l}\big|P_{t-l}\tilde X_t\big(\frac{t}{n}\big)\big| + \sum_{l=0}^{L-1}\sum_{t=T-l+1}^{T}\big|P_{t}\tilde X_{t+l}\big(\frac{t+l}{n}\big)\big|.\\
	\end{eqnarray*}
	Finally, define the martingale differences $M_{t,l} := P_{t}\tilde X_{t+l}\big(\frac{t+l}{n}\big)$ and the stationary martingale differences $M_l(u) := P_{0}\tilde X_l\big(u)$. Note that $M_{t,l}$ has the same distribution as $M_l(\frac{t+l}{n})$. We have
	\begin{eqnarray}
		\IP\Big(\frac{1}{\sqrt{n}}\sup_{T=1,...,n}|M_{t,l}| \ge \varepsilon\Big) &\le& n \cdot \sup_{t=1,...,n}\IP(|M_{t,l}| \ge \varepsilon \sqrt{n}) \le \sup_{t=1,...,n}\IE[|M_{t,l}|^2 \Ii_{\{|M_{t,l}| \ge \varepsilon \sqrt{n}\}}]\nonumber\\
		&=& \sup_{u\in[0,1]}\IE[M_l(u)^2 \Ii_{\{|M_l(u)| \ge \varepsilon \sqrt{n}\}}]\nonumber\\
		&\le& \IE\Big[\big(\sup_u |M_0(u)|\big)^2 \cdot \Ii_{\{\sup_u |M_0(u)| \ge \varepsilon \sqrt{n}\}}\Big] \to 0,\label{DQ_proof_clt_eq5}
	\end{eqnarray}
	which shows $\frac{1}{\sqrt{n}}\sup_{u\in[0,1]}|\tilde S_{\lfloor nu\rfloor,n,L} - \hat S_{\lfloor nu\rfloor,n,L}| \pto 0$.\\
	We now investigate the weak convergence of $\hat S_{\lfloor nu\rfloor,L}/\sqrt{n}$ with a martingale central limit theorem from \cite{billingsley}, Theorem 18.2. Note that $\sum_{l=0}^{L-1}M_{t,l}/\sqrt{n}$ is a martingale difference sequence with respect to $\sF_{t}$. By elementary inequalities it can be seen that for each $T = 1,...,n$ and each $\varepsilon > 0$,
	\[
		\sum_{t=1}^{T}\IE\Big[\Big(\sum_{l=0}^{L-1}M_{t,l} / \sqrt{n}\Big)^2\Ii_{\{|\sum_{l=0}^{L-1}M_{t,l}| \ge \varepsilon \sqrt{n}\}}\Big]
	\]
	is bounded by finitely many (dependent on $L$) terms of the form
	\[
		\frac{1}{n}\sum_{t=1}^{T}\IE[M_{t,l}^2 \Ii_{\{|M_{t,l'}| \ge \varepsilon\sqrt{n}\}}],
	\]
	where $l,l'\in \{0,...,L-1\}$. By using similar techniques as in \reff{DQ_proof_clt_eq5}, it can be shown that these converge to 0.
	
	It remains to investigate the behavior of
	\[
		\sum_{t=1}^{T}\IE\Big[\Big(\sum_{l=0}^{L-1}M_{t,l}/\sqrt{n}\Big)^2\Big|\sF_{t-1}\Big] = \sum_{l,l'=0}^{L-1}\frac{1}{n}\sum_{t=1}^{T}\IE[M_{t,l}M_{t,l'}|\sF_{t-1}]
	\]
	for $T = \lfloor sn\rfloor$, $s \in (0,1]$ and $l,l' \in \{0,...,L-1\}$. Define $I_{k,K,T} := \{t:\frac{t}{T}\in (\frac{k-1}{2^K},\frac{k}{2^K}]\}$, then  we have for $K\in\IN$:
	\begin{eqnarray*}
		&& \Big\|\frac{1}{T}\sum_{t=1}^{T}\IE[M_{t,l}M_{t,l'}|\sF_{t-1}] - \frac{1}{2^K}\sum_{k=1}^{2^K}\frac{1}{|I_{k,K,T}|}\sum_{t \in I_{k,K,T}}\IE[M_{t,l}M_{t,l'}|\sF_{t-1}]\Big\|_1\\
		&\le& \frac{2^K}{T}\cdot \sup_{t=1,...,n}\sup_{l=0,...,L-1}\|M_{t,l} M_{t,l'}\|_1,
	\end{eqnarray*}
	which is bounded by $\frac{2^K}{T}\sup_{u}\|\tilde X_0(u)\|_2^2$. Furthermore, since $\frac{t}{T} \in I_{k,K,T}$ implies $|\frac{t+l}{n} - \frac{k}{2^K}s| \le 2^{-K} + \frac{L}{n}$, we obtain
	\begin{eqnarray*}
		&& \Big\| \frac{1}{2^K}\sum_{k=1}^{2^K}\frac{1}{|I_{k,K,T}|}\sum_{t \in I_{k,K,n}}\Big(\IE[M_{t,l}M_{t,l'}|\sF_{t-1}] - \IE[M_{t,l}(\frac{k}{2^K}s)M_{t,l'}(\frac{k}{2^K}s)|\sF_{t-1}]\Big)\Big\|_1\\
		&\le& 2\Big(\sup_{|u-v| \le 2^{-K}}\|\tilde X_0(u) - \tilde X_0(v)\|_2 + \sup_{|u-v| \le L n^{-1}}\|\tilde X_0(u) - \tilde X_0(v)\|_2\Big)\cdot \sup_u \| \tilde X_0(u)\|_2,
	\end{eqnarray*}
	where $M_{t,l}(u) := P_{t}\tilde X_{t+l}(u)$. Since $\IE[M_{t,l}(u)M_{t,l'}(u)|\sF_{t-1}]$ is ergodic, we have
	\[
		\frac{1}{|I_{k,K,T|}}\sum_{t \in I_{k,K,T}}\IE[M_{t,l}(\frac{k}{2^K}s)M_{t,l'}(\frac{k}{2^K}s)|\sF_{t-1}] \pto \IE[M_{0,l}(\frac{k}{2^K}s)M_{0,l'}(\frac{k}{2^K}s)].
	\]
	In total, performing first $n\to\infty$ and afterwards $K\to\infty$, we obtain
	\[
		\sum_{l,l'=0}^{L-1}\frac{1}{n}\sum_{t=1}^{\lfloor ns\rfloor}\IE[M_{t,l}M_{t,l'} |\sF_{t-1}] \to \sum_{l,l'=0}^{L-1}s\cdot \int_{0}^{1}\IE[M_{0,l}(xs)M_{0,l'}(xs)]\dif x = \int_{0}^{s}\Big\|\sum_{l=0}^{L-1}P_{0}\tilde X_l(y)\Big\|_2^2 \dif y.
	\]
	So we have seen that $\{S_{\lfloor nu\rfloor}/\sqrt{n},0 \le u \le 1\} \dto \{\int_{0}^{u}\Big\|\sum_{l=0}^{L-1}P_{0}\tilde X_l(v)\Big\|_2 \dif B(v), 0 \le u \le 1\}$. By the dominated convergence theorem, $\int_{0}^{u}\Big\|\sum_{l=0}^{L-1}P_{0}\tilde X_l(v)\Big\|_2^2 \dif v \to \int_{0}^{u}\sigma^2(v) \dif v$ for $L\to\infty$, which completes the proof.
\end{proof}

\begin{proof}[\normalfont \textbf{Proof of Theorem \ref{DP_clt_local}}]
	Define $W_{n,b} := \frac{1}{\sqrt{nb}}\sum_{t=1}^{n}K\Big(\frac{t/n-u}{b}\Big)\cdot (X_{t,n} - \IE X_{t,n})$. Note that
	\begin{eqnarray*}
		&& \Big\|W_{n,b} - \frac{1}{\sqrt{nb}}\sum_{t=1}^{n}K\Big(\frac{t/n-u}{b}\Big)\cdot \big(\tilde X_t(t/n) - \IE \tilde X_t(t/n)\big)\Big\|_1\\
		&\le&  2|K|_{\infty}\sqrt{nb}\sup_{t=1,...,n}\|X_{t,n} - \tilde X_t(t/n)\|_1.
	\end{eqnarray*}
	Since $\|X_{t,n} - \tilde X_t(t/n)\|_1\le C_B n^{-\alpha}$ by assumption, the term above is of order $\sqrt{nb}n^{-\alpha}$. Since $\sum_{k=0}^{\infty}\sup_u\delta^{\tilde X(u)}_2(k) < \infty$, $|K|_{\infty} < \infty$ and $( K_b(t/n-u)P_{t-l}\tilde X_t(t/n))_t$ is a martingale difference sequence with respect to $(\sF_{t-l})$, we can use the same technique as in the proof of Theorem \ref{DP_prop_clt} to show that
	\[
		 \limsup_{L\to\infty}\limsup_{n\to\infty}\Big\|\frac{1}{\sqrt{nb}}\sum_{t=1}^{n}K\Big(\frac{t/n-u}{b}\Big)\cdot \Big[(\tilde X_t(t/n)-\IE \tilde X_t(t/n)) - \sum_{l=0}^{L-1}P_{t-l}\tilde X_{t}(t/n)\Big]\Big\|_2 = 0.
	\]
	Now fix $L \in\IN$. Since $K$ is Lipschitz continuous and $\sup_t\|\tilde X_t((t+l)/n) - \tilde X_t(t/n)\|_1 \le C_B l^{\alpha} n^{-\alpha}$, it is enough to consider the weak convergence of $\sum_{t=1}^{n}W_t(t/n)$, where we define $W_{t}(v) := \sum_{l=0}^{L-1}K\Big(\frac{t/n-u}{b}\Big)P_{t} \tilde X_{t+l}(v)/\sqrt{nb}$. Note that $W_t(t/n)$ is a martingale difference sequence w.r.t. $\sF_t$.
	It holds that
	\begin{eqnarray*}
		&& \sum_{t=1}^{n}\|W_t^2(t/n) - W_t^2(u)\|_1\\
		&\le& \sum_{l,l'=0}^{L-1}\frac{1}{nb}\sum_{t=1}^{n}K\Big(\frac{t/n-u}{b}\Big)^2 \|P_{t}\tilde X_{t+l}(t/n)P_{t}\tilde X_{t+l'}(t/n) - P_t \tilde X_{t+l}(u)P_t \tilde X_{t+l'}(u)\|_1\\
		&\le& 2\sum_{l,l'=0}^{L-1}\frac{1}{nb}\sum_{t=1}^{n}K\Big(\frac{t/n-u}{b}\Big)^2 \|\tilde X_0(t/n) - \tilde X_0(u)\|_2\cdot \sup_u \|\tilde X_0(u)\|_2\\
		&\le& 2L^2 |K|_{\infty}^2 C_B \sup_{u \in [0,1]}\|\tilde X_0(u)\|_2 \cdot b^{\alpha} = o(1).
	\end{eqnarray*}
	By Lemma \ref{DP_lemma_ergodic},
	\begin{eqnarray*}
		\sum_{t=1}^{n}\IE[W_{t}^2(u)|\sF_{t-1}] &=& \sum_{l,l'=0}^{L-1}\frac{1}{nb}\sum_{t=1}^{n}K\Big(\frac{t/n-u}{b}\Big)^2\IE[P_{t}\tilde X_{t+l}(u)P_{t} \tilde X_{t+l'}(u)|\sF_{t-1}]\\
		&\pto& \int K^2(x) \dif x \cdot \Big\|\sum_{l=0}^{L-1}P_{0}\tilde X_{l}(u)\Big\|_2^2.
	\end{eqnarray*}
	Fix $\varepsilon > 0$. The sum $\sum_{t=1}^{n}\IE[W_{t}^2(t/n)\Ii_{\{|W_{t}(t/n)| \ge \varepsilon\}}]$ is bounded by finitely many (dependent on $L$) terms of the form
	\begin{eqnarray*}
		&& \frac{1}{nb}\sum_{t=1}^{n}K\Big(\frac{t/n-u}{b}\Big)^2 \IE[(P_t \tilde X_{t+l}(t/n))^2\Ii_{\{|K|_{\infty}|P_t \tilde X_{t+l'}(t/n)| \ge \varepsilon \sqrt{nb}\}}]\\
		&\le& |K|^{2}_{\infty} \sup_{u\in[0,1]}\IE[(P_0 \tilde X_l(u))^2 \Ii_{\{|P_0 \tilde X_{l'}(u)| \ge \varepsilon\sqrt{nb}/|K|_{\infty}\}}]\\
		&\le& |K|^{2}_{\infty}\IE[(\sup_u |P_0 \tilde X_l(u)|)^2 \Ii_{\{\sup_u |P_0 \tilde X_{l'}(u)| \ge \varepsilon\sqrt{nb}/|K|_{\infty}\}}]
	\end{eqnarray*}
	which converges to 0 since $\|\sup_u |P_0 \tilde X_l(u)|\|_2 < \infty$ by assumption. So we can apply Theorem 18.1. from \cite{billingsley} to obtain
	\[
		\sum_{t=1}^{n}W_{t}(t/n) \dto N\Big(0, \int K^2(x) \dif x \cdot \Big\|\sum_{l=0}^{L-1}P_{0}\tilde X_{l}(u)\Big\|_2^2\Big)
	\]
	and thus by Theorem 5.46 in \cite{witting1995},
	\[
		W_{n,b} \dto N\big(0,\int K^2(x) \dif x \cdot \big\|\sum_{l=0}^{\infty}P_{0}\tilde X_{l}(u)\big\|_2^2\big).
	\]
\end{proof}

\subsection{Proofs of Section \ref{DP_section2}}
Here, we prove the results from Section \ref{DP_section2}.  The following lemma from \cite{duflo1997}, Lemma 6.2.10 therein  will be used frequently to verify the geometric decay of the difference of recursively defined processes:

\begin{lemma}\label{DP_standard_rec_argument} Assume that $p > 0$ is a positive natural number, $\chi \in \IR^p_{\ge 0}$ with $|\chi|_1 < 1$ and that there are sequences of real-valued nonnegative numbers $(z_s)_{s > -p}$, $(\mu_s)_{s > 0}$ which fulfill for all $s=1,2,...$:
\begin{equation}
	z_s \le \sum_{i=1}^{p}\chi_i z_{s-i} + \mu_s. \label{DP_standard_rec_argument_eq1}
\end{equation}
Then there exist constants $\lambda_0\in(0,1)$, $C_{\lambda} > 0$ only depending on $\chi,p$ such that for all $s = 1,2,...$:
\[
	z_s \le C_{\lambda}\Big( \lambda_0^{s} \cdot |(z_{0},...,z_{-p+1})|_1 + \sum_{i=0}^{s-1}\lambda_0^{i}\mu_{s-i}\Big).
\]
\end{lemma}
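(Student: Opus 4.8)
The plan is to treat \reff{DP_standard_rec_argument_eq1} as a discrete Gr\"onwall inequality with $p$ lagged terms and verify the asserted bound directly by strong induction on $s$, after making the right choice of $\lambda_0$. The whole content is choosing $\lambda_0$ so that the "delayed contraction rate" is controlled and then doing the bookkeeping.

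First I would fix the constant $\lambda_0$. Consider $\lambda \mapsto \kappa(\lambda) := \sum_{i=1}^{p}\chi_i\lambda^{-i}$ on $(0,\infty)$; it is continuous and nonincreasing, and $\kappa(1) = |\chi|_1 < 1$ by hypothesis, so there is $\lambda_0 \in (0,1)$, depending only on $\chi$ and $p$, with $\kappa := \kappa(\lambda_0) \le 1$. Set $C_\lambda := 1$, $A_0 := |(z_0,\dots,z_{-p+1})|_1 = \sum_{j=0}^{p-1} z_{-j}$ (all $z_{-j}\ge 0$), and define the candidate bound $B_s := C_\lambda\big(\lambda_0^{s}A_0 + \sum_{i=0}^{s-1}\lambda_0^{i}\mu_{s-i}\big)$ for $s \ge 1$, and $B_s := C_\lambda\lambda_0^{s}A_0$ for $-p < s \le 0$ (empty $\mu$-sum). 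I claim $z_s \le B_s$ for all $s > -p$, which in particular gives the statement.

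The base cases $-p < s \le 0$ are immediate: $z_s$ is one of the summands of $A_0$, so $z_s \le A_0 \le \lambda_0^{s}A_0 = B_s$ since $\lambda_0\in(0,1)$ and $s\le 0$. For the step at $s\ge 1$, insert \reff{DP_standard_rec_argument_eq1} and the inductive hypothesis: $z_s \le \sum_{i=1}^{p}\chi_i B_{s-i} + \mu_s$. The $A_0$-part of $\sum_{i=1}^{p}\chi_i B_{s-i}$ is $A_0\lambda_0^{s}\sum_{i=1}^{p}\chi_i\lambda_0^{-i} = \kappa A_0\lambda_0^{s}$; the $\mu$-part, after the substitution $m=i+j$ (so $\sum_{i=1}^{p}\chi_i\sum_{j\ge 0}\lambda_0^{j}\mu_{s-i-j}$ turns into $\sum_{m\ge 1}\lambda_0^{m}\mu_{s-m}\sum_{i=1}^{\min(p,m)}\chi_i\lambda_0^{-i}$), is at most $\kappa\sum_{m=1}^{s-1}\lambda_0^{m}\mu_{s-m}$ since $\sum_{i=1}^{\min(p,m)}\chi_i\lambda_0^{-i}\le\kappa$. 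Hence $z_s \le \kappa A_0\lambda_0^{s} + \kappa\sum_{m=1}^{s-1}\lambda_0^{m}\mu_{s-m} + \mu_s \le A_0\lambda_0^{s} + \mu_s + \sum_{m=1}^{s-1}\lambda_0^{m}\mu_{s-m} = B_s$, using $\kappa\le 1$ and $C_\lambda=1$. This closes the induction.

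An equivalent structural route is to write \reff{DP_standard_rec_argument_eq1} vectorially as $Z_s \le \mathcal{A} Z_{s-1} + M_s$ componentwise, with $Z_s=(z_s,\dots,z_{s-p+1})'$, $M_s=(\mu_s,0,\dots,0)'$ and $\mathcal{A}$ the nonnegative companion matrix of $\chi$; iterating (legitimate since $\mathcal{A}$ has nonnegative entries) gives $Z_s \le \mathcal{A}^{s}Z_0 + \sum_{i=0}^{s-1}\mathcal{A}^{i}M_{s-i}$, and the claim follows from $\|\mathcal{A}^{i}\|_\infty \le C_\lambda\lambda_0^{i}$, valid for any $\lambda_0$ strictly above the spectral radius of $\mathcal{A}$ --- which is $<1$ precisely because the roots of $x^{p}-\chi_1 x^{p-1}-\dots-\chi_p$ lie in the open unit disk when $|\chi|_1<1$. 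I expect no real obstacle: the only points needing care are the choice of $\lambda_0$ making $\kappa(\lambda_0)\le 1$ (equivalently, the spectral radius bound) and the reindexing of the double sum; everything else is routine. Since this is classical (Lemma 6.2.10 in \cite{duflo1997}), in the paper it would simply be cited.
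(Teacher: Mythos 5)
Your proof is correct. The paper does not prove this lemma at all --- it is quoted from Duflo (1997), Lemma 6.2.10 --- so any complete argument is a bonus; yours is the standard one and it checks out. The key choice of $\lambda_0\in(0,1)$ with $\kappa(\lambda_0)=\sum_{i=1}^p\chi_i\lambda_0^{-i}\le 1$ (possible by continuity since $\kappa(1)=|\chi|_1<1$ and $\chi_i\ge 0$) is exactly the right move, the base cases $-p<s\le 0$ use $\lambda_0^{s}\ge 1$ correctly, and the reindexing $m=i+j$ in the $\mu$-part together with $\sum_{i=1}^{\min(p,m)}\chi_i\lambda_0^{-i}\le\kappa$ closes the strong induction with $C_\lambda=1$. (The classical route in Duflo, and in an earlier draft of this paper, instead iterates the inequality and tracks which chains of lags $i_1+\dots+i_k$ have exited the initial segment; your induction against an explicit candidate bound $B_s$ is cleaner and yields the sharper constant $C_\lambda=1$, at the mild cost of having to guess $B_s$ in advance.) The companion-matrix remark is also sound: $|\chi|_1<1$ with nonnegative entries forces all roots of $x^p-\chi_1x^{p-1}-\dots-\chi_p$ into the open unit disk, since $|x|\ge 1$ would give $|x|^p\le|x|^{p-1}|\chi|_1<|x|^{p-1}$.
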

Sometimes we will apply the lemma for $s = 0,1,2,...$ instead of $s = 1,2,3,...$ .

For the following proofs, recall the abbreviations $Y_{t-1,n} = (X_{t-1,n},...,X_{t-p,n})$ and $\tilde Y_{t-1}(u) = (\tilde X_{t-1}(u),...,\tilde X_{t-p}(u))$. For $y \in \IR^p$, we will use the abbreviation $G_{\varepsilon,u}(y) := G_{\varepsilon}(y,u)$. Define the random map $R_{\varepsilon,u}(y) := (G_{\varepsilon,u}(y),y_1,...,y_{p-1})$. Let $X_{n,u}(y)$ be the first element of the vector $H_{n,u}(y) := R_{\varepsilon_0,u} \circ R_{\varepsilon_{-1},u} \circ ... \circ R_{\varepsilon_{-n},u}(y)$, where $n= 0,1,2,...$ . For consistency of the following argumentations, define $X_{n,u}(y) := y_{-n}$ for $n = -1,...,-p$. Note that $H_{n,u}(y)_j = X_{n-j+1,u}(y)$ (in distribution) for $j = 1,...,p$.
Let $J_{n,u}(y)$ be defined similarly to $H_{n,u}(y)$ but based on $\varepsilon_{-1},...,\varepsilon_{-n-1}$ instead of $\varepsilon_0,...,\varepsilon_{-n}$. Note that $X_{n,u}(y) = G_{\varepsilon_0,u}(J_{n-1,u}(y))$ and that $J_{n-1,u}(y) = H_{n-1,u}(y) = (X_{n-1,u}(y),...,X_{n-p,u}(y))'$ holds in distribution.

\begin{proof}[\normalfont \textbf{Proof of Proposition \ref{DP_rek_1_sol}}]
	(i) Note that $(|a| + |b|)^{q'} \le |a|^{q'} + |b|^{q'}$ since $0 < q' \le 1$. By \reff{DP_cond_1}, we obtain
	\begin{eqnarray*}
		&& \|X_{n,u}(y) - X_{n,u}(y')\|_q^{q'}\\
		&\le& \| G_{\varepsilon_0,u}(J_{n-1,u}(y)) - G_{\varepsilon_0,u}(J_{n-1,u}(y'))\|_q^{q'}\\
		&\le& \IE\big[\IE\big[|G_{\varepsilon_0,u}(J_{n-1,u}(y)) - G_{\varepsilon_0,u}(J_{n-1,u}(y'))|^{q} \big|\sF_{-1}\big]\big]^{q'/q}\\
		&\le& \IE\big[|J_{n-1,u}(y) - J_{n-1,u}(y')|_{\chi,q'}^{q}]^{q'/q}\\
		&\le& \IE\Big[\Big(\sum_{j=1}^{p}\chi_j |X_{n-j,u}(y) - X_{n-j,u}(y')|^{q'}\Big)^{q/q'}\Big]^{q'/q}\\
		&=& \big\|\sum_{j=1}^{p}\chi_j |X_{n-j,u}(y) - X_{n-j,u}(y')|^{q'}\big\|_{q/q'}\\
		&\le&  \sum_{j=1}^{p}\chi_j \big\| |X_{n-j,u}(y) - X_{n-j,u}(y')|^{q'}\big\|_{q/q'}
		= \sum_{j=1}^{p}\chi_j \big\|X_{n-j,u}(y) - X_{n-j,u}(y')\|_q^{q'}.
	\end{eqnarray*}
	By Lemma \ref{DP_standard_rec_argument}, we have with some $C_{\lambda} > 0, \lambda_0\in(0,1)$ independent of $u\in[0,1]$ that for all $n\in\IN$:
	\begin{equation}
		\|X_{n,u}(y) - X_{n,u}(y')\|_q^{q'} \le C_{\lambda} \lambda_0^{n+1}\cdot |y-y'|_1^{q'}.\label{DP_rek_1_zwischenresultat}
	\end{equation}
	Applying \reff{DP_rek_1_zwischenresultat} to $y = y_0$ and $y' = R_{\varepsilon_{-n-1},u}(y_0)$, we obtain
	\begin{eqnarray*}
		\Big\|\sum_{n=0}^{\infty}|X_{n,u}(y_0) - X_{n+1,u}(y_0)|\Big\|_q^{q'} &\le& \sum_{n=0}^{\infty}\|X_{n,u}(y_0) - X_{n+1,u}(y_0)\|_q^{q'}\\
		&\le& C_{\lambda}\sum_{n=0}^{\infty}\lambda_0^{n+1} \cdot \| |y_0 - R_{\varepsilon_{-n-1},u}(y_0)|_1\|_q^{q'}< \infty.
	\end{eqnarray*}
	By Markov's inequality and Borel-Cantelli's lemma, this shows that $(X_{n,u}(y_0))_{n\in\IN}$ is a Cauchy sequence a.s. and thus has an almost sure limit $\tilde X_0(u)$ (say). Furthermore, we have
	\[
		\| X_{n,u}(y_0)\|_q^{q'} \le |y_0|_1^{q'} + \sum_{k=0}^{n-1}\|X_{k+1,u}(y_0) - X_{k,u}(y_0)\|_q^{q'} \le  |y_0|_1^{q'} + \frac{C_\lambda \lambda_0}{1-\lambda_0}\| |y_0 - R_{\varepsilon_{-n-1},u}(y_0)|_1\|_q^{q'}.
	\]
	By Fatou's lemma,
	\[
		\sup_{u\in[0,1]}\|\tilde X_0(u)\|_q^{q'} \le \sup_{u\in[0,1]}\liminf_{n\to\infty}\|X_{n,u}(y_0)\|_q^{q'} < \infty,
	\]
	since $\sup_{u\in[0,1]}\| G_{\varepsilon_0}(y_0, u)\|_q < \infty$ by assumption.\\
	Since $\tilde X_0(u)$ is $\sF_0$-measurable, we can write $\tilde X_0(u) = H(u,\sF_0)$ for some measurable function $H$. By \reff{DP_rek_1_zwischenresultat}, $X_{n,u}(y)$ converges almost surely to the same limit $\tilde X_0(u)$ for arbitrary $y\in\IR^p$. This shows a.s. uniqueness among all $\sF_0$-measurable processes and we can express $\tilde X_t(u) = H(u,\sF_t)$ a.s. Put $\tilde X_t^{*0}(u) = H(u,\sF_t^{*0})$ for $t\in\IZ$. Because $\tilde X_t(u)$ obeys \reff{DP_rek_2}, we have for $X_t^{*0}(u) = H(u,\sF_t^{*0})$ by \reff{DP_cond_1}:
	\[
		\| \tilde X_t(u) - \tilde X_t^{*0}(u)\|_q^{q'} \le \sum_{j=1}^{p}\chi_j \|\tilde X_{t-j}(u) - \tilde X_{t-j}^{*0}(u)\|_q^{q'}
	\]
	By Lemma \ref{DP_standard_rec_argument}, we conclude $\big(\delta_q^{\tilde X(u)}(k)\big)^{q'} =  \| \tilde X_t(u) - \tilde X_t^{*0}(u)\|_q^{q'} \le 2pC_{\lambda}\lambda_0^{t} \|\tilde X_0(u)\|_q^{q'}$.

	(ii) Because $X_{0,n} = \tilde X_0(0)$ by means of \reff{DP_rek_1}, the existence and the a.s. uniqueness statement is obvious from Proposition \ref{DP_rek_1_sol}(i). From \reff{DP_cond_1} and the triangle inequality, we obtain
	\begin{eqnarray*}
		\|X_{t,n}\|_q^{q'} &\le& \sum_{j=1}^{p}\chi_j \|X_{t-j,n} - y_{0j}\|_q^{q'} + \big\| G_{\varepsilon_0}\big(y_0,\frac{t}{n}\big)\big\|_q^{q'}\\
		&\le& \sum_{j=1}^{p}\chi_j \| X_{t-j,n}\|_q^{q'} + |y_0|_1^{q'} + \sup_{u\in [0,1]} \|G_{\varepsilon_0}(y_0,u)\|_q^{q'}.
	\end{eqnarray*}
	Since $\|X_{s,n}\|_q^{q'} = \|\tilde X_0(0)\|_q^{q'}$ for $s \le 0$, Lemma \ref{DP_standard_rec_argument} implies
	\[
		\|X_{t,n}\|_q^{q'} \le C_{\lambda}p\lambda_0^{t}\|\tilde X_0(0)\|_q^{q'} + (1-\lambda_0)^{-1}\big(|y_0|_1^{q'} + \sup_{u\in [0,1]} \|G_{\varepsilon_0}(y_0,u)\|_q^{q'}\big)
	\]
	for all $t =1,...,n$, which gives $\sup_{n\in\IN}\sup_{t=1,...,n}\|X_{t,n}\|_q^{q'} < \infty$. Note that for arbitrary $t \ge 0$, $k \ge 0$, we have by \reff{DP_cond_1}:
	\[
		\|X_{t,n} - X_{t,n}^{*(t-k)}\|_q^{q'} \le \sum_{j=1}^{p}\chi_j \|X_{t-j,n} - X_{t-j,n}^{*(t-k)}\|_q^{q'}.
	\]
	Note that $z_s := \|X_{s+(t-k),n} - X_{s+(t-k),n}^{*(t-k)}\|_q^{q'} = 0$ for $s < 0$ and furthermore,  $z_0 \le 2 \sup_{n\in\IN}\sup_{t=1,...,n}\|X_{t,n}\|_q^{q'}$. Lemma \ref{DP_standard_rec_argument} implies
	\[
		\big(\delta_q^{X_{\cdot,n}}(k)\big)^{q'} = \|X_{t,n} - X_{t,n}^{*(t-k)}\|_q^{q'} = z_k \le 2C_{\lambda}\lambda_0^{k} \sup_{n\in\IN}\sup_{t=1,...,n}\|X_{t,n}\|_q^{q'}.
	\]
\end{proof}

\begin{proof}[\normalfont \textbf{Proof of Lemma \ref{DP_lemma_1}}]
	The first inequality \reff{DP_x_stat_hoelder} is a consequence of
	\begin{eqnarray*}
		&& \|\tilde X_t(u) - \tilde X_t(u')\|_q^{q'}\\
		&\le& \|G_{\varepsilon_t}(\tilde Y_{t-1}(u), u) - G_{\varepsilon_t}(\tilde Y_{t-1}(u),u')\|_q^{q'} + \|G_{\varepsilon_t}(\tilde Y_{t-1}(u), u') - G_{\varepsilon_t}(\tilde Y_{t-1}(u'),u')\|_q^{q'}\\
		&\le& \|C(\tilde Y_{t-1}(u))\|_q^{q'}|u-u'|^{\alpha q'} + \sum_{j=1}^{k}\chi_j \|\tilde X_{t-j}(u) - \tilde X_{t-j}(u')\|_q^{q'}\\
		&\le& C^{q'}|u - u'|^{\alpha q'} + |\chi|_1 \cdot  \|\tilde X_t(u) - \tilde X_t(u')\|_q^{q'}.
	\end{eqnarray*}
	For the second inequality, note that we have for all $s = 1,...,n$:
	\begin{eqnarray*}
		&& \left\|X_{s,n} - \tilde X_{s}\left(\frac{s}{n}\right)\right\|_q^{q'} = \left\|G_{\varepsilon_t}\left(Y_{s-1,n}, \frac{s}{n}\right) - G_{\varepsilon_t}\left(\tilde Y_{s-1}\left(\frac{s}{n}\right),\frac{s}{n}\right)\right\|_q^{q'}\\
		&\le& \sum_{i=1}^{p}\chi_i \cdot \left\|X_{s-i,n} - \tilde X_{s-i}\left(\frac{s}{n}\right)\right\|_q^{q'}\\
		&\le& \sum_{i=1}^{p}\chi_i \cdot \left\|X_{s-i,n} - \tilde X_{s-i}\left(\frac{s-i}{n}\vee 0\right)\right\|_q^{q'} + \sum_{i=1}^{p}\chi_i \cdot \left\|\tilde X_{s-i}\left(\frac{s-i}{n} \vee 0\right) - \tilde X_{s-i}\left(\frac{s}{n}\right)\right\|_q^{q'}\\
		&\le&  \sum_{i=1}^{p}\chi_i \cdot \left\|X_{s-i,n} - \tilde X_{s-i}\left(\frac{s-i}{n} \vee 0\right)\right\|_q^{q'} + C^{q'} p^{\alpha q'}\frac{|\chi|_1}{1-|\chi|_1}\cdot n^{-\alpha q'}.
	\end{eqnarray*}
	Define $z_s := \|X_{s,n} - \tilde X_s(\frac{s}{n}\vee 0)\|_q^{q'}$. Note that $z_s = 0$ for $s \le 0$ and define $\mu := C^{q'} p^{\alpha q'}\frac{|\chi|_1}{1-|\chi|_1}\cdot n^{-\alpha q'}$. In this special case we can calculate the constants from Lemma \ref{DP_standard_rec_argument} directly, since $z_{s-i_1-...-i_s} = 0$ for $i_1,...,i_s \in \{1,...,p\}$:
	\[
		z_s \le \sum_{i_1=1}^{p}\chi_{i_1}z_{s-i_1} + \mu \le \sum_{i_1,i_2 = 1}^{p}\chi_{i_1}\chi_{i_2}z_{s-i_1-i_2}  + \mu(1+|\chi|_1) \le ... \le \mu(1 + |\chi|_1 + ... + |\chi|_1^{s-1}).
	\]
	which yields $z_s \le \frac{\mu}{1-|\chi|_1}$ and thus
	\[
		\sup_{s=1,...,n}\left\|X_{s,n} - \tilde X_{s}\left(\frac{s}{n}\right)\right\|_q^{q'} \le C^{q'} p^{\alpha q'}\frac{|\chi|_1}{(1-|\chi|_1)^2} n^{-\alpha q'}.
	\]
\end{proof}

\begin{proof}[\normalfont \textbf{Proof of Theorem \ref{DP_thm_cont}}]
	With out loss of generality, we prove the statement for $t = 0$. Because of the continuity of $G$, the process $(X_{n,u}(y_0))_{u\in[0,1]}$ is continuous and thus a random element of the normed space $(C[0,1],|\cdot|_{\infty})$ where $|\cdot|_{\infty}$ denotes the supremum norm on $[0,1]$.
	With condition \reff{DP_continuous_cond} we obtain for two functions $u \mapsto y(u),\tilde y(u)$:
	\[
		\Big\|\sup_{u\in[0,1]}|X_{n,u}(y) - X_{n,u}(\tilde y)|\Big\|_{q}^{q'} \le \sum_{j=1}^{p}\chi_j\cdot \Big\|\sup_{u \in [0,1]}|X_{n-j,u}(y) - X_{n-j,u}(\tilde y)|\Big\|_{q}^{q'}.
	\]
	Lemma \ref{DP_standard_rec_argument} implies that there exist $C_{\lambda} > 0$, $0 \le \lambda < 1$ such that
	\begin{equation}
		\big\|\sup_{u\in[0,1]}|X_{n,u}(y) - X_{n,u}(\tilde y)|\big\|_{q}^{q'} \le C_{\lambda} \lambda_0^{n+1} \sup_{u\in[0,1]}|y-\tilde y|^{q'}_1.\label{DP_tightnessarg2}
	\end{equation}
	Taking $y(u) = y_0$, $\tilde y(u) = R_{\varepsilon_{-n-1},u}(y_0)$, we conclude
	\begin{equation}
		\Big\|\sup_{u\in[0,1]}|X_{n+1,u}(y_0) - X_{n,u}(y_0)|\Big\|_{q}^{q'} \le C_{\lambda} \lambda_0^{n+1} \big\|\sup_{u\in[0,1]}|y_0 - R_{\varepsilon_0}(y_0,u)|_1\big\|_{q}^{q'}.\label{DP_tightnessarg1}
	\end{equation}
	Markov's inequality and Borel-Cantelli's lemma implies that the sequence $(X_{n,u}(y_0))_{u\in[0,1]}$, $n\in\IN$ of elements of $C[0,1]$ is a Cauchy sequence in $(C[0,1],|\cdot|_{\infty})$ almost surely. Since this space is complete, there exists a continuous limit $\hat X_0 = (\hat X_0(u))_{u\in[0,1]}$. It was already shown in the proof of Proposition \ref{DP_rek_1_sol} that $X_{n,u}(y_0) \to \tilde X_0(u)$ a.s. for fixed $u\in[0,1]$. This implies that $\hat X_0$ is a continuous modification of $(\tilde X_0(u))_{u\in[0,1]}$.
	By \reff{DP_tightnessarg1}, we have
	\begin{eqnarray}
		&& \Big\|\sup_{u\in[0,1]}|X_{n,u}(y_0)|\Big\|_{q}^{q'} \le \sum_{k=0}^{n-1}\Big\|\sup_{u\in[0,1]}|X_{k,u}(y_0) - X_{k+1,u}(y_0)|\Big\|_{q}^{q'} + |y_0|_1^{q'}\nonumber\\
		&\le& \frac{C_{\lambda}\lambda_0}{1-\lambda_0}\big\|\sup_{u\in[0,1]}|y_0 - R_{\varepsilon_0}(y_0,u)|_1\big\|_{q}^{q'} + |y_0|_1^{q'} =: D^{q'}.\label{DP_tightnessarg_eq1}
	\end{eqnarray}
	Because for $M\in\IN$, $M \wedge \sup_{u\in[0,1]}|\cdot|$ is a bounded and continuous functional, we obtain $ \big\|M \wedge \sup_{u\in[0,1]}|\hat X_0(u)|\big\|_{q} \le D$. The monotone convergence theorem implies $ \sup_{u\in[0,1]}|\hat X_t(u)| \in L^{q}$.
\end{proof}

\begin{proposition}\label{DP_prop_continuous}
	In the situation of Theorem \ref{DP_thm_cont}, instead of \reff{DP_continuous_cond} assume that $x \mapsto G_{\varepsilon}(x,u)$ is differentiable for all $\varepsilon,u$ and that for all $u_0 \in [0,1]$,
	\[
		\limsup_{\delta \to 0}\big\|\sup_{|u-u_0| \le \delta}\sup_{x}|\partial_1 G_{\varepsilon_0}(x,u) - \partial_{1}G_{\varepsilon_0}(x,u_0)|_1\big\|_{q} = 0
	\]
	and
	\[
		\sup_{u \in [0,1]}\Big\|\sup_{y\not=y'}\frac{|G_{\varepsilon_0}(y,u) - G_{\varepsilon_0}(y',u)|}{|y-y'|_{\chi,q'}}\Big\|_{q} \le 1.
	\]
	Then the results of Theorem \ref{DP_thm_cont} are still valid.
\end{proposition}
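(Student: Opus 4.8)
The plan is to follow the proof of Theorem~\ref{DP_thm_cont}, but to run the contraction argument \emph{locally} in a small neighbourhood of each point $u_0\in[0,1]$ instead of globally on $[0,1]$, and then to patch the resulting local continuous modifications together by compactness of $[0,1]$. I take over all notation ($X_{n,u}(y)$, $R_{\varepsilon,u}$, $J_{n,u}(y)$, $H_{n,u}$, $q'=\min\{q,1\}$, $|\cdot|_{\chi,q'}$) from the proofs of Proposition~\ref{DP_rek_1_sol} and Theorem~\ref{DP_thm_cont}, and I still use that joint continuity of $(y,u)\mapsto G_\varepsilon(y,u)$ makes each $u\mapsto X_{n,u}(y_0)$ continuous and that $\|\sup_{u\in[0,1]}|G_{\varepsilon_0}(y_0,u)|\|_q<\infty$ — both retained here, since only \reff{DP_continuous_cond} is being replaced.

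First I fix $u_0\in[0,1]$ and $\delta>0$, write $N_\delta:=[u_0-\delta,u_0+\delta]\cap[0,1]$, set
\[
	A_{\varepsilon_0,\delta}:=\sup_{u\in N_\delta}\sup_{x\in\IR^p}\big|\partial_1G_{\varepsilon_0}(x,u)-\partial_1G_{\varepsilon_0}(x,u_0)\big|_1,\qquad \eta(\delta):=\|A_{\varepsilon_0,\delta}\|_q,
\]
and note $\eta(\delta)\to0$ as $\delta\to0$ by the hypothesis on $\partial_1G$. By the mean value theorem applied to $s\mapsto G_{\varepsilon_0}(y'+s(y-y'),u)-G_{\varepsilon_0}(y'+s(y-y'),u_0)$, for every $u\in N_\delta$ and $y\ne y'$,
\[
	|G_{\varepsilon_0}(y,u)-G_{\varepsilon_0}(y',u)|\le L_{\varepsilon_0}\,|y-y'|_{\chi,q'}+A_{\varepsilon_0,\delta}\,|y-y'|_1,
\]
where $L_{\varepsilon_0}:=\sup_{y\ne y'}|G_{\varepsilon_0}(y,u_0)-G_{\varepsilon_0}(y',u_0)|/|y-y'|_{\chi,q'}$ satisfies $\|L_{\varepsilon_0}\|_q\le1$ by the remaining hypothesis, taken at $u=u_0$. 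Unrolling one step of the iteration as in the proofs of Proposition~\ref{DP_rek_1_sol} and Theorem~\ref{DP_thm_cont}, taking $\sup_{u\in N_\delta}$, and using $(\sum_j a_j)^{q'}\le\sum_j a_j^{q'}$ (valid since $q'\le1$) to absorb the $|\cdot|_1$-term into a sum, I obtain for $V_n:=\sup_{u\in N_\delta}|X_{n,u}(y)-X_{n,u}(\tilde y)|$ the pointwise bound $V_n^{q'}\le\sum_{j=1}^p(\chi_j L_{\varepsilon_0}^{q'}+A_{\varepsilon_0,\delta}^{q'})(V^{(1)}_{n-j})^{q'}$, where $V^{(1)}$ is the analogous quantity built from $\varepsilon_{-1},\varepsilon_{-2},\dots$ (hence independent of $\varepsilon_0$ and of the same law as $V$). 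Taking $q$-th moments and exploiting this independence exactly as in Proposition~\ref{DP_rek_1_sol} gives
\[
	\|V_n\|_q^{q'}\le\sum_{j=1}^p\kappa_j\,\|V_{n-j}\|_q^{q'},\qquad \kappa_j:=\big\|\chi_j L_{\varepsilon_0}^{q'}+A_{\varepsilon_0,\delta}^{q'}\big\|_{q/q'}\le\chi_j+\eta(\delta)^{q'}.
\]
Since $|\chi|_1<1$ is strict and $\eta(\delta)\to0$, I then fix $\delta=\delta(u_0)>0$ so small that $\sum_j\kappa_j\le|\chi|_1+p\,\eta(\delta)^{q'}<1$; Lemma~\ref{DP_standard_rec_argument} applied to the weight vector $(\kappa_1,\dots,\kappa_p)$ now yields geometric decay, i.e.\ exact local analogues of \reff{DP_tightnessarg2}, \reff{DP_tightnessarg1} and \reff{DP_tightnessarg_eq1} with $[0,1]$ replaced by $N_\delta$.

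With these estimates the rest is routine. Following the proof of Theorem~\ref{DP_thm_cont} verbatim on $N_\delta$, Markov's inequality and Borel--Cantelli show that $(X_{n,\cdot}(y_0))_n$ is a.s.\ Cauchy in $(C(N_\delta),|\cdot|_\infty)$, hence converges uniformly on $N_\delta$ to a continuous limit $\hat X^{(u_0)}_0$; since $X_{n,u}(y_0)\to\tilde X_0(u)$ a.s.\ for each fixed $u$ by Proposition~\ref{DP_rek_1_sol}, $\hat X^{(u_0)}_0$ is a continuous modification of $(\tilde X_0(u))_{u\in N_\delta}$, and the telescoping estimate together with monotone convergence gives $\sup_{u\in N_\delta}|\hat X^{(u_0)}_0(u)|\in L^q$. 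Finally I cover the compact interval $[0,1]$ by finitely many such neighbourhoods $N_{\delta_1},\dots,N_{\delta_m}$; on each overlap the corresponding modifications agree a.s.\ (they equal $\tilde X_0(u)$ a.s.\ for every $u$ and are continuous, hence agree on a countable dense subset and therefore everywhere), so they patch into a single continuous modification $\hat X_0$ of $(\tilde X_0(u))_{u\in[0,1]}$ with $\|\sup_{u\in[0,1]}|\hat X_0(u)|\|_q\le\sum_{i=1}^m\|\sup_{u\in N_{\delta_i}}|\hat X_0(u)|\|_q<\infty$. The case $t\ne0$ is identical.

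The only genuinely new ingredient compared with Theorem~\ref{DP_thm_cont} is this localization, and the one point needing care is that the extra term $A_{\varepsilon_0,\delta}$ supplied by the $\partial_1G$-equicontinuity hypothesis is naturally paired with $|y-y'|_1$ rather than with the weighted norm $|y-y'|_{\chi,q'}$. This is what forces the step $(\sum_j V_{n-j})^{q'}\le\sum_j V_{n-j}^{q'}$ (so $q'\le1$ matters) and leads to the perturbed recursion above; the substance is simply that the perturbed recursion still has the form demanded by Lemma~\ref{DP_standard_rec_argument}, with $\chi_j$ replaced by $\kappa_j=\chi_j+O(\eta(\delta)^{q'})$, whose sum can be kept strictly below $1$ precisely because $|\chi|_1<1$ strictly and $\eta(\delta)\to0$. (One also has to be slightly careful that the pointwise-in-$u$ Lipschitz constant $L_{\varepsilon_0}$ at $u_0$ is a random variable that is independent of $J$, which is handled by the same conditioning argument already used in Proposition~\ref{DP_rek_1_sol}.)
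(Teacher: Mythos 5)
Your proof is correct and follows essentially the same route as the paper: both localize around each $u_0$ using the fundamental theorem of calculus together with the equicontinuity hypothesis on $\partial_1 G$, absorb the resulting perturbation into the contraction weights (the paper by rescaling $\chi$ to a new weight vector $\chi'$ with $|\chi'|_1<1$ so that Theorem \ref{DP_thm_cont} applies verbatim on each subinterval, you by carrying it as the additive term $\eta(\delta)^{q'}$ into the recursion and invoking Lemma \ref{DP_standard_rec_argument} with weights $\kappa_j=\chi_j+\eta(\delta)^{q'}$), and then patch the finitely many local continuous modifications over their overlaps. The two ways of handling the perturbation are interchangeable, so there is nothing substantive to add.
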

\begin{proof}[\normalfont \textbf{Proof of Proposition \ref{DP_prop_continuous}}]
	For fixed $u_0 \in [0,1]$, the fundamental theorem of calculus gives
	\begin{eqnarray*}
		&& G_{\varepsilon_0}(y,u) - G_{\varepsilon_0}(y',u)\\
		&=& \int_0^{1} \langle \partial_1 G_{\varepsilon_0}(y' + s\cdot (y-y'),u) - \partial_1 G_{\varepsilon_0}(y' + s\cdot (y-y'),u_0), y-y'\rangle \dif s\\
		&&\quad + \big(G_{\varepsilon_0}(y,u_0) - G_{\varepsilon_0}(y',u_0)\big).
	\end{eqnarray*}
	The first term is bounded in absolute value by $\sup_{x}|\partial_1 G_{\varepsilon_0}(x,u) - \partial_1 G_{\varepsilon_0}(x,u_0)|_1 \cdot |y-y'|_{\infty}$.
	Since $|\chi|_1 < 1$, we can assume w.l.o.g. that $\chi_j > 0$ for all $j = 1,...,p$ (if for instance $\chi_1 = 0$, one can define $\chi' := \chi + (1-|\chi|_1/2,0,...,0)$ which still fulfills $|\chi'|_1 < 1$). Now choose $\beta > 1$ such that $\beta|\chi|_1 < 1$, and define $\chi' := \delta \chi$ for some $\delta > 0$. We have $|y-y'|_{\infty} \le \frac{1}{\min(\chi')}|y-y'|_{\chi',q'}$. For $\delta$ small enough, we have
	\begin{eqnarray*}
		&& \Big\|\sup_{|u-u_0| \le \delta}\sup_{x\not=y}\frac{|G_{\varepsilon_0}(y,u) - G_{\varepsilon_0}(y',u)|}{|y-y'|_{\chi',q'}}\Big\|_{q}^{q'}\\
		&\le& \frac{1}{\min(\chi')^{q'}}\Big\|\sup_{|u-u_0| \le \delta}\sup_x|\partial_1 G_{\varepsilon_0}(x,u) - \partial_1 G_{\varepsilon_0}(x,u_0)|_1\Big\|_{q}^{q'}\\
		&&\quad\quad\quad\quad\quad\quad\quad\quad\quad\quad + \frac{1}{\beta^{q'}}\sup_{|u-u_0| \le \delta}\Big\|\sup_{x\not=y}\frac{|G_{\varepsilon_0}(y,u) - G_{\varepsilon_0}(y',u)|}{|y-y'|_{\chi,q'}}\Big\|_{q}^{q'} < 1.
	\end{eqnarray*}
	Fix $t\in\IZ$. Partitioning of $[0,1]$ into (overlapping) closed intervals $I_1,...,I_K$ of length at most $\delta$ and applying Theorem \ref{DP_thm_cont} on each of these intervals $I_k$, $k = 1,...,K$ provides the existence of a continuous modification of $(\hat X_t^{(k)}(u))_{u \in I_k}$ of $(\tilde X_t(u))_{u\in I_k}$ on each of these subintervals with $\sup_{u\in I_k}|\hat X_t^{(k)}(u)| \in L^{q}$. For fixed $k,k' \in \{1,...,K\}$ with $I_{k} \cap I_{k'} \not= \emptyset$ the continuous processes $(\hat X_t^{(k)}(u))_{u \in I_k}$, $(\hat X_t^{(k')}(u))_{u \in I_{k'}}$ are a.s. equal on $I_{k} \cap I_{k'}$ which ensures continuity of a process $(\hat X_t(u))_{u \in [0,1]}$ which is assembled from $(\hat X_t^{(k)}(u))_{u \in I_k}$, $k = 1,...,K$ and thus a modification of $(\tilde X_t(u))_{u\in[0,1]}$.
\end{proof}

\begin{proof}[\normalfont \textbf{Proof of Theorem \ref{DP_thm_diff}}] (i) Note that Assumption \ref{DP_ass1}\ref{DP_ass1_l2},\ref{DP_ass1_l3} imply \ref{DP_ass1}\ref{DP_ass1_l1} and \reff{DP_cond_3}. We will only use these conditions for the following proof. Since the process $\tilde X_t(u)$ is already known to exist, we are able to define a new recursion function based on $\tilde X_t(u)$. For $y\in\IR^p$, define the random map $\hat G_{t}(y,u) := \langle \partial_1 G_{\varepsilon_t}(\tilde Y_{t-1}(u), u), y\rangle + \partial_2 G_{\varepsilon_t}(\tilde Y_{t-1}(u),u)$ and  $\hat R_{t,u}(y) := (\hat G_{t}(y,u), y_1,...,y_{p-1})$, and let $D_{t,n,u}(y)$ be the first element of $\hat R_{t,u} \circ \hat R_{t-1,u} \circ ... \circ \hat R_{t-n,u}(y)$ for $n\in\IN$. For $y,y'\in\IR^p$, \reff{DP_cond_1} and Fatou's lemma imply
	\begin{align}
		& \|\hat R_{t,u}(y) - \hat R_{t,u}(y')\|_q = \| \langle \partial_1 G_{\varepsilon_t}(\tilde Y_{t-1}(u),u), y-y'\rangle\|_q\nonumber\\
		&\le \liminf_{h\to 0}\frac{\| G_{\varepsilon_t}(\tilde Y_{t-1}(u) + h(y-y'), u) - G_{\varepsilon_t}(\tilde Y_{t-1}(u),u)\|_q}{h}\nonumber\\
		&\le \liminf_{h\to 0}\frac{\| G_{\varepsilon_t}(\tilde Y_{t-1}(u) + h(y-y'), u) - G_{\varepsilon_t}(\tilde Y_{t-1}(u),u)\|_q}{|h(y-y')|_{\chi,q'}}\cdot |y-y'|_{\chi,q'} \le |y-y'|_{\chi,q'}.\label{DP_rek_3_sol_eq1}
	\end{align}
	Similar to the proof of Proposition \ref{DP_rek_1_sol}, we obtain $C_{\lambda}> 0, \lambda_0\in(0,1)$ with
	\[
		\|D_{t,n,u}(y) - D_{t,n,u}(y')\|_q^{q'} \le C_{\lambda}\cdot \lambda_0^{n+1}|y-y'|^{q'}.
	\]
	Applying this to $y = y_0$ and $y' = \hat R_{t-n-1,u}(y_0)$ we obtain
	\[
		\Big\|\sum_{n=0}^{\infty}|D_{t,n,u}(y_0) - D_{t,n+1,u}(y_0)|\Big\|_q^{q'} \le C_{\lambda}\sum_{n=0}^{\infty}\lambda_0^{n+1} \cdot \| |y_0 - \hat R_{t-n-1,u}(y_0)|_1\|_q^{q'}
	\]
	which is finite by \reff{DP_cond_3} and \reff{DP_rek_3_sol_eq1}. This implies that $D_{0,n,u}(y_0)$ converges a.s. to some limit $D_0(u)$, say. Because $\tilde X_k(u) \in \sF_k$ ($k\in\IZ$), it is obvious that $D_0(u)$ is $\sF_0$-measurable and therefore has a representation $D_0(u) = \hat H(u,\sF_0)$. The rest of the proof is the same as in Proposition \ref{DP_rek_1_sol}(i).

(ii) Because of the continuous differentiability of $G$, the process $(X_{n,u}(y_0))_{u\in[0,1]}$ is a random element of $(C^1[0,1],|\cdot|_{C^1})$, where $\|f\|_{C^1} = |f|_{\infty}+ |f'|_{\infty}$ and $|\cdot|_{\infty}$ denotes the supremum norm on $[0,1]$.
	
	We will only consider the case that Assumption \ref{DP_ass1}\ref{DP_ass1_l3}(a) is fulfilled. In the case of Assumption \ref{DP_ass1}\ref{DP_ass1_l3}(b), one can set $\tilde q = q$ in the following with obvious changes in the proofs.
	
	Define $\tilde q := q/2$ and $\tilde q' := \min\{\tilde q, 1\}$. Let $u \mapsto y_1(u),y_2(u) \in \IR^p$ be two differentiable functions (for brevity, we will omit the argument $u$ in the following). Because of $X_{n,u}(y) = G_{\varepsilon_0,u}(J_{n-1,u}(y))$, we have:
	\begin{equation*}
		\partial_u X_{n,u}(y_1) = \langle \partial_1 G_{\varepsilon_0}(J_{n-1,u}(y_1),u), \partial_u J_{n-1,u}(y_1)\rangle + \partial_2 G_{\varepsilon_0}(J_{n-1,u}(y_1),u).\label{DP_thm_diff_eq3}
	\end{equation*}
	This shows (use similar techniques as in \reff{DP_rek_3_sol_eq1}):
	\begin{align}
		& \big\|\sup_{u\in[0,1]}|\partial_u X_{n,u}(y_1)|\big\|_q^{q'}\nonumber\\
		&\le \sum_{j=1}^{p}\chi_j \big\|\sup_u |\partial_u X_{n-j,u}(y_1)|\big\|_q^{q'} + \big\| \sup_{u}|\partial_2 G_{\varepsilon_0,u}(J_{n-1,u}(y_1))|\big\|_q^{q'}\nonumber\\
		&\le  \sum_{j=1}^{p}\chi_j \big\|\sup_u |\partial_u X_{n-j,u}(y_1)|\big\|_q^{q'} + C_2^{q'} \sum_{j=1}^{p}\big\|\sup_u|X_{n-j,u}(y_1)|\big\|_q^{q'} + \big\|\sup_u |\partial_2 G_{\varepsilon_0}(0,u)|\big\|_q^{q'}.\label{DP_thm_diff_eq100}
	\end{align}
	The third term is finite by assumption (follows from \reff{DP_cond_9} and $\|\sup_{u}|\partial_2 G_{\varepsilon_0}(y_0,u)| \|_{q} < \infty$). In the proof of Theorem \ref{DP_thm_cont} it was shown that there exist $C_{\lambda}' > 0$, $0 \le \lambda_0' < 1$ such that for all $n\in\IN$: $\|\sup_u|X_{n,u}(y_1)|\|_q^{q'} \le D(y_1)^{q'}$ with
	\[
		D(y_1)^{q'} := \frac{C_{\lambda}'\lambda_0'}{1-\lambda_0'}\|\sup_{u\in [0,1]}|y_1 - R_{\varepsilon_0}(y_1,u)|_1 \|_q^{q'} + \sup_{u\in[0,1]} |y_1|_1^{q'},
	\]
	see \reff{DP_tightnessarg_eq1}. Since $|\chi|_1 < 1$, Lemma \ref{DP_standard_rec_argument} and \reff{DP_thm_diff_eq100} imply that there exist $C_{\lambda} > 0$, $0 \le \lambda_0 < 1$ such that for all $n\in\IN$:
	\begin{align}
		& \big\|\sup_{u\in[0,1]}|\partial_u X_{n,u}(y_1)|\big\|_q^{q'}\nonumber\\
		& \le C_{\lambda}\big(\sup_{u}|\partial_u y_1|_1^{q'} + (1-\lambda_0)^{-1}\big(C_2^{q'} p D(y_1)^{q'} +  \big\|\sup_u |\partial_2 G_{\varepsilon_0}(0,u)|\big\|_q^{q'}\big) =: E(y_1)^{q'}.\label{DP_thm_diff_eq5}
	\end{align}
	Using the triangle inequality, we obtain
	\begin{eqnarray*}
		&& \big\|\sup_{u\in[0,1]}\big|\partial_u X_{n,u}(y_1)- \partial_u X_{n,u}(y_2)\big|\big\|_{\tilde q}^{\tilde q'}\\
		&\le& \big\| \sup_{u \in [0,1]}\big|\langle \partial_1 G_{\varepsilon_0,u}(J_{n-1,u}(y_1),u) - \partial_1 G_{\varepsilon_0,u}(J_{n-1,u}(y_2),u), \partial_u J_{n-1,u}(y_1)\rangle\big|\big\|_{\tilde q}^{\tilde q'}\\
		&&\quad + \big\| \sup_{u\in[0,1]}\big|\langle \partial_1 G_{\varepsilon_0,u}(J_{n-1,u}(y_2),u), \partial_u J_{n-1,u}(y_1) - \partial_u J_{n-1,u}(y_2)\rangle\big|\big\|_{\tilde q}^{\tilde q'}\\
		&&\quad + \big\| \sup_{u\in[0,1]}\big| \partial_2 G_{\varepsilon_0,u}(J_{n-1,u}(y_1),u) - \partial_2 G_{\varepsilon_0,u}(J_{n-1,u}(y_2),u)\big|\big\|_{\tilde q}^{\tilde q'} =: A_{1} + A_2 + A_3.
	\end{eqnarray*}
	Condition \reff{DP_cond_9} and the result \reff{DP_tightnessarg2} from the proof of Theorem \ref{DP_thm_cont} (use $\tilde C_{\lambda}$, $\tilde \lambda_0$ for the result therein) implies
	\begin{eqnarray*}
		A_3 &\le& C_2^{\tilde q'} \cdot \big\| \sup_{u\in[0,1]}|J_{n-1,u}(y_1) - J_{n-1,u}(y_2)|_1\big\|_{\tilde q}^{\tilde q'}\\
		&\le& C_2^{\tilde q'} \cdot \Big(\sum_{j=1}^{p}\big\|\sup_{u\in[0,1]}|X_{n-j,u}(y_1) - X_{n-j,u}(y_2)|\big\|_{q}^{q'}\Big)^{\tilde q'/q'}\\
		&\le& C_2^{\tilde q'} \big(\tilde C_{\lambda} p\tilde\lambda_0^{n-p}\big)^{\tilde q'/q'}\sup_u |y_1-y_2|_1^{\tilde q'}.
	\end{eqnarray*}
	Using Assumption \ref{DP_ass1}\ref{DP_ass1_l3}(a), a similar technique as in \reff{DP_rek_3_sol_eq1} gives
	\[
		A_2 \le \sum_{j=1}^{p}\chi_j \big\|\sup_{u\in[0,1]}|\partial_u X_{n-j,u}(y_1) - \partial_u X_{n-j,u}(y_2)|\big\|_{\tilde q}^{\tilde q'}.
	\]
	By the Cauchy-Schwarz inequality, we have
	\begin{eqnarray*}
		A_{1} &\le& \sum_{j=1}^{p}\big\|\sup_{u\in[0,1]}\big|\big( \partial_1 G_{\varepsilon_0}(J_{n-1,u}(y_1),u) - \partial_1 G_{\varepsilon_0}(J_{n-1,u}(y_2),u)\big)_j\big|\big\|_q^{\tilde q'}\\
		&&\quad\quad\times \big\| \big|\partial_u J_{n-1,u}(y_1)_j\big|\big\|_{q}^{\tilde q'}\\
		&\le&  C_1^{\tilde q'} \sum_{j=1}^{p}\big\|\sup_{u\in[0,1]}\big| J_{n-1,u}(y_1) - J_{n-1,u}(y_2)\big|_1\big\|_q^{\tilde q'} \cdot \big\| \big|\partial_u J_{n-1,u}(y_1)_j\big|\big\|_{q}^{\tilde q'}\\
		&\le& C_1^{\tilde q'} \sum_{j=1}^{p}\Big(\sum_{i=1}^{p}\big\|\sup_{u\in[0,1]}\big|X_{n-i,u}(y_1) - X_{n-i,u}(y_2)\big|\big\|_{q}^{q'}\Big)^{\tilde q'/q'}\\
		&&\quad\quad \times \big\|\sup_{u\in[0,1]}\big|\partial_u X_{n-j,u}(y_1)\big|\big\|_{q}^{\tilde q'}\\
		&\le& C_1^{\tilde q'} p E(y_1)^{\tilde q'}\big(\tilde C_{\lambda} p \lambda_0^{n-p}\big)^{\tilde q'/q'}\sup_u|y_1-y_2|_1^{\tilde q'}
	\end{eqnarray*}
	Finally we have shown that exists a constant $C(y_1) > 0$ such that
	\begin{eqnarray*}
		&& \Big\|\sup_{u\in[0,1]}\big|\partial_u X_{n,u}(y_2)- \partial_u X_{n,u}(y_1)\big|\Big\|_{\tilde q}^{\tilde q'}\\
		&\le& \sum_{j=1}^{p}\chi_j \Big\|\sup_{u\in[0,1]}\big|\partial_u X_{n-j,u}(y_2)- \partial_u X_{n-j,u}(y_1)\big|\Big\|_{\tilde q}^{\tilde q'} + C(y_1) \big(\tilde\lambda_0^{\tilde q'/q'}\big){n} \sup_u|y_1-y_2|_1^{\tilde q'}.
	\end{eqnarray*}
	Lemma \ref{DP_standard_rec_argument} implies that there exist constants $C_{\lambda} > 0$, $\lambda_0 \in (0,1)$ such that for $n\in\IN$:
	\begin{eqnarray*}
		 && \Big\|\sup_{u\in[0,1]}\big|\partial_u X_{n,u}(y_1)- \partial_u X_{n,u}(y_2)\big|\Big\|_{\tilde q}^{\tilde q'}\\
		 &\le& C_{\lambda}\big(\lambda_0^{n+1}\sup_u|\partial_u y_1 - \partial_u y_2|_1^{\tilde q'} + C(y_1)\sum_{i=0}^{n}\lambda_0^{i} \big(\tilde \lambda_0^{\tilde q'/q'}\big)^{n-i}\big)\sup_u|y_1-y_2|_1^{\tilde q'}.
	\end{eqnarray*}
	Taking $y_1(u) \equiv y_0$, $y_2(u) = R_{\varepsilon_0}(y_0,u)$ and using the inequalities
	\[
		\|\sup_u|\partial_u y_1 - \partial_u y_2|_1\|_{\tilde q} \le \|\sup_u|\partial_2 G_{\varepsilon_0}(y_0,u)|\|_{q} < \infty
	\]
	and $\|\sup_u|y_1 - y_2|_1\|_{\tilde q} \le \|\sup_u |y_0 - R_{\varepsilon_0}(y_0,u)|_1\|_{q} < \infty$ by assumption, we obtain that for all $n\in\IN$:
	\begin{equation}
			\big\|\sup_{u\in[0,1]}|\partial_u X_{n+1,u}(y_0)- \partial_u X_{n,u}(y_0)|\big\|_{\tilde q}^{\tilde q} \le \hat C_{\lambda}(y_0) \hat \lambda_0^{n}\label{DP_thm_diff_eq1}
	\end{equation}
	with $0 < \hat \lambda_0 := \max(\lambda_0, \tilde \lambda_0^{\tilde q'/q'}) < 1$ and some constant $\hat C_{\lambda}(y_0) > 0$. Together with the result \reff{DP_tightnessarg1}, Markov's inequality and Borel-Cantelli's lemma, we obtain that the sequence $(X_{n,u}(y_0))_{u\in[0,1]}$, $n\in\IN$ of elements of $C^1[0,1]$ is a Cauchy sequence in $(C^1[0,1],|\cdot|_{C^1})$ almost surely. Since this space is complete, there exists a continuously differentiable limit $\hat X_0 = (\hat X_0(u))_{u\in[0,1]}$. Because $\hat X_0$ is $\sF_0$-measurable, there exists a measurable function $\hat H = (\hat H(u,\cdot))_{u\in [0,1]}:\IR^{\IN} \to C^{1}[0,1]$ such that $u \mapsto \hat H(u,z)$ is continuously differentiable for all $z \in \IR^{\IN}$. For arbitrary $t\in\IZ$, we may define $\partial_u  \hat X_t(u) := \partial_u \hat H(u,\sF_t)$. The process $X_{t,n,u}(y)$ defined similarly as $X_{n,u}(y)$ but with $\varepsilon_{0},...,\varepsilon_{-n}$ replaced by $\varepsilon_t,...,\varepsilon_{t-n}$ has the same distributional properties as $X_{n,u}(y)$ and therefore $X_{t,n,u}(y) \to \hat H(u,\sF_t)$ a.s. and $\partial_u X_{t,n,u}(y) \to \partial_u \hat H(u,\sF_t)$ a.s. By construction it holds that
	\[
		X_{t,n,u}(y) = G_{\varepsilon_t}(X_{t-1,n-1,u}(y),u)
	\]
	and
	\[
		\partial_u X_{t,n,u}(y) = \langle \partial_1 G_{\varepsilon_t}(X_{t-1,n-1,u}(y),u), \partial_u X_{t-1,n-1,u}(y)\rangle + \partial_2 G_{\varepsilon_t}(X_{t-1,n-1,u}(y),u),
	\]
	thus we obtain for $n\to\infty$ that $\hat X_t(u)$ fulfills \reff{DP_rek_2} and $\partial_u \hat X_t(u)$ fulfills \reff{DP_rek_3} a.s. for all $t\in\IZ$. Since \reff{DP_rek_2}, \reff{DP_rek_3} only allow for a.s. unique solutions, we conclude that $(\hat X_t(u))_{u\in[0,1]}$ is a continuously differentiable modification of $(\tilde X_t(u))_{u\in[0,1]}$ and $(\partial_u \hat X_t(u))_{u\in[0,1]}$ is a continuous modification of $(D_t(u))_{u\in[0,1]}$.\\
	The uniform convergence $\sup_{u}|\partial_u X_{n,u}(y_0) - \partial_u \hat X_0(u)| \to 0$ together with Fatou's lemma and \reff{DP_thm_diff_eq5} implies $\sup_u |\partial_u \hat X_0(u)| \in L^{q}$.
\end{proof}

\begin{proof}[\normalfont \textbf{Proof of Lemma \ref{DP_hoelder_derivative}}]
	Define $\tilde q := q/2$ and $\tilde q' := \min\{\tilde q,1\}$. Let $u,u' \in [0,1]$. Because $\partial_u \tilde X_t(u)$ obeys \reff{DP_rek_3}, we have by the Cauchy Schwarz inequality:
	\begin{eqnarray}
		&& \|\partial_u \tilde X_t(u) - \partial_u \tilde X_{t}(u')\|_{\tilde q}^{\tilde q'}\nonumber\\
		&\le& \sum_{j=1}^{p}\big\|\big(\partial_1 G_{\varepsilon_t}(\tilde Y_{t-1}(u),u) - \partial_1 G_{\varepsilon_t}(\tilde Y_{t-1}(u'),u')\big)_j\big\|_{q}^{\tilde q'}\cdot \|\partial_u \tilde X_{t-j}(u)\|_{q}^{\tilde q'}\nonumber\\
		&& + \big\|\langle\partial_{1} G_{\varepsilon_t}(\tilde Y_{t-1}(u'),u'), \partial_{u}\tilde X_{t-1}(u) - \partial_{u}\tilde X_{t-1}(u')\rangle \big\|_{\tilde q}^{\tilde q'}\nonumber\\
		&& +  \|\partial_2 G_{\varepsilon_t}(\tilde Y_{t-1}(u),u) - \partial_2 G_{\varepsilon_t}(\tilde Y_{t-1}(u'),u')\|_{\tilde q}^{\tilde q'}.\label{DP_hoelder_derivative_eq1}
	\end{eqnarray}
	\reff{DP_cond_9} and \reff{DP_cond_20} give
	\[
		\|\partial_2 G_{\varepsilon_t}(\tilde Y_{t-1}(u),u) - \partial_2 G_{\varepsilon_t}(\tilde Y_{t-1}(u'),u')\|_{\tilde q}^{\tilde q'} \le C_2^{\tilde q'}p^{\tilde q'/q'}\cdot \|\tilde X_t(u) - \tilde X_t(u')\|_q^{\tilde q'} + D_2^{\tilde q'}|u-u'|^{\alpha_2 \tilde q'}.	
	\]
	Similar results are obtained for the first term in \reff{DP_hoelder_derivative_eq1}. Note that $\|\sup_{u}|\partial_u \tilde X_t(u)|\|_q \le M$ with some $M > 0$ by Theorem \ref{DP_thm_diff}. The conditions of Lemma \ref{DP_lemma_1} are fulfilled for $\alpha = 1$, alternatively it can be seen directly that
	\[
		\|\tilde X_t(u) - \tilde X_t(u')\|_q = \Big\|\int_0^{1}|\partial_u \tilde X_t(u' + (u-u')s) \dif s\Big\|_q |u-u'| \le \big\| \sup_v |\partial_u \tilde X_t(v)|\big\|_q |u-u'|.
	\]
	A similar technique as in \reff{DP_rek_3_sol_eq1} applied to the second summand in \reff{DP_hoelder_derivative_eq1} implies the inequality $\big\|\langle\partial_{1} G_{\varepsilon_t}(\tilde Y_{t-1}(u'),u'), \partial_{u}\tilde X_{t-1}(u) - \partial_{u}\tilde X_{t-1}(u')\rangle \big\|_{\tilde q}^{\tilde q'} \le  |\chi|_1 \|\partial_u \tilde X_t(u) - \partial_u \tilde X_t(u')\|_{\tilde q}^{\tilde q'}$. We finally obtain 
	\begin{eqnarray*}
		\|\partial_u \tilde X_t(u) - \partial_u \tilde X_t(u')\|_{\tilde q}^{\tilde q'} &\le& |\chi|_1 \|\partial_u \tilde X_t(u) - \partial_u \tilde X_t(u')\|_{\tilde q}^{\tilde q'}\\
		&&\quad + p M^{\tilde q'}\big(C_1^{\tilde q'}p^{\tilde q'/q'}\cdot M^{\tilde q'} |u-u'|^{\tilde q'} + D_1^{\tilde q'}|u-u'|^{\alpha_2 \tilde q'}\big)\\
		&&\quad + \big(C_2^{\tilde q'}p^{\tilde q'/q'}\cdot M^{\tilde q'} |u-u'|^{\tilde q'} + D_2^{\tilde q'}|u-u'|^{\alpha_2 \tilde q'}\big),
	\end{eqnarray*}
	which gives the result since $|\chi|_1 < 1$.
\end{proof}

\subsection{Proofs of Section \ref{DP_section4}}

\begin{proof}[\normalfont \textbf{Proof of Theorem \ref{DP_mle_consistency}, uniform convergence of $\hat \theta_b$}]
Since a sequence converges in probability to some random variable $Z$ if each subsequence has a further subsequence that converges almost surely towards $Z$, we may assume w.l.o.g. that
\begin{equation}
	\sup_{u\in[\frac{b}{2},1-\frac{b}{2}]}\sup_{\theta \in \Theta}|L_{n,b}(u,\theta) - L(u,\theta)|\to 0 \quad a.s.\label{DP_mle_consistency_eq2}
\end{equation}
Since $\theta_0$ is continuous and $\theta_0(u) \in \mbox{int}(\Theta)$ for all $u\in[0,1]$, the whole curve $\theta_0$ has a positive $|\cdot|_1$-distance $c_{min} := \inf_{u\in [0,1]}\mbox{dist}(\theta_0(u),\partial \Theta) > 0$  to the boundary $\partial \Theta$ of $\Theta$. Choose $\varepsilon \in (0, c_{min})$ arbitrarily. For each $u \in D_u = D_u(n) :=  [\frac{b}{2},1-\frac{b}{2}]$, define $\Theta(u, \varepsilon) := \{\theta \in \Theta: |\theta - \theta_0(u)|_1 < \varepsilon\} \not=\emptyset$ (nonempty since $\theta_0(u)$ is in the interior of $\Theta$ by assumption). Define
	\[
		\theta^{*}(u) :\in \argmin_{\theta \in \Theta \cap \Theta(u,\varepsilon)^{c}}L(u,\theta).
	\]
	Here, $\theta^{*}(u)$ does not need to be unique, but we choose one of the possible values. Because $\Theta \cap \Theta(u,\varepsilon)^{c}$ is compact, there has to exist at least one. Because $\theta_0(u)$ is the unique minimum of $\theta \mapsto L(u,\theta)$ over $\Theta$, there exists $\delta(u) > 0$ such that
	\[
		L(u, \theta^{*}(u)) - L(u, \theta_0(u)) = \delta(u).
	\]
	It holds that $\delta := \inf_{u \in [0,1]}\delta(u) > 0$. Otherwise, because of the compactness of $[0,1]$, there would exist a sequence $(u_n) \subset [0,1]$ with $u_n \to u^{*} \in [0,1]$ and $\delta(u_n) \to 0$. By the continuity of $L$, $\theta_0$ and $u \mapsto \inf_{\theta \in \Theta \cap \Theta(u,\varepsilon)^c}L(u,\theta)$ (use Berge's Maximum theorem and the fact that $u \mapsto \Theta \cap \Theta(u, \varepsilon)^c$ is a continuous set function) this would imply
	\[
		0 \leftarrow \delta(u_n) = \inf_{\theta \in \Theta \cap \Theta(u_n, \varepsilon)^c}L(u_n, \theta) - L(u_n, \theta_0(u_n))  \to  \inf_{\theta \in \Theta \cap \Theta(u^{*},\varepsilon)^c}L(u^{*}, \theta) - L(u^{*}, \theta_0(u^{*})),
	\]
	which is a contradiction to the fact that $\theta_0(u^{*}) \in \Theta(u^{*},\delta)$ is the unique minimum of $L(u^{*}, \theta)$. By  \reff{DP_mle_consistency_eq2}, we may choose $N\in\IN$ such that for all $n\ge N$, $\sup_{u \in D_u} \sup_{\theta \in \Theta} |L_{n,b}(u,\theta) - L(u,\theta)| < \frac{\delta}{2}$. Now suppose that for some $n \ge N$, $\sup_{u \in D_u} |\hat \theta_b(u) - \theta_0(u)|_1 \ge \varepsilon$.
	Then we have for some $u \in D_u$ that
	\begin{eqnarray*}
		L_{n,b}(u, \hat \theta_b(u)) &>& L(u, \hat \theta_b(u)) - \frac{\delta}{2} \ge L(u, \theta^{*}(u)) - \frac{\delta}{2}\\
		&=& L(u, \theta_0(u)) + \delta(u) - \frac{\delta}{2} \ge L(u, \theta_0(u)) + \frac{\delta}{2} > L_{n,b}(u, \theta_0(u)),
	\end{eqnarray*}
	which is a contradiction to the extremal property of $\hat \theta_b(u)$.
\end{proof}

\section{Supplement B}
\label{suppB}
This supplement contains another counterexample where Assumption \ref{DP_ass_general2}(M1) is satisfied but not (M2).\\

Let $X_{t,n} = \sum_{k=1}^{\infty}a_{t,n}(k) \varepsilon_{t-k}$ be a linear process and $\tilde X_t(u) = \sum_{k=1}^{\infty}a(u,k) \varepsilon_{t-k}$ the corresponding stationary approximation, where $\varepsilon_i$ are i.i.d., $\alpha > 0$, $a_{t,n}(k) = \frac{1 + \frac{k}{n}}{k^{2+\alpha} + t/n}$ and $a(u,k) = \frac{1}{k^{2+\alpha} + u}$. Then we have
	\begin{eqnarray}
		|a_{t,n}(k) - a(t/n,k)| &\le& n^{-1}\cdot \frac{1}{k^{1+\alpha}},\label{a1}\\
		|a(u,k) - a(u',k)| \le \frac{|u-u'|}{(k^{2+\alpha}+u')(k^{2+\alpha}+u)} &\le& |u-u'|\cdot k^{-4-2\alpha}\label{a2}
	\end{eqnarray}
	which ensures $\|X_{t,n} - \tilde X_t(t/n)\|_q \le n^{-1}\|\varepsilon_0\|_q \sum_{k=1}^{\infty}k^{-2}$ and $\|\tilde X_{t}(u) - \tilde X_t(u')\|_q \le |u-u'| \|\varepsilon_0\|_q \sum_{k=1}^{\infty}k^{-6}$ for $q \ge 1$. However, the processes show different behavior for the dependence measure,
	\begin{eqnarray*}
		&& \delta_{q}^{X_{\cdot,n}}(k) = |a_{t,n}(k)|\cdot \|\varepsilon_0\|_q \sim k^{-2-\alpha} + n^{-1}k^{-1-\alpha},\\
		&\text{while}& \delta_q^{\tilde X(u)}(k) = |a(u,k)| \cdot \|\varepsilon_0\|_q \sim k^{-2-\alpha}.
	\end{eqnarray*}
	If we choose more specifically $\varepsilon_0 \iid N(0,1)$ and $\alpha = 0$, then clearly $\|\tilde X_t(u)\|_2 < \infty$ exists and for $X_{t,n}$ we have
	\[
		X_{t,n} \sim N\Big(0, \sum_{k=1}^{\infty}\Big(\frac{1 + \frac{k}{n}}{k^{2} + t/n}\Big)^2\Big), \quad\quad X_{t,n} - \tilde X_t(t/n) \sim N\Big(0, n^{-2}\sum_{k=1}^{\infty}\Big(\frac{k}{k^{2}+t/n}\Big)^2\Big)
	\]
	which shows $\sup_{t,n}\|X_{t,n}\|_2^2 < \infty$ and $\|X_{t,n} - \tilde X_t(t/n)\|_2 \le n^{-1}\big(\sum_{k=1}^{\infty}k^{-2}\big)^{1/2}$, but
	\begin{eqnarray*}
		&& \delta_{2}^{X_{\cdot,n}}(k) = |a_{t,n}(k)|\cdot \|\varepsilon_0\|_q \sim k^{-2} + n^{-1}k^{-1},\\
		&\text{while}& \delta_q^{\tilde X(u)}(k) = |a(u,k)| \cdot \|\varepsilon_0\|_2 \sim k^{-2},
	\end{eqnarray*}
	i.e. $\sum_{k=1}^{\infty}\delta_{2}^{X_{\cdot,n}}(k) = \infty$ but $\sum_{k=1}^{\infty}\delta_{2}^{\tilde X(u)}(k) < \infty.$


\begin{thebibliography}{}

\bibitem[\protect\citeauthoryear{Billingsley, P.}{2013}]{billingsley} Billingsley, P. (2013). \emph{Convergence of probability measures}. John Wiley \& Sons.
\bibitem[\protect\citeauthoryear{Burkholder, D. L.}{1988}]{burkholder1988} Burkholder, D. L. (1988). Sharp inequalities for martingales and stochastic integrals. \emph{Ast\'erisque}, (157-58), 75-94.
\bibitem[\protect\citeauthoryear{Chen, X., Xu, M. and Wu, W. B.}{2013}]{chen2013} Chen, X., Xu, M. and Wu, W. B. (2013). Covariance and precision matrix estimation for high-dimensional time series. \emph{The Annals of Statistics}, 41(6), 2994-3021.
\bibitem[\protect\citeauthoryear{Dahlhaus, R.}{1997}]{dahlhaus1997} Dahlhaus, R. (1997). Fitting time series models to nonstationary processes. \emph{The Annals of Statistics} 25(1), 1-37.
\bibitem[\protect\citeauthoryear{Dahlhaus, R.}{2000}]{dahlhaus2000} Dahlhaus, R. (2000). A likelihood approximation for locally stationary processes. \emph{The Annals of Statistics} 28(6), 1762–1794.
\bibitem[\protect\citeauthoryear{Dahlhaus, R., and Giraitis, L.}{1998}]{dahlhausgiraitis1998} Dahlhaus, R., and Giraitis, L. (1998). On the optimal segment length for parameter estimates for locally stationary time series. \emph{Journal of Time Series Analysis} 19(6), 629-655.
\bibitem[\protect\citeauthoryear{Dahlhaus, R.}{2012}]{dahlhaus2012} Dahlhaus, R. (2012). \emph{Locally Stationary Processes}, Handbook of Statistics 30, 351-412, North-Holland, Amsterdam.
\bibitem[\protect\citeauthoryear{Dahlhaus, R., and Subba Rao, S.}{2006}]{suhasini2006} Dahlhaus, R., and Subba Rao, S. (2006). Statistical inference for time-varying ARCH processes. \emph{The Annals of Statistics} 34(3), 1075-1114.
\bibitem[\protect\citeauthoryear{Dahlhaus, R., and  Polonik, W.}{2009}]{dahlhauspolonik2009} Dahlhaus, R., and  Polonik, W. (2009). Empirical spectral processes for locally stationary time series. \emph{Bernoulli} 15(1), 2009, 1-39.
\bibitem[\protect\citeauthoryear{Duflo, M.}{1997}]{duflo1997} Duflo, M. (1997). \emph{Random Iterative Models}. Springer Verlag, Berlin.
\bibitem[\protect\citeauthoryear{Durrett, R.}{2010}]{durret} Durrett, R. (2010). \emph{Probability: theory and examples}. Cambridge university press.
\bibitem[\protect\citeauthoryear{Eichler, M., Motta, G., and Von Sachs, R.}{2011}]{eichlerEtAl2011} Eichler, M., Motta, G., and Von Sachs, R. (2011). Fitting dynamic factor models to non-stationary time series. \emph{Journal of Econometrics} 163(1), 51-70.
\bibitem[\protect\citeauthoryear{Jones, D. A.}{1978}]{jones1978} Jones, D. A. (1978). Nonlinear autoregressive processes. \emph{Proceedings of the Royal Society of London A: Mathematical, Physical and Engineering Sciences} Vol. 360, No. 1700, pp. 71-95. The Royal Society.
\bibitem[\protect\citeauthoryear{Koo, B., and Linton, O.}{2012}]{KooLinton2012} Koo, B., and Linton, O. (2012). Estimation of semiparametric locally stationary diffusion models. \emph{Journal of Econometrics} 170(1), 210-233.
\bibitem[\protect\citeauthoryear{Kreiss, J.P., and Paparoditis, E.}{2015}]{kreisspaparoditis2015} Kreiss, J.P., and Paparoditis, E. (2015). Bootstrapping locally stationary processes. \emph{Journal of the Royal Statistical Society: Series B (Statistical Methodology)} 77(1), 267-290.
\bibitem[\protect\citeauthoryear{Liu, W., Xiao, H., and Wu, W. B.}{2013}]{wu2013} Liu, W., Xiao, H., and Wu, W. B. (2013). Probability and moment inequalities under dependence. \emph{Statistica Sinica} 23, 1257-1272.
\bibitem[\protect\citeauthoryear{Martin, W. and Flandrin, P.}{1985}]{martin1985} Martin, W. and Flandrin, P. (1985). Wigner-Ville spectral analysis of nonstationary processes. \emph{IEEE Transactions on Acoustics, Speech, and Signal Processing}, 33(6), 1461-1470.
\bibitem[\protect\citeauthoryear{Motta, G., Hafner, C. M., and von Sachs, R.}{2011}]{mottaEtAl2011} Motta, G., Hafner, C. M., and von Sachs, R. (2011). Locally stationary factor models: Identification and nonparametric estimation. \emph{Econometric Theory} 27(6), 1279-1319.
\bibitem[\protect\citeauthoryear{Palma, W., and Olea, R.}{2010}]{palmaolea2010} Palma, W., and Olea, R. (2010). An efficient estimator for locally stationary Gaussian long-memory processes. \emph{The Annals of Statistics} 38(5), 2958-2997.
\bibitem[\protect\citeauthoryear{Preuss, P., Vetter, M., and Dette, H.}{2013}]{preussEtAl2013} Preuss, P., Vetter, M., and Dette, H. (2013) A test of stationarity based on empirical processes. \emph{Bernoulli} 19, 2153–2179.
\bibitem[\protect\citeauthoryear{Rio, E.}{2009}]{rio2009} Rio, E. (2009). Moment inequalities for sums of dependent random variables under projective conditions. \emph{Journal of Theoretical Probability} 22, 146-163.
\bibitem[\protect\citeauthoryear{Roueff, F., and Von Sachs, R.}{2011}]{roueffvonsachs2011} Roueff, F., and Von Sachs, R. (2011). Locally stationary long memory estimation. \emph{Stochastic Processes and their Applications} 121(4), 813-844.
\bibitem[\protect\citeauthoryear{Sedro, J.}{2017}]{fixedpoint2017} Sedro, J. (2017). A regularity result for fixed points, with applications to linear response. arXiv:1705.04078.
\bibitem[\protect\citeauthoryear{Sergides, M., and Paparoditis, E.}{2008}]{sergidespaparoditis2008} Sergides, M., and Paparoditis, E. (2008) Bootstrapping the local periodogram of locally stationary processes. \emph{J. Time Ser. Anal.} 29, 264–299; correction, 30 (2009), 260–261.
\bibitem[\protect\citeauthoryear{Sergides, M., and Paparoditis, E.}{2009}]{sergidespaparoditis2009} Sergides, M., and Paparoditis, E. (2009). Frequency domain tests of semiparametric hypotheses for locally stationary processes. \emph{Scandinavian Journal of Statistics} 36(4), 800-821.
\bibitem[\protect\citeauthoryear{Shao, X., and Wu, W.B.}{2007}]{shaowu2007} Shao, X., and Wu, W.B. (2007). Asymptotic spectral theory for nonlinear time series. \emph{The Annals of Statistics} 35(4), 1773-1801.
\bibitem[\protect\citeauthoryear{Subba Rao, S.}{2006}]{suhasini2006b}  Subba Rao, S. (2006). On some nonstationary, nonlinear random processes and their stationary approximations. \emph{Advances in Applied Probability} 38(4), 1155-1172.
\bibitem[\protect\citeauthoryear{Truquet, L.}{2016}]{truquet2016} Truquet, L. (2016). Local stationarity and time-inhomogeneous Markov chains. arXiv:1610.01290.
\bibitem[\protect\citeauthoryear{Van der Vaart, A.W.}{1998}]{vandervaart1998} Van der Vaart, A.W. (1998). \emph{Asymptotic Statistics}, Cambridge University Press.
\bibitem[\protect\citeauthoryear{Vogt, M.}{2012}]{vogt2012} Vogt, M. (2012). Nonparametric regression for locally stationary time series. \emph{The Annals of Statistics} 40(5), 2601-2633.
\bibitem[\protect\citeauthoryear{Witting, H. and  M\"uller-Funk, U.}{1995}]{witting1995} Witting, H. and  M\"uller-Funk, U. (1995). \emph{Mathematische Statistik II: Asymptotische Statistik: Parametrische Modelle und nichtparametrische Funktionale.} Teubner.
\bibitem[\protect\citeauthoryear{Wu, W.B.}{2005}]{wu2005} Wu, W.B. (2005). Nonlinear system theory: Another look at dependence, \emph{PNAS} 102(40), 14150-14154.
\bibitem[\protect\citeauthoryear{Wu, W.B.}{2007}]{wu2007} Wu, W. B. (2007). M-estimation of linear models with dependent errors. \emph{The Annals of Statistics}, 35(2), 495-521.
\bibitem[\protect\citeauthoryear{Wu, W.B.}{2008}]{wu2008} Wu, W. B. (2008). Empirical processes of stationary sequences. \emph{Statistica Sinica}, 18(1), 313-333.
\bibitem[\protect\citeauthoryear{Wu, W.B.}{2011}]{wu2011} Wu, W.B. (2011). Asymptotic theory for stationary processes. \emph{Statistics and its Interface} 4(2), 207-226.
\bibitem[\protect\citeauthoryear{Wu, W.B., and Shao, X.}{2004}]{wu2004} Wu, W.B., and Shao, X. (2004). Limit theorems for iterated random functions.  \emph{Journal of Applied Probability} 41(2), 425-436.
\bibitem[\protect\citeauthoryear{Wu, W.B., and Zhou, Z.}{2011}]{WuAndZhou2011} Wu, W.B., and Zhou, Z. (2011).
    Gaussian Approximations for Non-stationary Multiple Time Series.
    \emph{Statistica Sinica} 21, 1397-1413.
\bibitem[\protect\citeauthoryear{Wu, Weichi, and Zhou, Z.}{2017}]{WeichiWuAndZhou2017} Wu, Weichi, and Zhou, Z. (2017) Nonparametric inference for time-varying coefficient quantile regression. \emph{Journal of Business \& Economic Statistics} 35(1), doi: 10.1080/07350015.2015.1060884.
\bibitem[\protect\citeauthoryear{Zhou, Z.}{2014a}]{Zhou2014a} Zhou, Z. (2014a). Inference of weighted V-statistics for non-stationary time series and its applications. \emph{The Annals of Statistics} 42, 87-114.
\bibitem[\protect\citeauthoryear{Zhou, Z.}{2014b}]{Zhou2014b} Zhou, Z. (2014b). Nonparametric specification for non-stationary time series regression. \emph{Bernoulli} 20, 78-108.
\bibitem[\protect\citeauthoryear{Zhou, Z., and Wu, W.B.}{2009}]{zhouwu2009} Zhou, Z., and Wu, W.B. (2009). Local linear quantile estimation for nonstationary time series. \emph{The Annals of Statistics} 37(5), 2696-2729.


\end{thebibliography}
\end{document}